\newtheorem{theorem}{Theorem}[section]
\newtheorem{proposition}[theorem]{Proposition}
\newtheorem{conjecture}[theorem]{Conjecture}
\theoremstyle{definition}
\newtheorem{definition}[theorem]{Definition}
\newtheorem{example}[theorem]{Example}
\newtheorem{remark}[theorem]{Remark}
\numberwithin{equation}{section}
\renewcommand{\phi}{\varphi}
\newcommand{\ep}{\varepsilon}
\newcommand{\A}{\operatorname{A}}
\renewcommand{\S}{\operatorname{S}}
\newcommand{\Aff}{\operatorname{Aff}}
\newcommand{\Coker}{\operatorname{Coker}}
\newcommand{\Homeo}{\operatorname{Homeo}}
\newcommand{\id}{\operatorname{id}}
\newcommand{\Ima}{\operatorname{Im}}
\newcommand{\Inf}{\operatorname{Inf}}
\newcommand{\Ker}{\operatorname{Ker}}
\newcommand{\sgn}{\operatorname{sgn}}
\newcommand{\Tor}{\operatorname{Tor}}
\newcommand{\ab}{\mathrm{ab}}
\newcommand{\K}{\mathcal{K}}
\newcommand{\G}{\mathcal{G}}
\renewcommand{\H}{\mathcal{H}}
\newcommand{\N}{\mathbb{N}}
\newcommand{\Z}{\mathbb{Z}}
\newcommand{\Q}{\mathbb{Q}}
\newcommand{\R}{\mathbb{R}}
\title{Topological full groups of \'etale groupoids}
\author{Hiroki Matui\\
Graduate School of Science \\
Chiba University \\
Inage-ku, Chiba 263-8522, Japan}
\date{}
\begin{document}
\maketitle

\begin{abstract}
This is a survey of the recent development of the study of 
topological full groups of \'etale groupoids on the Cantor set. 
\'Etale groupoids arise from dynamical systems, 
e.g. actions of countable discrete groups, equivalence relations. 
Minimal $\Z$-actions, minimal $\Z^N$-actions and 
one-sided shifts of finite type are basic examples. 
We are interested in algebraic, geometric and analytic properties 
of topological full groups. 
More concretely, 
we discuss simplicity of commutator subgroups, abelianization, 
finite generation, cohomological finiteness properties, 
amenability, the Haagerup property, and so on. 
Homology groups of \'etale groupoids, 
groupoid $C^*$-algebras and their $K$-groups are also investigated. 
\end{abstract}

\section{Introduction}

We discuss various properties of topological full groups of 
topological dynamical systems on Cantor sets. 
The study of full groups in the setting of topological dynamics 
was initiated by T. Giordano, I. F. Putnam and C. F. Skau \cite{GPS99Israel}. 
For a minimal action $\phi:\Z\curvearrowright X$ on a Cantor set $X$, 
they defined several types of full groups and showed that 
these groups completely determine the orbit equivalence class, 
the strong orbit equivalence class and the flip conjugacy class of $\phi$, 
respectively. 

The notion of topological full groups was later generalized to the setting of 
essentially principal \'etale groupoids $\G$ on Cantor sets in \cite{M12PLMS}. 
\'Etale groupoids (called $r$-discrete groupoids in \cite{Rtext}) provide us 
a natural framework for unified treatment of 
various topological dynamical systems. 
The topological full group $[[\G]]$ of $\G$ is 
a subgroup of $\Homeo(\G^{(0)})$ consisting of 
all homeomorphisms of $\G^{(0)}$ 
whose graph is `contained' in the groupoid $\G$ as a compact open subset 
(see Definition \ref{tfg/def}). 
From an action $\phi$ of a discrete group $\Gamma$ on a Cantor set $X$, 
we can construct the \'etale groupoid $\G_\phi$, 
which is called the transformation groupoid (see Example \ref{transf/def}). 
The topological full group $[[\G_\phi]]$ of $\G_\phi$ is 
the group of $\alpha\in\Homeo(X)$ 
for which there exists a continuous map $c:X\to\Gamma$ such that 
$\alpha(x)=\phi^{c(x)}(x)$ for all $x\in X$. 
Many other examples of \'etale groupoids and topological full groups 
will be provided in later sections. 

One of the most fundamental result for topological full groups is 
the isomorphism theorem (Theorem \ref{isomorphism}), 
which says that $\G_1$ is isomorphic to $\G_2$ if and only if 
$[[\G_1]]$ is isomorphic to $[[\G_2]]$. 
In general, it is often difficult to distinguish two discrete groups. 
But, the \'etale groupoids have rich information 
about the topological dynamical systems, 
and so the isomorphism theorem helps us 
to determine the isomorphism class of the topological full groups. 

The homology groups $H_n(\G)$ for $n\geq0$ are defined 
for \'etale groupoids $\G$ (see Definition \ref{homology}). 
When $\G$ is a transformation groupoid $\G_\phi$, 
the homology $H_n(\G_\phi)$ agrees with the group homology 
(Example \ref{ExofH} (2)). 
In many examples, we can check that 
the homology groups `coincide' with the $K$-groups of $C^*_r(\G)$. 
Thus, 
we have isomorphisms $\bigoplus_nH_{2n+i}(\G)\cong K_i(C^*_r(\G))$ 
for $i=0,1$. 
This phenomenon is formulated as the HK conjecture (Conjecture \ref{HK}). 

In many cases, it is known that 
the commutator subgroup $D([[\G]])$ of $[[\G]]$ becomes simple 
(Theorem \ref{almstfnt/thm} (1), Theorem \ref{pi/thm} (1)). 
So, 
it is natural to consider the abelianization $[[\G]]_\ab=[[\G]]/D([[\G]])$. 
It turns out that 
the abelian group $[[\G]]_\ab$ is closely related to 
the homology groups of $\G$. 
This relation is formulated as the AH conjecture (Conjecture \ref{AH}). 

In addition to these two conjectures, 
we are interested in several properties of $[[\G]]$. 
In \cite{M06IJM}, it was shown that $D([[\G_\phi]])$ is finitely generated 
if $\phi:\Z\curvearrowright X$ is a minimal subshift 
(see Theorem \ref{Z/finite} (1)). 
In \cite{M15crelle}, it was shown that, for any SFT groupoid $\G_A$ 
(see Example \ref{SFT/def}), $[[\G_A]]$ is of type F$_\infty$ and 
$D([[\G_A]])$ is finitely generated (Theorem \ref{SFT/finite}). 
Such finiteness conditions of topological full groups are important problems. 
In \cite{JM13Ann}, it was shown that, 
for any minimal action $\phi:\Z\curvearrowright X$, 
$[[\G_\phi]]$ is amenable (see Theorem \ref{Z/amenable}). 
In \cite{M15crelle}, it was shown that 
$[[\G_A]]$ has the Haagerup property for any SFT groupoid $\G_A$ 
(Theorem \ref{SFT/Haagerup}). 
Such analytic properties of topological full groups are also our main concern. 

I would like to thank the organisers of the 2015 Abel Symposium 
for their kind invitation to this marvellous conference.

\section{Preliminaries}

\subsection{\'Etale groupoids}

The cardinality of a set $A$ is written $\#A$ and 
the characteristic function of $A$ is written $1_A$. 
The finite cyclic group of order $n$ is denoted 
by $\Z_n=\{\bar{r}\mid r=1,2,\dots,n\}$. 
We say that a subset of a topological space is clopen 
if it is both closed and open. 
A topological space is said to be totally disconnected 
if its topology is generated by clopen subsets. 
By a Cantor set, 
we mean a compact, metrizable, totally disconnected space 
with no isolated points. 
It is known that any two such spaces are homeomorphic. 
The homeomorphism group of a topological space $X$ is written $\Homeo(X)$. 
The commutator subgroup of a group $\Gamma$ is denoted by $D(\Gamma)$. 
We let $\Gamma_\ab$ denote the abelianization $\Gamma/D(\Gamma)$. 

In this article, by an \'etale groupoid 
we mean a second countable locally compact Hausdorff groupoid 
such that the range map is a local homeomorphism. 
We refer the reader to \cite{Rtext,R08Irish} 
for background material on \'etale groupoids. 
Roughly speaking, a groupoid $\G$ is a `group-like' object, 
in which the product may not be defined for all pairs in $\G$. 
An \'etale groupoid $\G$ is equipped with locally compact Hausdorff topology, 
which is compatible with the groupoid structure, 
and the map $g\mapsto gg^{-1}$ is a local homeomorphism. 
For an \'etale groupoid $\G$, 
we let $\G^{(0)}$ denote the unit space and 
let $s$ and $r$ denote the source and range maps, 
i.e. $s(g)=g^{-1}g$ and $r(g)=gg^{-1}$. 
An element $g\in\G$ can be thought of as an arrow from $s(g)$ to $r(g)$. 
For $x\in\G^{(0)}$, 
$\G(x)=r(\G x)$ is called the $\G$-orbit of $x$. 
When every $\G$-orbit is dense in $\G^{(0)}$, 
$\G$ is said to be minimal. 
For a subset $Y\subset\G^{(0)}$, 
the reduction of $\G$ to $Y$ is $r^{-1}(Y)\cap s^{-1}(Y)$ and 
denoted by $\G|Y$. 
If $Y$ is clopen, then 
the reduction $\G|Y$ is an \'etale subgroupoid of $\G$ in an obvious way. 
For $x\in\G^{(0)}$, 
we write $\G_x=r^{-1}(x)\cap s^{-1}(x)$ and call it the isotropy group of $x$. 
The isotropy bundle of $\G$ is 
$\G'=\{g\in\G\mid r(g)=s(g)\}=\bigcup_{x\in\G^{(0)}}\G_x$. 
We say that $\G$ is principal if $\G'=\G^{(0)}$. 
When the interior of $\G'$ is $\G^{(0)}$, 
we say that $\G$ is essentially principal. 

A subset $U\subset\G$ is called a $\G$-set if $r|U,s|U$ are injective. 
Any open $\G$-set $U$ induces the homeomorphism 
$(r|U)\circ(s|U)^{-1}$ from $s(U)$ to $r(U)$. 
We write $\theta(U)=(r|U)\circ(s|U)^{-1}$. 
When $U,V$ are $\G$-sets, 
\[
U^{-1}=\{g\in\G\mid g^{-1}\in U\}
\]
and 
\[
UV=\{gg'\in\G\mid g\in U,\ g'\in V,\ s(g)=r(g')\}
\]
are also $\G$-sets. 
A probability measure $\mu$ on $\G^{(0)}$ is said to be $\G$-invariant 
if $\mu(r(U))=\mu(s(U))$ holds for every open $\G$-set $U$. 
The set of all $\G$-invariant probability measures is denoted by $M(\G)$. 

For an \'etale groupoid $\G$, 
we denote the reduced groupoid $C^*$-algebra of $\G$ by $C^*_r(\G)$ and 
identify $C_0(\G^{(0)})$ with a subalgebra of $C^*_r(\G)$. 
J. Renault obtained the following theorem 
(see also \cite[Theorem 5.1]{M12PLMS}). 

\begin{theorem}[{\cite[Theorem 5.9]{R08Irish}}]\label{Renault}
Two essentially principal \'etale groupoids $\G_1$ and $\G_2$ are isomorphic 
if and only if there exists an isomorphism $\phi:C^*_r(\G_1)\to C^*_r(\G_2)$ 
such that $\phi(C_0(\G_1^{(0)}))=C_0(\G_2^{(0)})$. 
\end{theorem}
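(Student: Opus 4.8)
The plan is to prove the two directions separately, with the ``only if'' direction being routine functoriality and the ``if'' direction being a reconstruction of the groupoid from the Cartan pair $(C^*_r(\G),C_0(\G^{(0)}))$. For the easy direction, an isomorphism $\psi\colon\G_1\to\G_2$ of \'etale groupoids restricts to a homeomorphism $\G_1^{(0)}\to\G_2^{(0)}$ and carries open $\G_1$-sets to open $\G_2$-sets, so $f\mapsto f\circ\psi^{-1}$ defines a $*$-isomorphism $C_c(\G_1)\to C_c(\G_2)$ compatible with convolution and involution. One checks it intertwines the left regular representations (orbit by orbit), hence is isometric for the reduced norms, and so extends to $\phi\colon C^*_r(\G_1)\to C^*_r(\G_2)$ with $\phi(C_0(\G_1^{(0)}))=C_0(\G_2^{(0)})$.

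For the substantial ``if'' direction, I would first verify that $(C^*_r(\G_i),C_0(\G_i^{(0)}))$ is a Cartan pair in the sense of Renault. The inclusion $C_0(\G_i^{(0)})\subset C^*_r(\G_i)$ admits the faithful conditional expectation $P$ given on $C_c(\G_i)$ by restriction of functions to the unit space; $C_0(\G_i^{(0)})$ contains an approximate unit of $C^*_r(\G_i)$; every $f\in C_c(\G_i)$ supported in an open bisection normalizes $C_0(\G_i^{(0)})$, and such functions span a dense subspace, so the inclusion is regular; and, crucially, the hypothesis that $\G_i$ is essentially principal is precisely what makes $C_0(\G_i^{(0)})$ maximal abelian in $C^*_r(\G_i)$.

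Next I would run Renault's Weyl construction on this pair. To a normalizer $n$ one associates the partial homeomorphism $\alpha_n$ of $\widehat{C_0(\G_i^{(0)})}\cong\G_i^{(0)}$ determined by $b\mapsto n^*bn/(n^*n)$ on $\{n^*n\neq 0\}$; the Weyl groupoid is the set of germs of such $\alpha_n$, topologised through the normalizers, together with its canonical central extension (the Weyl twist). The key step is the identification of this reconstructed object with $\G_i$ itself: a point $g\in\G_i$ lies in an open bisection $U$, any $f\in C_c(U)$ with $f(g)\neq 0$ is a normalizer whose germ at $s(g)$ ``is'' $g$, and this assignment is a well-defined bijection, a homeomorphism, and a groupoid homomorphism by inspection of the topology and the convolution product. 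Essential principality is exactly what guarantees that distinct arrows with the same source and range are separated by normalizers, so that the germ relation collapses to precisely the points of $\G_i$; one also checks that for an \'etale Hausdorff groupoid the reconstructed twist is trivial, so no twisted coefficients intervene. Finally, since $\phi$ carries normalizers to normalizers and the conditional expectation to the conditional expectation, it induces an isomorphism of Weyl groupoids, which by the identification above is an isomorphism $\G_1\cong\G_2$.

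The hard part will be the identification of the Weyl groupoid with $\G$: showing that essential principality forces the germ equivalence on normalizers to be neither too coarse nor too fine — so that one recovers exactly $\G$ and not a proper quotient or a twisted version — is the technical heart, together with the verification that the reconstructed twist vanishes in the \'etale setting. The maximal-abelianness of $C_0(\G^{(0)})$ under essential principality is the other point requiring care, since it is what certifies that the Weyl machine applies at all.
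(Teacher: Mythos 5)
The paper does not prove this statement at all---it is quoted from Renault \cite{R08Irish} (with a pointer to \cite{M12PLMS})---and your sketch is exactly the argument of that source: verify that $(C^*_r(\G),C_0(\G^{(0)}))$ is a Cartan pair (with essential principality giving maximal abelianness) and identify the Weyl groupoid of the pair with $\G$ via germs of normalizers, so that a diagonal-preserving isomorphism induces an isomorphism of groupoids. Your outline, including the role of essential principality in making the germ map injective and the triviality of the reconstructed twist in the untwisted \'etale case, is correct and is essentially the same approach as the cited proof.
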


\subsection{Examples}

In this subsection, 
we present several examples of \'etale groupoids. 
Throughout this subsection, by an \'etale groupoid, 
we mean a second countable \'etale groupoid 
whose unit space is the Cantor set. 

\begin{example}[AF groupoids]\label{AF/def}
We would like to recall the notion of AF groupoids 
(\cite[Definition III.1.1]{Rtext},\cite[Definition 3.7]{GPS04ETDS},
\cite[Definition 2.2]{M12PLMS}). 
Let $\G$ be an \'etale groupoid. 
\begin{itemize}
\item We say that $\K\subset\G$ is an elementary subgroupoid 
if $\K$ is a compact open principal subgroupoid of $\G$ 
such that $\K^{(0)}=\G^{(0)}$. 
\item We say that $\G$ is an AF groupoid 
if it can be written as an increasing union of elementary subgroupoids. 
\end{itemize}
If $\K$ is a compact \'etale principal groupoid, 
then $\K$ is identified with the equivalence relation 
$\{(r(g),s(g))\mid g\in\K\}$ on $\K^{(0)}$ and 
the topology on $\K$ agrees with 
the relative topology from $\K^{(0)}\times\K^{(0)}$. 
Also, the equivalence relation $\K$ is uniformly finite, i.e. 
there exists $n\in\N$ such that $\#r^{-1}(x)\leq n$ for any $x\in\K^{(0)}$. 

An AF groupoid is principal by definition. 
The $C^*$-algebra associated with an AF groupoid is an AF algebra. 
It is known that any AF groupoids are represented 
by Bratteli diagrams (see \cite[Theorem 3.9]{GPS04ETDS}). 
We provide a brief explanation of it. 
A directed graph $B=(V,E)$ is called a Bratteli diagram 
when $V=\bigcup_{n=0}^\infty V_n$ and $E=\bigcup_{n=1}^\infty E_n$ are 
disjoint unions of finite sets of vertices and edges 
with maps $i:E_n\to V_{n-1}$ and $t:E_n\to V_n$ 
both of which are surjective. 
Let 
\[
X_B=\left\{(e_n)_n\in\prod_nE_n
\mid e_n\in E_n,\ t(e_n)=i(e_{n+1})\quad\forall n\in\N\right\}. 
\]
The set $X_B$ endowed with the relative topology is called 
the infinite path space of $B$. 
Define an equivalence relation (i.e. principal groupoid) $\K_m$ by 
\[
\K_m=\{((e_n)_n,(f_n)_n)\in X_B\times X_B\mid
e_n=f_n\quad\forall n\geq m\}. 
\]
Then, $\K_m$ equipped with the relative topology from $X_B\times X_B$ 
is a compact principal \'etale groupoid. 
Clearly one has $\K_m\subset\K_{m+1}$. 
Set $\G=\bigcup_m\K_m$. 
Endowed with the inductive limit topology, $\G$ becomes an AF groupoid. 
Conversely, Theorem 3.9 of \cite{GPS04ETDS} states that 
any AF groupoid arises in such a way. 
\end{example}

\begin{example}[Transformation groupoids]\label{transf/def}
Let $\phi:\Gamma\curvearrowright X$ be 
an action of a countable discrete group $\Gamma$ 
on a Cantor set $X$ by homeomorphisms. 
We let $G_\phi=\Gamma\times X$ and define the following groupoid structure: 
$(\gamma,x)$ and $(\gamma',x')$ are composable 
if and only if $x=\phi^{\gamma'}(x')$, in which case 
$(\gamma,\phi^{\gamma'}(x'))\cdot(\gamma',x')=(\gamma\gamma',x')$, 
and $(\gamma,x)^{-1}=(\gamma^{-1},\phi^\gamma(x))$. 
Then $G_\phi$ is an \'etale groupoid and 
called the transformation groupoid 
arising from $\phi:\Gamma\curvearrowright X$. 
The unit space $\G_\phi^{(0)}$ is canonically identified with $X$ 
via the map $(1,x)\mapsto x$. 

The groupoid $\G_\phi$ is principal if and only if 
the action $\phi$ is free, that is, 
$\phi^\gamma$ does not have any fixed points unless $\gamma=1$. 
The groupoid $\G_\phi$ is essentially principal if and only if 
the action $\phi$ is topologically free, that is, 
$\{x\in X\mid\phi^\gamma(x)=x\}$ has no interior points unless $\gamma=1$. 
The groupoid $\G_\phi$ is minimal if and only if 
the action $\phi$ is minimal, that is, 
any orbit of $\phi$ is dense in $X$. 

The $C^*$-algebra $C^*_r(\G_\phi)$ is canonically isomorphic to 
the crossed product $C^*$-algebra $C(X)\rtimes_r\Gamma$. 
\end{example}

K. Medynets, R. Sauer and A. Thom recently obtained 
the following interesting result. 

\begin{theorem}[{\cite[Theorem 3.2]{MST15}}]
Let $\Gamma$ and $\Lambda$ be finitely generated groups. 
The following are equivalent. 
\begin{enumerate}
\item There exist free actions $\phi:\Gamma\curvearrowright X$ and 
$\psi:\Lambda\curvearrowright Y$ on Cantor sets 
such that $\G_\phi\cong\G_\psi$. 
\item $\Gamma$ and $\Lambda$ are bi-Lipschitz equivalent. 
\end{enumerate}
\end{theorem}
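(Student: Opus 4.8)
The plan is to prove the equivalence by passing between actions, their transformation groupoids, and the coarse geometry of the acting groups. The direction (2) $\Rightarrow$ (1) is the constructive heart of the argument. Starting from a bi-Lipschitz equivalence $f\colon\Gamma\to\Lambda$ (with respect to word metrics coming from finite generating sets), I would build a free action of $\Gamma$ on a Cantor set $X$ and a free action of $\Lambda$ on a Cantor set $Y$ together with an isomorphism $\G_\phi\cong\G_\psi$ of transformation groupoids. The natural candidate for the space is a suitable closed invariant subset of $\{0,1\}^{\Gamma}$ (or a Bernoulli-type shift) on which $\Gamma$ acts freely; freeness on a dense $G_\delta$ is automatic for the full Bernoulli shift, but for the groupoid isomorphism one wants genuine freeness everywhere, so I would work inside an appropriate free minimal (or at least free) subsystem, using that every countable group admits a free action on the Cantor set. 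The key point is that a bi-Lipschitz bijection $\Gamma\to\Lambda$ is exactly the data needed to identify the two orbit equivalence relations \emph{together with their cocycle structure}: one transports the $\Gamma$-labelling on orbits to a $\Lambda$-labelling in a way that is locally constant because $f$ and $f^{-1}$ move points a bounded distance, and bounded distance in the word metric means the relevant cocycle takes finitely many values on each orbit-neighbourhood, hence is continuous.

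More concretely, for (2) $\Rightarrow$ (1) I would proceed as follows. First, fix a free action $\phi\colon\Gamma\curvearrowright X$ on a Cantor set. Second, use the bi-Lipschitz map $f$ to define, on each $\Gamma$-orbit, an identification of that orbit with a $\Lambda$-orbit: declare $\phi^{\gamma}(x)$ and $\phi^{\gamma'}(x)$ to be ``$\Lambda$-translates'' by the element $f(\gamma\gamma_x^{-1})f(\gamma'\gamma_x^{-1})^{-1}$-type expression, where one has to choose basepoints coherently. The fact that $f$ is bi-Lipschitz (not merely a quasi-isometry) is what lets this be set up as a genuine \emph{bijection} of orbits rather than a coarse map, so the resulting equivalence relation on $X$ is the same as the $\Gamma$-orbit relation, but now carrying a $\Lambda$-valued cocycle $c\colon\G_\phi\to\Lambda$. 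Third, check that $c$ is a continuous (locally constant) cocycle: this uses that for each $g\in\G_\phi$ sitting in a compact open bisection, the nearby groupoid elements correspond to bounded-length words, and $f$ restricted to a ball is a fixed finite table, so $c$ is constant on a neighbourhood. Fourth, the cocycle $c$ realizes $\G_\phi$ as a transformation groupoid $\G_\psi$ for an action $\psi$ of $\Lambda$ on $X=Y$; freeness of $\psi$ follows because $f^{-1}$ is also bi-Lipschitz, so the $\Lambda$-labels separate points along orbits just as the $\Gamma$-labels do. This gives $\G_\phi\cong\G_\psi$.

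For (1) $\Rightarrow$ (2), suppose $\G_\phi\cong\G_\psi$ with $\phi,\psi$ free. The isomorphism of groupoids restricts to a homeomorphism $h\colon X\to Y$ carrying $\Gamma$-orbits to $\Lambda$-orbits bijectively, and the groupoid structure supplies continuous cocycles intertwining the $\Gamma$- and $\Lambda$-actions. Pick a point $x$ whose $\Gamma$-orbit and $\Lambda$-orbit (via $h$) are both infinite (free actions of infinite groups have all orbits infinite), and transport the bijection $\Gamma\to\G_\phi(x)\xrightarrow{h}\G_\psi(h(x))\to\Lambda$; call it $f$. Continuity of the cocycles plus compactness of $X$ forces $f$ to be coarsely Lipschitz in both directions: the cocycle $c\colon\G_\phi\to\Lambda$ is locally constant, hence takes finitely many values on $\{g : s(g)\in X,\ c_\Gamma(g)\in S_\Gamma\}$ for the finite generating set $S_\Gamma$, uniformly in the basepoint, which bounds how far $f$ can stretch a generator; symmetrically for $f^{-1}$. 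So $f$ is a bi-Lipschitz equivalence $\Gamma\to\Lambda$ after adjusting by a bounded amount (and a genuine bijection can be arranged since both orbits are in bijection with the respective groups).

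The main obstacle I anticipate is the passage between \emph{quasi-isometry-flavoured} hypotheses and genuine \emph{bijective} bi-Lipschitz maps, and dually, extracting a \emph{continuous} cocycle rather than merely a coarse one. The subtlety is that ``bi-Lipschitz equivalence of groups'' in the sense used here must mean the existence of a bijection $\Gamma\to\Lambda$ that is Lipschitz with Lipschitz inverse (this is strictly stronger than quasi-isometry in general, e.g. by the Burago--Kleiner / McMullen phenomenon for uniformly discrete sets, though for groups it coincides with bilipschitz equivalence of the Cayley graphs as metric spaces); getting the orbit equivalence relations to literally coincide — not just be coarsely comparable — is precisely where bijectivity is essential, and verifying that the transported cocycle is locally constant requires a careful uniformity argument over all orbits simultaneously, which is where I expect the real work to lie.
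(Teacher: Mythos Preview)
The paper does not prove this theorem; it is quoted as a result of Medynets, Sauer and Thom \cite{MST15} with no argument supplied. There is therefore no proof in the paper against which to compare your proposal.

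On the substance of your sketch: the direction (1)$\Rightarrow$(2) is essentially correct. An isomorphism $\G_\phi\cong\G_\psi$ restricts to a homeomorphism of unit spaces and yields continuous cocycles $\G_\phi\to\Lambda$ and $\G_\psi\to\Gamma$; local constancy together with compactness forces each cocycle to take only finitely many values on $\{s\}\times X$ for each generator $s$, and this uniform bound is precisely the bi-Lipschitz estimate for the induced bijection between a $\Gamma$-orbit and the corresponding $\Lambda$-orbit.

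The direction (2)$\Rightarrow$(1) has a genuine gap. You begin with an \emph{arbitrary} free $\Gamma$-action and try to relabel its orbits by $\Lambda$ via $f$, writing a cocycle of the form $f(\gamma\gamma_x^{-1})f(\gamma'\gamma_x^{-1})^{-1}$. The phrase ``choose basepoints coherently'' is the entire difficulty, and you have not said how to do it. In general there is no continuous choice of orbit basepoints (orbits are dense, there is no continuous cross-section), so the $\Lambda$-valued map you describe need not be locally constant and hence need not define a $\Lambda$-action on $X$ with the same transformation groupoid. The argument in \cite{MST15} does not start from an arbitrary action: one manufactures a \emph{specific} Cantor system out of the bi-Lipschitz bijection $f$ itself (via an orbit closure in a shift-type space built from $f$), so that the cocycle is continuous by construction rather than by an a posteriori verification. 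Your outline lacks this construction, and without it the implication does not go through.
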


\begin{example}[SFT groupoids]\label{SFT/def}
We recall the definition of 
\'etale groupoids arising from one-sided shifts of finite type 
(\cite[Section 6.1]{M15crelle}). 
Let $(V,E)$ be a finite directed graph, 
where $V$ is a finite set of vertices 
and $E$ is a finite set of edges. 
For $e\in E$, $i(e)$ denotes the initial vertex of $e$ and 
$t(e)$ denotes the terminal vertex of $e$. 
Let $A=(A(\xi,\eta))_{\xi,\eta\in V}$ be 
the adjacency matrix of $(V,E)$, that is, 
\[
A(\xi,\eta)=\#\{e\in E\mid i(e)=\xi,\ t(e)=\eta\}. 
\]
We assume that $A$ is irreducible 
(i.e. for all $\xi,\eta\in V$ 
there exists $n\in\N$ such that $A^n(\xi,\eta)>0$) 
and that $A$ is not a permutation matrix. 
Define 
\[
X_A=\{(x_k)_{k\in\N}\in E^\N
\mid t(x_k)=i(x_{k+1})\quad\forall k\in\N\}. 
\]
With the product topology, $X_A$ is a Cantor set. 
Define a surjective continuous map $\sigma_A:X_A\to X_A$ by 
\[
\sigma_A(x)_k=x_{k+1}\quad k\in\N,\ x=(x_k)_k\in X_A. 
\]
In other words, $\sigma_A$ is the (one-sided) shift on $X_A$.  
It is easy to see that $\sigma_A$ is a local homeomorphism. 
The dynamical system $(X_A,\sigma_A)$ is called 
the one-sided irreducible shift of finite type (SFT) 
associated with the graph $(V,E)$ (or the matrix $A$). 

The \'etale groupoid $\G_A$ for $(X_A,\sigma_A)$ is given by 
\[
\G_A=\{(x,n,y)\in X_A\times\Z\times X_A\mid
\exists k,l\in\N,\ n=k{-}l,\ \sigma_A^k(x)=\sigma_A^l(y)\}. 
\]
The topology of $\G_A$ is generated by the sets 
$\{(x,k{-}l,y)\in\G_A\mid x\in P,\ y\in Q,\ \sigma_A^k(x)=\sigma_A^l(y)\}$, 
where $P,Q\subset X_A$ are open and $k,l\in\N$. 
Two elements $(x,n,y)$ and $(x',n',y')$ in $\G$ are composable 
if and only if $y=x'$, and the multiplication and the inverse are 
\[
(x,n,y)\cdot(y,n',y')=(x,n{+}n',y'),\quad (x,n,y)^{-1}=(y,-n,x). 
\]
We identify $X_A$ with the unit space $\G_A^{(0)}$ via $x\mapsto(x,0,x)$. 
We call $\G_A$ the SFT groupoid associated with the matrix $A$. 

The groupoid $\G_A$ is essentially principal and minimal. 

The groupoid $C^*$-algebra $C^*_r(\G_A)$ is isomorphic to 
the Cuntz-Krieger algebra $\mathcal{O}_A$ of \cite{CK80Invent}, 
which is simple and purely infinite. 
\end{example}

\section{Homology groups}

The homology groups $H_n(\G)$ of an \'etale groupoid $\G$ were first 
introduced and studied by M. Crainic and I. Moerdijk in \cite{CM00crelle}. 
In the case that the unit space $\G^{(0)}$ is a Cantor set, 
we investigated connections 
between the homology groups and dynamical properties of $\G$ 
in \cite{M12PLMS,M15crelle,M15}. 
In this section, 
we would like to recall the definition of $H_n(\G)$ 
for an \'etale groupoid $\G$ whose unit space is a Cantor set. 

Let $A$ be a topological abelian group. 
For a locally compact Hausdorff space $X$, we denote by $C_c(X,A)$ 
the set of $A$-valued continuous functions with compact support. 
When $X$ is compact, we simply write $C(X,A)$. 
With pointwise addition, $C_c(X,A)$ is an abelian group. 
Let $\pi:X\to Y$ be a local homeomorphism 
between locally compact Hausdorff spaces. 
For $f\in C_c(X,A)$, we define a map $\pi_*(f):Y\to A$ by 
\[
\pi_*(f)(y)=\sum_{\pi(x)=y}f(x). 
\]
It is not so hard to see that $\pi_*(f)$ belongs to $C_c(Y,A)$ and 
that $\pi_*$ is a homomorphism from $C_c(X,A)$ to $C_c(Y,A)$. 
Besides, if $\pi':Y\to Z$ is another local homeomorphism to 
a locally compact Hausdorff space $Z$, then 
one can check $(\pi'\circ\pi)_*=\pi'_*\circ\pi_*$ in a direct way. 
Thus, $C_c(\cdot,A)$ is a covariant functor 
from the category of locally compact Hausdorff spaces 
with local homeomorphisms 
to the category of abelian groups with homomorphisms. 

Let $\G$ be an \'etale groupoid. 
For $n\in\N$, we write $\G^{(n)}$ 
for the space of composable strings of $n$ elements in $\G$, that is, 
\[
\G^{(n)}=\{(g_1,g_2,\dots,g_n)\in\G^n\mid
s(g_i)=r(g_{i+1})\text{ for all }i=1,2,\dots,n{-}1\}. 
\]
For $i=0,1,\dots,n$, 
we let $d_i:G^{(n)}\to G^{(n-1)}$ be a map defined by 
\[
d_i(g_1,g_2,\dots,g_n)=\begin{cases}
(g_2,g_3,\dots,g_n) & i=0 \\
(g_1,\dots,g_ig_{i+1},\dots,g_n) & 1\leq i\leq n{-}1 \\
(g_1,g_2,\dots,g_{n-1}) & i=n. 
\end{cases}
\]
When $n=1$, we let $d_0,d_1:\G^{(1)}\to\G^{(0)}$ be 
the source map and the range map, respectively. 
Clearly the maps $d_i$ are local homeomorphisms. 

Define the homomorphisms $\partial_n:C_c(\G^{(n)},A)\to C_c(\G^{(n-1)},A)$ 
by 
\[
\partial_n=\sum_{i=0}^n(-1)^id_{i*}. 
\]
It is easy see that 
the abelian groups $C_c(\G^{(n)},A)$ 
together with the boundary operators $\partial_n$ form a chain complex. 

\begin{definition}
[{\cite[Section 3.1]{CM00crelle},\cite[Definition 3.1]{M12PLMS}}]
\label{homology}
We let $H_n(\G,A)$ be the homology groups of the Moore complex above, 
i.e. $H_n(\G,A)=\ker\partial_n/\Ima\partial_{n+1}$, 
and call them the homology groups of $\G$ with constant coefficients $A$. 
When $A=\Z$, we simply write $H_n(\G)=H_n(\G,\Z)$. 
In addition, we define 
\[
H_0(\G)^+=\{[f]\in H_0(\G)\mid f(x)\geq0\text{ for all }x\in\G^{(0)}\}, 
\]
where $[f]$ denotes the equivalence class of $f\in C_c(\G^{(0)},\Z)$. 
\end{definition}

\begin{remark}
The pair $(H_0(\G),H_0(\G)^+)$ is not necessarily an ordered abelian group 
in general, 
because $H_0(\G)^+\cap({-}H_0(\G)^+)$ may not equal $\{0\}$. 
In fact, 
when $\G$ is the SFT groupoid, $H_0(G)^+=H_0(G)$. 
\end{remark}

\begin{example}\label{ExofH}
\begin{enumerate}
\item Let $\G$ be an AF groupoid (see Example \ref{AF/def}). 
There exists an isomorphism $\pi:H_0(\G)\to K_0(C^*_r(\G))$ such that 
$\pi(H_0(\G)^+)=K_0(C^*_r(\G))^+$ and $\pi([1_{\G^{(0)}}])=[1]$ 
(\cite[Theorem 4.10]{M12PLMS}). 
For $n\geq1$, we have $H_n(\G)=0$ (\cite[Theorem 4.11]{M12PLMS}). 
\item Let $\G_\phi$ be the transformation groupoid 
associated with a group action $\phi:\Gamma\curvearrowright X$ 
(see Example \ref{transf/def}). 
Then the homology groups $H_n(\G)$ is naturally isomorphic to 
the usual group homology $H_n(\Gamma,C(X,\Z))$ 
of $\Gamma$ with coefficients in $C_c(X,\Z)$ (see \cite[Chapter III]{BrGTM}). 
\item Let $\G_A$ be an SFT groupoid, 
where $A$ is the adjacency matrix of 
an irreducible finite directed graph $(V,E)$ (see Example \ref{SFT/def}). 
The matrix $A$ acts on the abelian group $\Z^V$ by multiplication. 
Then one has 
\[
H_n(\G_A)\cong\begin{cases}\Coker(\id-A^t)&n=0\\
\Ker(\id-A^t)&n=1\\0&n\geq2. \end{cases}
\]
See \cite[Theorem 4.14]{M12PLMS}. 
\end{enumerate}
\end{example}

The cohomology groups $H^n(\G)$ of an \'etale groupoid $\G$ were 
introduced by J. Renault in \cite{Rtext} and 
have been studied by many authors. 
When $\G_\phi$ is the transformation groupoid 
associated with a group action $\phi:\Gamma\curvearrowright X$, 
the cohomology $H^n(\G_\phi)$ is canonically isomorphic to 
the usual group cohomology $H^n(\Gamma,C(X,\Z))$. 
In particular, when $\Gamma=\Z^N$, 
there exist natural isomorphisms $H_n(\G_\phi)\cong H^{N-n}(\G_\phi)$ 
(Poincar\'e duality). 
In general, however, 
we do not know if any connections exist between $H_*(\G)$ and $H^*(\G)$. 

For the homology groups $H_n(\G)$, the following K\"unneth theorem holds. 

\begin{theorem}[{\cite[Theorem 2.4]{M15}}]\label{Kunneth}
Let $\G$ and $\H$ be \'etale groupoids. 
For any $n\geq0$, there exists a natural short exact sequence 
\[
0\longrightarrow\bigoplus_{i+j=n}H_i(\G)\otimes H_j(\H)
\longrightarrow H_n(\G\times\H)
\longrightarrow\bigoplus_{i+j=n-1}\Tor(H_i(\G),H_j(\H))\longrightarrow0. 
\]
Furthermore these sequences split (but not canonically). 
\end{theorem}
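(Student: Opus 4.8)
The plan is to establish the Künneth short exact sequence by constructing an explicit chain-level comparison between the Moore complex computing $H_*(\G\times\H)$ and the tensor product of the Moore complexes computing $H_*(\G)$ and $H_*(\H)$, and then invoke the algebraic Künneth theorem for complexes of abelian groups. First I would identify $(\G\times\H)^{(n)}$ with a suitable subspace related to $\G^{(n)}\times\H^{(n)}$; the key point is that a composable string in the product groupoid is just a pair of composable strings (of the same length) in $\G$ and $\H$, since composability in $\G\times\H$ is coordinatewise. This gives a homeomorphism $(\G\times\H)^{(n)}\cong\G^{(n)}\times\H^{(n)}$ compatible with the face maps.

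Next I would use the fact that for locally compact Hausdorff spaces $X$ and $Y$ there is a natural isomorphism $C_c(X\times Y,\Z)\cong C_c(X,\Z)\otimes C_c(Y,\Z)$ (functions of product type span, and the algebraic tensor product of the two groups of finitely-supported-modulo-clopen $\Z$-valued functions is exactly the group of such functions on the product), and that this isomorphism intertwines $(\pi\times\pi')_*$ with $\pi_*\otimes\pi'_*$ for local homeomorphisms $\pi,\pi'$. Combining this with the previous step, the chain complex $C_c((\G\times\H)^{(*)},\Z)$ is naturally isomorphic to the tensor product (total complex of the double complex) of $C_c(\G^{(*)},\Z)$ and $C_c(\H^{(*)},\Z)$, because the boundary map $\partial_n=\sum(-1)^id_{i*}$ on the product decomposes, under the identification, into the two partial boundary operators with the standard Koszul sign. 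Once this chain-level identification is in place, the statement follows immediately from the classical algebraic Künneth theorem: for a bounded-below complex $C_*$ of free abelian groups (note $C_c(X,\Z)$ is free abelian, as it is a direct sum of copies of $\Z$ indexed by a clopen partition) and an arbitrary complex $D_*$, there is a natural split short exact sequence $0\to\bigoplus_{i+j=n}H_i(C)\otimes H_j(D)\to H_n(C\otimes D)\to\bigoplus_{i+j=n-1}\Tor(H_i(C),H_j(D))\to0$.

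I expect the main obstacle to be the careful verification that the natural map $C_c(X,\Z)\otimes C_c(Y,\Z)\to C_c(X\times Y,\Z)$ is an isomorphism and that it is compatible with pushforwards along local homeomorphisms with the correct signs. Surjectivity uses that any compactly supported locally constant $\Z$-valued function on $X\times Y$ can be written as a finite $\Z$-combination of products $1_U\times 1_V$ of indicator functions of compact open sets (refine a finite cover by open boxes to a finite clopen partition into boxes); injectivity uses freeness of the groups involved. The sign bookkeeping in matching $\partial_{\G\times\H}$ with $\partial_\G\otimes 1\pm 1\otimes\partial_\H$ on the total complex is routine but must be done consistently; reindexing the simplicial structure of $(\G\times\H)^{(*)}$ as a diagonal of a bisimplicial object and using the Eilenberg--Zilber theorem is an alternative route that packages the sign issues automatically. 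Everything else—naturality in $\G$ and $\H$, and the splitting—is then inherited directly from the algebraic Künneth theorem, since $C_c(\cdot,\Z)$ is functorial for local homeomorphisms and the chain-level isomorphism is natural.
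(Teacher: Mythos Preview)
The survey does not prove this theorem; it merely cites \cite[Theorem 2.4]{M15}. So there is no proof in the present paper to compare against. That said, your outline contains a genuine gap that would have to be fixed regardless of which reference one follows.

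The problematic step is the claim that the Moore complex of $\G\times\H$ is \emph{isomorphic} as a chain complex to the total tensor product complex, ``because the boundary map $\partial_n=\sum(-1)^id_{i*}$ on the product decomposes, under the identification, into the two partial boundary operators with the standard Koszul sign.'' This is false. Under the identification $(\G\times\H)^{(n)}\cong\G^{(n)}\times\H^{(n)}$, the face maps are the \emph{diagonal} ones, $d_i^{\G\times\H}=d_i^{\G}\times d_i^{\H}$, so on $C_c(\G^{(n)},\Z)\otimes C_c(\H^{(n)},\Z)$ the differential is
\[
\partial_n^{\G\times\H}=\sum_{i=0}^n(-1)^i\,(d_i^{\G})_*\otimes(d_i^{\H})_*,
\]
which is \emph{not} of the form $\partial^{\G}\otimes1\pm1\otimes\partial^{\H}$. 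In other words, the Moore complex of $\G\times\H$ is the diagonal of the bisimplicial abelian group $(p,q)\mapsto C_c(\G^{(p)},\Z)\otimes C_c(\H^{(q)},\Z)$, whereas the algebraic K\"unneth theorem applies to the \emph{total} complex. These two complexes are not isomorphic in general; they are only chain homotopy equivalent, and that equivalence is precisely the content of the Eilenberg--Zilber theorem (via the Alexander--Whitney and shuffle maps).

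So what you list at the end as an ``alternative route'' is in fact the necessary route: one must first recognise $C_c((\G\times\H)^{(*)},\Z)$ as the diagonal of a bisimplicial abelian group, then invoke Eilenberg--Zilber to pass to the total tensor product complex, and only then apply the algebraic K\"unneth theorem (your observation that each $C_c(\G^{(n)},\Z)$ is free abelian, via Pontryagin's criterion, is what makes the latter applicable, including the splitting). The remaining ingredients of your outline---the identification $C_c(X\times Y,\Z)\cong C_c(X,\Z)\otimes C_c(Y,\Z)$ and its compatibility with pushforwards---are correct and are exactly what is needed to set up the bisimplicial picture.
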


In \cite[Section 2.3]{M15}, 
we made the following conjecture 
about homology groups $H_n(\G)$ and $K$-groups $K_i(C^*_r(\G))$. 

\begin{conjecture}[HK conjecture]\label{HK}
Let $\G$ be an essentially principal minimal \'etale groupoid 
whose unit space $\G^{(0)}$ is a Cantor set. 
Then we have 
\[
\bigoplus_{i=0}^\infty H_{2i}(\G)\cong K_0(C^*_r(\G))
\]
and 
\[
\bigoplus_{i=0}^\infty H_{2i+1}(\G)\cong K_1(C^*_r(\G)). 
\]
\end{conjecture}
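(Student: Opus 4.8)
We sketch a possible line of attack on this conjecture. The plan is to proceed by a ``building blocks plus transfer machine'' argument. First I would verify the two displayed isomorphisms directly on the basic classes of groupoids where both sides are computable. For AF groupoids this is immediate from Example \ref{ExofH} (1): there $H_{2i}(\G)=0$ for $i\geq1$ and $H_{2i+1}(\G)=0$ for all $i$, while $K_0(C^*_r(\G))\cong H_0(\G)$ and $K_1(C^*_r(\G))=0$. For a transformation groupoid $\G_\phi$ with $\phi\colon\Z\curvearrowright X$ one combines Example \ref{ExofH} (2) (so $H_0(\G_\phi)$ and $H_1(\G_\phi)$ are the coinvariants and invariants of $\Z$ acting on $C(X,\Z)$, and $H_n(\G_\phi)=0$ for $n\geq2$) with the Pimsner--Voiculescu exact sequence for $C(X)\rtimes\Z$. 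For $\Z^N$-actions one bootstraps from the case $N=1$ using the K\"unneth theorem (Theorem \ref{Kunneth}) on the homology side and the K\"unneth formula for $C^*$-algebras on the $K$-theory side. For SFT groupoids one matches Example \ref{ExofH} (3) against the known $K$-theory of the Cuntz--Krieger algebra $\mathcal{O}_A$, namely $K_0(\mathcal{O}_A)\cong\Coker(\id-A^t)$ and $K_1(\mathcal{O}_A)\cong\Ker(\id-A^t)$.

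The substantive step is to produce a general transfer mechanism linking the two sides. I would try to construct an Atiyah--Hirzebruch-type spectral sequence with $E^2$-page $E^2_{p,q}=H_p(\G)$ for $q$ even and $E^2_{p,q}=0$ for $q$ odd, converging to $K_{p+q}(C^*_r(\G))$; granting its existence and degeneration at the $E^2$-stage, the two isomorphisms would follow up to the resulting extension problem, which one would hope to split as in Theorem \ref{Kunneth}. The natural input for such a spectral sequence is either the homological filtration coming from the simplicial space $\G^{(\bullet)}$ or a filtration of $\G$ by open subgroupoids, together with a compatible filtration of $C^*_r(\G)$ whose subquotients have $K$-theory already understood (ideally stably AF). A complementary route, adapted to the groupoids that actually occur above, is to exploit the fact that SFT groupoids, and more generally the \'etale groupoids attached to a surjective local homeomorphism $\sigma\colon X\to X$, admit a ``mapping-torus-like'' description: on the homology side this should yield a long exact sequence for $H_*(\G)$ of Pimsner--Voiculescu type, which one would then match termwise with the genuine Pimsner--Voiculescu/Cuntz six-term sequence for $C^*_r(\G)$, thereby reducing the statement to the AF case treated above.

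The main obstacle is exactly the construction and control of this transfer mechanism in full generality. No Atiyah--Hirzebruch spectral sequence relating $H_*(\G)$ to $K_*(C^*_r(\G))$ is known for general \'etale groupoids, and even once a plausible filtration is chosen, identifying its $E^2$-term with groupoid homology, establishing degeneration, and resolving the extension problem (non-canonical splittings, torsion, the $\Tor$-contributions visible in Theorem \ref{Kunneth}) are each delicate. More fundamentally, one would like a natural comparison map $\bigoplus_i H_{2i+j}(\G)\to K_j(C^*_r(\G))$ --- a sort of Chern character --- implementing the isomorphism, but no such map is presently available outside the computable classes: $H_*(\G)$ is an essentially combinatorial invariant of the groupoid, while $K_*(C^*_r(\G))$ is analytic, and bridging the two is the whole point of the conjecture. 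For these reasons I would expect progress to come class by class --- enlarging the list of verified examples and isolating structural hypotheses on $\G$ (a good decomposition into AF pieces, or a finiteness condition such as finite dynamic asymptotic dimension) under which the transfer argument can be carried through --- rather than from a single uniform proof.
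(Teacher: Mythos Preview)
The statement you are addressing is labelled \emph{Conjecture} in the paper, and the paper does not supply a proof of it; immediately after stating Conjecture~\ref{HK} the author only discusses classes of examples (AF groupoids, transformation groupoids of $\Z$- and $\Z^N$-actions, SFT groupoids) for which the conclusion can be checked by hand. There is therefore no ``paper's own proof'' to compare against.

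Your write-up is honest about this: you present a line of attack rather than a proof, and your survey of the verified cases (AF via Example~\ref{ExofH}(1), $\Z$-actions via Pimsner--Voiculescu, SFT via Example~\ref{ExofH}(3), products via K\"unneth) matches the paper's own discussion following the conjecture. Your proposed Atiyah--Hirzebruch-type spectral sequence and your candid account of the obstacles (no general comparison map, degeneration and extension problems) are reasonable, but none of this constitutes a proof, and you say as much. One small correction: for $\Z^N$-actions the paper only claims the isomorphism after tensoring with $\Q$ (via the Chern character), not integrally, so your remark that one ``bootstraps from the case $N=1$ using the K\"unneth theorem'' overstates what is actually established there. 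In summary, there is no gap to point to because you do not claim to have closed one; the conjecture remains open in the paper, and your proposal is a research programme, not a proof.
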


\begin{example}
The HK conjecture is true for any AF groupoid $\G$. 
This is clear from Example \ref{ExofH} (1). 
\end{example}

\begin{example}
Let $\G_\phi$ be the transformation groupoid 
associated with a group action $\phi:\Gamma\curvearrowright X$. 
In general, 
it is not known whether the HK conjecture holds for $\G_\phi$. 
When $\Gamma=\Z$, 
the Pimsner-Voiculescu exact sequence implies that 
the HK conjecture is true. 
Suppose $\Gamma=\Z^N$. 
The suspension space $\Omega_\phi$ is 
the quotient space of $\R^N\times X$ by the equivalence relation 
\[
(t,x)\sim(t',x')\iff t-t'\in\Z^N,\ \phi^{t-t'}(x)=x'. 
\]
The translation $(t,x)\mapsto (t+s,x)$ gives rise to 
an action $\tilde\phi:\R^N\curvearrowright\Omega_\phi$. 
It is well-known that $C(\Omega_\phi)\rtimes_{\tilde\phi}\R^N$ is 
stably isomorphic to $C(X)\rtimes_\phi\Z^N$. 
Then we have 
\begin{align*}
K_i(C^*_r(\G_\phi))&=K_i(C(X)\rtimes_\phi\Z^N)
\cong K_i(C(\Omega_\phi)\rtimes_{\tilde\phi}\R^N)\\
&\cong K_{i+N}(C(\Omega_\phi))\qquad\because\text{Thom Isomorphism}\\
&=K^{i+N}(\Omega_\phi). 
\end{align*}
On the other hand, 
we know $H_n(\G_\phi)=H_n(\Z^N,C(X,\Z))$ is naturally isomorphic to 
$\check H^n(\Omega_\phi,\Z)$, 
where $\check H$ denotes the \v{C}ech cohomology. 
(This is a folklore fact and I don't know an appropriate reference. 
A relevant remark can be found 
in the final paragraph of \cite[Chapter III.1]{BrGTM}.) 
By the Chern character, there exist isomorphisms 
$K^*(\Omega_\phi)\otimes\Q\to\bigoplus H^*(\Omega_\phi,\Z)\otimes\Q$. 
It follows that there exist isomorphisms 
\[
\bigoplus_{i=0}^\infty H_{2i}(\G_\phi)\otimes\Q
\cong K_0(C^*_r(\G_\phi))\otimes\Q
\]
and 
\[
\bigoplus_{i=0}^\infty H_{2i+1}(\G_\phi)\otimes\Q
\cong K_1(C^*_r(\G_\phi))\otimes\Q. 
\]
The HK conjecture asks 
if the integral version of these isomorphisms is true or not. 

For more general group actions $\Gamma\curvearrowright X$, 
the HK conjecture is wide open. 
\end{example}

\begin{example}
Let $\G_A$ be an SFT groupoid, 
where $A$ is the adjacency matrix of 
an irreducible finite directed graph $(V,E)$. 
By Example \ref{ExofH} (3), 
we can see $H_i(\G_A)\cong K_i(C^*_r(\G_A))$ for $i=1,2$. 
Therefore, the HK conjecture holds for $\G_A$. 
\end{example}

\section{Topological full groups}

In this section, 
we introduce the definition of topological full groups. 

\begin{definition}[{\cite[Definition 2.3]{M12PLMS}}]\label{tfg/def}
Let $\G$ be an essentially principal \'etale groupoid 
whose unit space $\G^{(0)}$ is a Cantor set. 
The set of all $\alpha\in\Homeo(\G^{(0)})$ for which 
there exists a compact open $\G$-set $U$ 
satisfying $\alpha=\theta(U)$ 
is called the topological full group of $\G$ and denoted by $[[\G]]$. 
\end{definition}

For $\alpha\in[[\G]]$ the compact open $\G$-set $U$ as above uniquely exists, 
because $\G$ is essentially principal. 
Obviously $[[\G]]$ is a subgroup of $\Homeo(\G^{(0)})$. 
Since $\G$ is second countable, it has countably many compact open subsets, 
and so $[[\G]]$ is at most countable. 

A homeomorphism $\alpha:\G^{(0)}\to\G^{(0)}$ belongs to $[[\G]]$ 
if and only if for any $x\in\G^{(0)}$ 
there exists a compact open $\G$-set $V$ 
such that $x$ is in $s(V)$ and 
$\alpha$ equals $\theta(V)$ on a neighborhood of $x$. 
Thus, $\alpha$ is in $[[\G]]$ if and only if 
the `graph' of $\alpha$ is a clopen subset of $\G$. 

\begin{example}\label{AF/tfg}
Let $\G$ be an AF groupoid 
arising from a Bratteli diagram $(V,E)$ (see Example \ref{AF/def}). 
For each $k\in\N$ and $v\in V_k$, 
we let $E_v$ be the set of paths from a vertex in $V_0$ to the vertex $v$, 
i.e. 
\[
E_v=\{(e_1,e_2,\dots,e_k)\mid t(e_n)=i(e_{n+1}),\ t(e_k)=v\}. 
\]
Suppose that a permutation $\sigma_v$ on $E_v$ is given. 
Then we can define $\tilde\sigma_v\in[[\G]]$ by 
\[
\tilde\sigma_v((e_n)_n)
=\begin{cases}(\sigma_v(e_1,e_2,\dots,e_k),e_{k+1},\dots)&t(e_k)=v\\
(e_n)_n&t(e_k)\neq v. \end{cases}
\]
Let $G_k\subset[[\G]]$ be a subgroup generated by 
$\tilde\sigma_v$ for vertices $v\in V_k$ and permutations $\sigma_v$. 
It is easy to see 
\[
G_k\cong\bigoplus_{v\in V_k}\mathfrak{S}_{\#E_v},\quad 
G_k\subset G_{k+1}
\]
and $[[\G]]=\bigcup_kG_k$. 
Thus, $[[\G]]$ is an increasing union of subgroups isomorphic to 
finite direct sums of symmetric groups. 
Conversely, it is known that if $[[\G]]$ is locally finite, then 
$\G$ is AF (\cite[Proposition 3.2]{M06IJM}). 
\end{example}

\begin{example}\label{transf/tfg}
Let $\G_\phi$ be the transformation groupoid 
associated with a group action $\phi:\Gamma\curvearrowright X$ 
(see Example \ref{transf/def}). 
Suppose that $\G_\phi$ is essentially principal. 
Take $\alpha\in[[\G_\phi]]$. 
There exists a compact open $\G_\phi$-set $U$ such that $\alpha=\theta(U)$. 
We can find a continuous map $c:X\to\Gamma$ 
such that $U=\{(c(x),x)\mid x\in X\}$. 
It follows that $\alpha(x)=\phi^{c(x)}(x)$ for all $x\in X$. 
\end{example}

\begin{example}\label{SFT/tfg}
Let $\G_A$ be an SFT groupoid, 
where $A$ is the adjacency matrix of 
an irreducible finite directed graph $(V,E)$ (see Example \ref{SFT/def}). 
We say that $\mu=(e_1,e_2,\dots,e_k)\in E^k$ is a path 
if $t(e_j)=i(e_{j+1})$ for every $j=1,2,\dots,k{-}1$. 
The terminal vertex of $\mu$ is written $t(\mu)=t(e_k)$. 
The length of $\mu$ is written $\lvert\mu\rvert=k$. 
For a path $\mu=(e_1,e_2,\dots,e_k)$, 
\[
C_\mu=\{(x_n)_{n\in\N}\in X_A\mid x_j=e_j\quad\forall j=1,2,\dots,k\}
\]
is a clopen subset of $X_A$ and is called a cylinder set. 
For two paths $\mu$ and $\nu$ with $t(\mu)=t(\nu)$, 
we define a compact open $\G_A$-set $U_{\mu,\nu}$ by 
\[
U_{\mu,\nu}=\{(x,\lvert\mu\rvert{-}\lvert\nu\rvert,y)\in\G_A\mid
\sigma_A^{\lvert\mu\rvert}(x)=\sigma^{\lvert\nu\rvert}(y),\ 
x\in C_\mu,\ y\in C_\nu\}. 
\]
The subsets $U_{\mu,\nu}$ form a base for the topology of $\G_A$. 

Take $\alpha\in[[\G_A]]$. 
There exist $\mu_1,\mu_2,\dots,\mu_m$ and $\nu_1,\nu_2,\dots,\nu_m$ 
such that the following hold. 
\begin{itemize}
\item $\{C_{\mu_i}\mid i=1,2,\dots,m\}$ is a clopen partition of $X_A$. 
\item $\{C_{\nu_i}\mid i=1,2,\dots,m\}$ is a clopen partition of $X_A$. 
\item $t(\mu_i)=t(\nu_i)$ for every $i$. 
\item $U=\bigcup_iU_{\mu_i,\nu_i}$ is a compact open $\G$-set 
satisfying $\alpha=\theta(U)$. 
\end{itemize}

Let us consider the simplest case, 
namely that $A$ is a $1\times1$ matrix $[n]$. 
The groupoid $C^*$-algebra $C^*_r(\G_{[n]})$ is 
the Cuntz algebra $\mathcal{O}_n$. 
V. V. Nekrashevych \cite[Proposition 9.6]{Ne04JOP} observed that 
the topological full group $[[\G_{[n]}]]$ is naturally isomorphic to 
the Higman-Thompson group $V_{n,1}$.  
The group $V_{n,r}$ is defined to be the group of 
all right continuous PL bijections $f:[0,r)\to[0,r)$ 
with finitely many singularities 
such that all singularities of $f$ are in $\Z[1/n]$, 
the derivative of $f$ at any non-singular point is $n^k$ for some $k\in\Z$ 
and $f$ maps $\Z[1/n]\cap[0,r)$ to itself. 
The isomorphism $\pi:[[\G_{[n]}]]\to V_{n,1}$ is described as follows. 
We identify the shift space $X_{[n]}$ with $\{0,1,\dots,n{-}1\}^\N$. 
Define a continuous map $F:X_{[n]}\to[0,1]$ by 
\[
F((x_k)_k)=\sum_{k=1}^\infty\frac{x_k}{n^k}. 
\]
Then we have $\pi(\alpha)\circ F=F\circ\alpha$ 
for any $\alpha\in[[\G_{[n]}]]$. 

For an SFT groupoid $\G_A$, 
$[[\G_A]]$ is thought of as a generalization of 
the Higman-Thompson group $V_{n,1}$. 
\end{example}

When $\alpha=\theta(U)$ is an element of $[[\G]]$, 
$1_U\in C_c(\G)$ can be thought of as a unitary in $C^*_r(\G)$. 
This unitary $1_U$ normalizes $C(\G^{(0)})$, 
namely $1_Uf1_U^*=f\circ\alpha$ holds for every $f\in C(\G^{(0)})$. 
Conversely, if $u\in C^*_r(\G)$ is a normalizer of $C(\G^{(0)})$ 
and $ufu^*=f\circ\alpha$, then $\alpha$ belongs to $[[\G]]$. 
Thus, we have the following. 

\begin{proposition}[{\cite[Proposition 5.6]{M12PLMS}}]
Suppose that $\G$ is an essentially principal \'etale groupoid 
whose unit space $\G^{(0)}$ is a Cantor set. 
There exists a natural short exact sequence 
\[
\begin{CD}
1@>>>U(C(\G^{(0)}))@>>>
N(C(\G^{(0)}),C^*_r(\G))@>\sigma>>[[\G]]@>>>1, 
\end{CD}
\]
where $N(C(\G^{(0)}),C^*_r(\G))$ denotes the group of 
unitary normalizers of $C(\G^{(0)})$ in $C^*_r(\G)$. 
Furthermore, the homomorphism $\sigma$ has a right inverse. 
\end{proposition}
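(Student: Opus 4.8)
The plan is to construct the short exact sequence explicitly and exhibit the right inverse of $\sigma$ by hand. First I would define the map $\sigma\colon N(C(\G^{(0)}),C^*_r(\G))\to[[\G]]$: given a unitary $u$ normalizing $C(\G^{(0)})$, conjugation by $u$ is an automorphism of $C(\G^{(0)})\cong C(\G^{(0)})$, hence by Gelfand duality is of the form $f\mapsto f\circ\alpha$ for a unique $\alpha\in\Homeo(\G^{(0)})$; set $\sigma(u)=\alpha$. The substance of the argument is the claim, already sketched in the paragraph preceding the statement, that this $\alpha$ actually lies in $[[\G]]$, i.e. that a normalizer of $C(\G^{(0)})$ implements a homeomorphism whose graph is a compact open $\G$-set. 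This is the content of the Weyl-groupoid/Cartan-subalgebra analysis of Renault (and Kumjian); one shows that the normalizer $u$ is supported, in an appropriate sense, on a compact open $\G$-set $U$ with $\theta(U)=\alpha$. I would cite \cite{R08Irish} and \cite[Theorem 5.1]{M12PLMS} for this identification rather than reprove it. That $\sigma$ is a group homomorphism is immediate from the computation $(uv)f(uv)^*=u(vfv^*)u^*$, which corresponds to composition of the induced homeomorphisms.

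Next I would identify the kernel. If $\sigma(u)=\id$, then $ufu^*=f$ for all $f\in C(\G^{(0)})$, so $u$ commutes with $C(\G^{(0)})$. Since $C(\G^{(0)})$ is a maximal abelian subalgebra of $C^*_r(\G)$ when $\G$ is essentially principal (again Renault's theorem, or \cite[Proposition 5.6]{M12PLMS} and surrounding discussion), its commutant inside $C^*_r(\G)$ is $C(\G^{(0)})$ itself; hence $u\in U(C(\G^{(0)}))$. Conversely every unitary of $C(\G^{(0)})$ commutes with $C(\G^{(0)})$ and thus induces $\id$, so $\ker\sigma=U(C(\G^{(0)}))$ and the inclusion $U(C(\G^{(0)}))\hookrightarrow N(C(\G^{(0)}),C^*_r(\G))$ makes the sequence exact at the middle term. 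Exactness on the left is the triviality of that inclusion, and exactness on the right is the surjectivity of $\sigma$, which is precisely the statement that every $\alpha\in[[\G]]$ is implemented by a normalizer.

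For surjectivity and the right inverse simultaneously, I would use the explicit candidate noted just before the proposition: for $\alpha=\theta(U)\in[[\G]]$ with $U$ the unique compact open $\G$-set, put $s(\alpha)=1_U\in C_c(\G)\subset C^*_r(\G)$. One checks that $1_U$ is a unitary (using $U U^{-1}=s(U)$... wait, $U^{-1}U=s(U)$ and $UU^{-1}=r(U)$ are clopen subsets of $\G^{(0)}$, so $1_U 1_U^*=1_{r(U)}$ and $1_U^*1_U=1_{s(U)}$; since $\alpha$ is a homeomorphism of all of $\G^{(0)}$ we have $r(U)=s(U)=\G^{(0)}$, giving $1_U^*1_U=1_U1_U^*=1_{\G^{(0)}}=1$), that it normalizes $C(\G^{(0)})$ with $1_U f 1_U^*=f\circ\alpha$ (a direct convolution computation), and that $s(\alpha\beta)=1_{U_\alpha U_\beta}=1_{U_\alpha}1_{U_\beta}=s(\alpha)s(\beta)$, so $s$ is a group homomorphism splitting $\sigma$. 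The main obstacle is none of these bookkeeping steps but rather the two structural facts I am importing from the theory of Cartan subalgebras: that $C(\G^{(0)})$ is maximal abelian in $C^*_r(\G)$ and that its normalizers are exactly the (continuous functions supported on) compact open $\G$-sets composed with unitaries of $C(\G^{(0)})$ — equivalently that $\sigma$ is well-defined and surjective. Since this is a survey, I would present the proof at the level of these reductions, pointing to Theorem~\ref{Renault} and \cite[Section 5]{M12PLMS} for the detailed verification, and spell out only the explicit splitting $\alpha\mapsto 1_U$.
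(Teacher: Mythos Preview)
Your proposal is correct and matches the paper's own treatment: the paper does not give a formal proof but only the sketch in the paragraph preceding the proposition (the map $\alpha\mapsto 1_U$ gives normalizers, and conversely normalizers implement elements of $[[\G]]$ by Renault's Cartan analysis), together with a citation to \cite[Proposition 5.6]{M12PLMS}. Your write-up fleshes out exactly those ingredients---Gelfand duality for the definition of $\sigma$, maximal abelianness of $C(\G^{(0)})$ for the kernel, and the explicit section $\alpha\mapsto 1_U$---so there is nothing to add.
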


Next, we would like to introduce the index map $I:[[\G]]\to H_1(\G)$. 

\begin{definition}[{\cite[Definition 7.1]{M12PLMS}}]
Let $\G$ be an essentially principal \'etale groupoid 
whose unit space is a Cantor set. 
For $\alpha\in[[\G]]$, 
a compact open $\G$-set $U$ satisfying $\alpha=\theta(U)$ uniquely exists. 
It is easy to see that $1_U$ is a $1$-cycle, 
i.e. $1_U\in\Ker\partial_1$ (see Section 3). 
We define a map $I:[[\G]]\to H_1(G)$ by $I(\alpha)=[1_U]$ 
and call it the index map. 
\end{definition}

It is easy to check that $I:[[\G]]\to H_1(\G)$ is a homomorphism. 
We write $[[\G]]_0=\Ker I$. 
For $\alpha=\theta(U)\in[[\G]]$, 
the element $1_U$ may be regarded as a unitary of $C^*_r(\G)$, 
and so we can think about its $K_1$-class $[1_U]\in K_1(C^*_r(\G))$. 
It is a natural open question to find a connection 
between $[1_U]\in K_1(C^*_r(\G))$ and $I(\alpha)\in H_1(\G)$. 

We made the following conjecture 
about abelianization $[[\G]]_\ab$ and homology groups $H_n(\G)$ 
in \cite[Section 2.3]{M15}. 

\begin{conjecture}[AH conjecture]\label{AH}
Let $\G$ be an essentially principal minimal \'etale groupoid 
whose unit space $\G^{(0)}$ is a Cantor set. 
Then there exists an exact sequence 
\[
\begin{CD}
H_0(\G)\otimes\Z_2@>>>[[\G]]_\ab@>I>>H_1(\G)@>>>0. 
\end{CD}
\]
Especially, if $H_0(\G)$ is $2$-divisible, 
then we have $[[\G]]_\ab\cong H_1(\G)$. 
\end{conjecture}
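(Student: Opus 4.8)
The homomorphism $I$ kills commutators and hence descends to $\bar I\colon[[\G]]_\ab\to H_1(\G)$. The plan has three parts: (a) prove $\bar I$ is surjective; (b) produce a natural homomorphism $j\colon H_0(\G)\otimes\Z_2\to[[\G]]_\ab$ with $\bar I\circ j=0$; (c) prove $\ker\bar I=\Ima j$. The final sentence of the conjecture is then automatic, since $H_0(\G)$ being $2$-divisible forces $H_0(\G)\otimes\Z_2=0$ and hence $\bar I$ an isomorphism.

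\emph{Surjectivity of $I$.} Given $\xi\in H_1(\G)$, pick a $1$-cycle $f\in\Ker\partial_1\subset C_c(\G,\Z)$ with $[f]=\xi$ and express $f$ as a finite integer combination of indicator functions of compact open $\G$-sets; after refining one may take these bisections pairwise disjoint, and, using minimality, one may translate all of their sources and ranges into a single proper clopen subset of $\G^{(0)}$, so the rearrangement they encode can be performed inside $\G^{(0)}$. The relation $\partial_1 f=0$ says exactly that the ``incoming'' and ``outgoing'' clopen sets balance as functions on $\G^{(0)}$, and this balancing lets one splice the given bisections and their inverses into the graph $W$ of a single $\alpha\in[[\G]]$ with $1_W-f\in\Ima\partial_2$; then $I(\alpha)=\xi$. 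The freedom to adjust $f$ by boundaries is what makes the bookkeeping go through.

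\emph{Construction of $j$.} For a compact open $\G$-set $U$ with $s(U)\cap r(U)=\emptyset$, let $\tau_U\in[[\G]]$ be the involution swapping $s(U)$ and $r(U)$ along $\theta(U)$ and fixing $\G^{(0)}\setminus(s(U)\cup r(U))$. One records: $\tau_{U\sqcup U'}=\tau_U\tau_{U'}$ when the four sets $s(U),r(U),s(U'),r(U')$ are disjoint, so $[\tau_{\,\cdot\,}]$ is additive; $\tau_U^2=1$, so $2[\tau_U]=0$ in $[[\G]]_\ab$; and, crucially, if $s(U)=s(U')$ and $r(U)=r(U')$ then $\tau_U\tau_{U'}^{-1}\in D([[\G]])$. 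The last point rests on the standard ``make room and conjugate'' technique: after shrinking one produces a bisection carrying $s(U)$ off itself and then exhibits $\tau_U\tau_{U'}^{-1}$ as a product of commutators, which is the mechanism already used to control $D([[\G]])$ in the known cases. Since every class in $H_0(\G)$ is of the form $[1_A]$ for a compact open $A\subset\G^{(0)}$, and — passing into a compact open subgroupoid of $\G$ in which $A$ becomes a union of equivalent clopen atoms — a ``parity element'' for $A$ (a product of transpositions whose sources partition $A$) has a class in $[[\G]]_\ab$ depending only on $[1_A]$, additivity and order $2$ yield a homomorphism $j\colon H_0(\G)\otimes\Z_2\to[[\G]]_\ab$; in favourable cases (e.g. when $\G$ contains an elementary subgroupoid $\K$ with $H_0(\K)\cong H_0(\G)$) this $j$ is simply $[[\K]]_\ab\to[[\G]]_\ab$ under $[[\K]]_\ab\cong H_0(\K)\otimes\Z_2$. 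Finally $\bar I\circ j=0$, because $I(\tau_U)=[1_U]+[1_{U^{-1}}]+[1_{\G^{(0)}\setminus(s(U)\cup r(U))}]$ and each summand is a $1$-boundary: indicator functions of clopen subsets of $\G^{(0)}$ lie in $\Ima\partial_2$, and $1_U+1_{U^{-1}}\in\Ima\partial_2$ via an explicit $2$-chain supported on pairs $(g,g^{-1})$.

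\emph{Exactness at $[[\G]]_\ab$, and the main obstacle.} It remains to prove $\ker\bar I\subseteq\Ima j$. Take $\alpha=\theta(U)\in[[\G]]$ with $I(\alpha)=0$, so $1_U=\partial_2 c$ for some $c\in C_c(\G^{(2)},\Z)$. Following the model of $\mathfrak{S}_n$ — where $\ker(\sgn)=A_n$ is generated by $3$-cycles and every permutation is a product of transpositions — one would like to decompose $\alpha$ as a product of elements, each supported on a clopen set and each either a transposition $\tau_{U_i}$ (its class lies in $\Ima j$) or a cycle of length $\geq3$ (a commutator, by direct computation, hence trivial in $[[\G]]_\ab$). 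The difficulty is that the $2$-chain $c$ has arbitrary integer coefficients and is not manifestly geometric, whereas the decomposition needs a nonnegative, bisection-supported reorganization of $\alpha$; a first reduction of $\alpha$ into ``cyclic'' pieces leaves behind holonomy terms whose indices must be cancelled using $c$. \textbf{The main obstacle is to show that the vanishing of $[1_U]$ in $H_1(\G)$ forces such a geometric decomposition with no residual term beyond the $H_0(\G)\otimes\Z_2$ contribution}, i.e. that $H_2(\G)$ and higher homology impose no further obstruction; this nonformal statement is exactly the content of the conjecture. I would attack it by induction on a complexity measure of $c$, or — when $\G$ is filtered by compact open subgroupoids, as for almost finite groupoids or SFT groupoids — by induction along the filtration, at each stage absorbing part of $c$ into a commutator and reducing to a strictly simpler chain until only transpositions survive. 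A more $C^*$-algebraic alternative would use the normalizer extension $1\to U(C(\G^{(0)}))\to N(C(\G^{(0)}),C^*_r(\G))\to[[\G]]\to1$ together with a de~la~Harpe--Skandalis-type determinant to relate $[[\G]]_\ab$ to $K_1(C^*_r(\G))$ and $K_0(C^*_r(\G))\otimes\Z_2$, but this relocates rather than dissolves the core difficulty.
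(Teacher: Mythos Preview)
The statement you are attempting to prove is a \emph{conjecture}: the paper does not contain a proof of it, and indeed presents it as open (Conjecture~\ref{AH}). Your proposal is therefore not to be compared against a proof in the paper, but against what is actually known, and on that score it does not close the gap. You yourself identify the main obstacle in part~(c) --- showing $\ker\bar I\subseteq\Ima j$ --- and then write that ``this nonformal statement is exactly the content of the conjecture.'' That is an honest admission, but it means your argument is not a proof: the inductive schemes you sketch (on a complexity measure of $c$, or along a filtration) are precisely the kinds of arguments that succeed only under additional hypotheses such as almost finiteness (Theorem~\ref{almstfnt/thm}(3)) or the structure present in SFT groupoids (Proposition~\ref{TR}, Example~\ref{SFT/AH}), and there is no reason to expect them to terminate for a general essentially principal minimal $\G$.

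Parts~(a) and~(b) are also not established in the generality you claim. Surjectivity of $I$ is proved in the paper only for almost finite groupoids (Theorem~\ref{almstfnt/thm}(2)) and purely infinite groupoids (Theorem~\ref{pi/thm}(2)); your sketch in~(a) --- ``splice the given bisections and their inverses into the graph $W$ of a single $\alpha$'' --- hides exactly the combinatorial work that those hypotheses make possible, and does not go through without them. For~(b), the paper explicitly remarks that the well-definedness of $j$ ``is not clear at all'' and defers to \cite{Ne15}; your claim that $\tau_U\tau_{U'}^{-1}\in D([[\G]])$ whenever $s(U)=s(U')$ and $r(U)=r(U')$ is one ingredient, but it does not by itself show that $[\tau_U]$ depends only on the $H_0$-class $[1_{s(U)}]$, which is what is required. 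In short, your outline correctly isolates the three pieces of the problem and matches the paper's description of the map $j$, but none of the three pieces is proved in full generality, and the third is the open conjecture.
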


Indeed, in many examples we can verify that 
there exists a short exact sequence 
\[
\begin{CD}
0@>>>H_0(\G)\otimes\Z_2@>j>>[[\G]]_\ab@>I>>H_1(\G)@>>>0. 
\end{CD}
\]
In such a case, 
we say that $\G$ satisfies the strong AH property. 
There exists $\G$ which does not satisfy the strong AH property 
(but the AH conjecture is still true). 
See \cite[Example 7.1]{Ne15}, \cite[Section 5.5]{M15} 
or Theorem \ref{prodSFT3} (4). 

The homomorphism $H_0(\G)\otimes\Z_2\to[[\G]]_\ab$ 
appearing in the AH conjecture is described as follows. 
Let $U$ be a compact open $\G$-set 
satisfying $r(U)\cap s(U)=\emptyset$. 
Define $\tau\in[[\G]]$ by 
\[
\tau(x)=\begin{cases}\theta(U)(x)&x\in s(U)\\
\theta(U^{-1})(x)&x\in r(U)\\x&\text{otherwise. }\end{cases}
\]
The homomorphism $H_0(\G)\otimes\Z_2\to[[\G]]_\ab$ sends 
the equivalence class of $1_{s(U)}\otimes\bar{1}$ to 
the equivalence class of $\tau$. 
Notice that it is not clear at all if this is really well-defined. 
One can find a proof of the well-definedness in \cite{Ne15}. 

\begin{example}\label{ExofAH}
\begin{enumerate}
\item Let $\G$ be an AF groupoid. 
In Example \ref{AF/tfg}, we have seen that 
$[[\G]]$ is an increasing union of 
the subgroups $G_k\cong\bigoplus_{v\in V_k}\mathfrak{S}_{\#E_v}$. 
Clearly $(G_k)_\ab$ is isomorphic to $\bigoplus_{v\in V_k}\Z_2$. 
So, $[[\G]]_\ab$ is an inductive limit of $\bigoplus_{v\in V_k}\Z_2$, 
and the connecting maps are given by the edge sets $E_k$. 
Hence $[[\G]]_\ab$ is isomorphic to $H_0(\G)\otimes\Z_2$ 
(see \cite[Section 3]{M06IJM}). 
In particular, the AF groupoid $\G$ has the strong AH property. 
\item Let $\G_\phi$ be the transformation groupoid 
associated with a minimal group action $\phi:\Gamma\curvearrowright X$. 
When $\Gamma$ is $\Z$, 
it was shown that $[[\G_\phi]]_\ab$ is isomorphic to 
$H_1(\G_\phi)\oplus(H_0(\G_\phi)\otimes\Z_2)$ (\cite[Section 4]{M06IJM}). 
Thus, $\G_\phi$ has the strong AH property. 
When $\Gamma$ is $\Z^N$, 
we can prove that the AH conjecture holds for $\G_\phi$ 
(see Theorem \ref{almstfnt/thm} (3)). 
But, we do not know 
whether or not $\G_\phi$ satisfies the strong AH property. 
For other group actions $\Gamma\curvearrowright X$, nothing is known. 
\item Let $\G_A$ be an SFT groupoid. 
It was proved that the abelianization $[[\G_A]]_\ab$ is isomorphic to 
$H_1(\G_A)\oplus(H_0(\G_A)\otimes\Z_2)$ (\cite[Corollary 6.24]{M15crelle}). 
Thus, $\G_A$ has the strong AH property. 
See Theorem \ref{SFT/thm}. 
\end{enumerate}
\end{example}

\section{Isomorphism theorem}

In this section, we discuss the following theorem. 

\begin{theorem}[{\cite[Theorem 3.10]{M15crelle}}]\label{isomorphism}
For $i=1,2$, let $\G_i$ be an essentially principal \'etale groupoid 
whose unit space is a Cantor set. 
Suppose that $\G_i$ is minimal. 
The following conditions are equivalent. 
\begin{enumerate}
\item $\G_1$ and $\G_2$ are isomorphic as \'etale groupoids. 
\item $[[\G_1]]$ and $[[\G_2]]$ are isomorphic as discrete groups. 
\item $[[\G_1]]_0$ and $[[\G_2]]_0$ are isomorphic as discrete groups. 
\item $D([[\G_1]])$ and $D([[\G_2]])$ are isomorphic as discrete groups. 
\end{enumerate}
\end{theorem}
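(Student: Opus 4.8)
The plan is to prove $(1)\Rightarrow(2),(3),(4)$ directly and to prove that each of $(2),(3),(4)$ implies $(1)$ by reconstructing the groupoid from the abstract group, using Rubin's theorem on groups of homeomorphisms together with the fact that an effective (i.e.\ essentially principal) \'etale groupoid is the groupoid of germs of its pseudogroup of compact open bisections. For the first part, an isomorphism $\Phi\colon\G_1\to\G_2$ of \'etale groupoids restricts to a homeomorphism $\phi=\Phi|_{\G_1^{(0)}}\colon\G_1^{(0)}\to\G_2^{(0)}$, carries compact open $\G_1$-sets to compact open $\G_2$-sets, and satisfies $\theta(\Phi(U))=\phi\circ\theta(U)\circ\phi^{-1}$; hence conjugation by $\phi$ is an isomorphism $[[\G_1]]\to[[\G_2]]$. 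Since $\Phi$ also induces an isomorphism $H_1(\G_1)\cong H_1(\G_2)$ intertwining the index maps (it sends $[1_U]$ to $[1_{\Phi(U)}]$), this isomorphism restricts to $[[\G_1]]_0\cong[[\G_2]]_0$; and any group isomorphism carries commutator subgroups to commutator subgroups. Thus $(1)$ implies each of $(2),(3),(4)$, and it remains to prove the three converses, which together give all the equivalences.

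For the converses, fix $i$ and let $G_i$ denote whichever of $[[\G_i]],\,[[\G_i]]_0,\,D([[\G_i]])$ is in play; in all three cases $D([[\G_i]])\subseteq G_i\subseteq[[\G_i]]$. The first step is to verify that $G_i$, acting on the Cantor set $\G_i^{(0)}$, satisfies the hypotheses of Rubin's reconstruction theorem — in particular that it is \emph{locally dense}: for every $x\in\G_i^{(0)}$ and every open $U\ni x$, the closure of $\{\gamma(x):\gamma\in G_i,\ \gamma|_{\G_i^{(0)}\setminus U}=\id\}$ has nonempty interior. Since $\G_i$ is minimal, for clopen $U$ the reduction $\G_i|U$ is again minimal, so every $\G_i$-orbit meets $U$ in a dense, hence infinite, subset. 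Given $y\in\G_i(x)\cap U$ with $y\ne x$, choose a third point $z\in\G_i(x)\cap U$, arrows $x\to y$ and $x\to z$ inside $\G_i|U$, and, after shrinking, compact open bisections $V,W$ with common source a small clopen $A\ni x$ and with $A,r(V),r(W)$ pairwise disjoint subsets of $U$; then the commutator $[\tau_V,\tau_W]$ of the associated transpositions lies in $D([[\G_i]])\subseteq G_i$, is supported inside $U$, and sends $x$ to $y$. Hence the displayed set contains $\G_i(x)\cap U$, which is dense in $U$. Checking also that $G_i$ acts nontrivially on every nonempty open set, and that $\G_i^{(0)}$ has no isolated points, Rubin's theorem produces a unique homeomorphism $\phi\colon\G_1^{(0)}\to\G_2^{(0)}$ with $\phi G_1\phi^{-1}=G_2$ implementing the given isomorphism.

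The second step reconstructs $\G_i$ from $G_i$ viewed inside $\Homeo(\G_i^{(0)})$. Let $\mathcal P_i$ be the pseudogroup of homeomorphisms between clopen subsets of $\G_i^{(0)}$ generated by the restrictions $\gamma|_A$ with $\gamma\in G_i$ and $A\subseteq\G_i^{(0)}$ clopen. One shows that $\mathcal P_i$ has exactly the same germs as the pseudogroup of all compact open $\G_i$-bisections: the inclusion of germs one way is clear, and for the other way, if $g\in\G_i$ is not an isotropy element then a small restriction of $\theta$ of a bisection through $g$ coincides with (a restriction of) a ``$3$-cycle'' in $D([[\G_i]])$ as in the previous paragraph, while if $g\in\G_{i,x}$ is an isotropy loop one writes $g=g'(g'')^{-1}$ with $g',g''$ non-isotropy arrows from $x$ to some other point of its orbit (possible since orbits are infinite), reducing to the previous case. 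Consequently $\G_i$ — being essentially principal, hence effective, with unit space a Cantor set — is isomorphic, as an \'etale groupoid, to the groupoid of germs of $\mathcal P_i$; here essential principality is used precisely to guarantee that $\theta$ is locally faithful, so that germs separate arrows. Since the passage $(\G_i^{(0)},G_i)\mapsto\mathrm{Germ}(\mathcal P_i)$ is canonical, the homeomorphism $\phi$ induces an isomorphism $\G_1\cong\G_2$. (Alternatively, one can use $\phi$ to build an isomorphism $C^*_r(\G_1)\to C^*_r(\G_2)$ carrying $C(\G_1^{(0)})$ onto $C(\G_2^{(0)})$ and appeal to Theorem \ref{Renault}.)

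The step I expect to be the main obstacle is the reconstruction of $\G_i$ in the presence of nontrivial isotropy. An element of $[[\G]]$ is only a homeomorphism of $\G^{(0)}$, and $[[\G]]$ contains none of the isotropy elements of $\G$, so a priori $[[\G]]$ might remember only the orbit equivalence relation of $\G$ rather than $\G$ itself. The resolution is to work with germs: two compact open bisections through distinct arrows $g\ne g'$ with the same source and range define different germs, and this genuinely depends on essential principality — an open subset of the isotropy bundle $\G'$ must consist of units, which is what prevents a nontrivial loop from having the same germ as a unit. Making this, together with the loop-factoring and the verification that the generated pseudogroup $\mathcal P_i$ already captures all germs, precise is where the real work lies; by comparison, checking Rubin's hypotheses, although it does require the commutator-subgroup refinement above, is comparatively routine.
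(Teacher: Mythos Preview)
Your proposal is correct and follows essentially the route the paper itself sketches: the paper reduces the nontrivial implications to a spatial realization proposition (any isomorphism between subgroups sandwiched between $D([[\G_i]])$ and $[[\G_i]]$ is induced by a homeomorphism of unit spaces), explicitly notes that this proposition is an immediate consequence of Rubin's theorem, and then says ``it is not so hard to see'' that the homeomorphism upgrades to a groupoid isomorphism. You take the Rubin route for spatial realization and fill in the ``not so hard'' step via the groupoid-of-germs reconstruction, which is the natural way to do it; the original proof in \cite{M15crelle} obtains the spatial realization by a Dye-style argument instead, but the paper treats the two as interchangeable.
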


It it clear that condition (1) implies the other conditions. 
The reverse implications are nontrivial, 
and the essential part of the proof is contained in the following proposition. 

\begin{proposition}[{\cite[Theorem 3.5, Proposition 3.6]{M15crelle}}]
For $i=1,2$, let $\G_i$ be an essentially principal \'etale groupoid 
whose unit space is a Cantor set. 
Suppose that $\G_i$ is minimal. 
For each $i=1,2$, let $\Gamma_i$ be a subgroup of $[[\G_i]]$ 
such that $D([[\G_i]])\subset\Gamma_i$. 
If there exists an isomorphism $\Phi:\Gamma_1\to\Gamma_2$, then 
there exists a homeomorphism $\phi:\G_1^{(0)}\to\G_2^{(0)}$ 
such that $\Phi(\alpha)=\phi\circ\alpha\circ\phi^{-1}$ 
for all $\alpha\in\Gamma_1$. 
\end{proposition}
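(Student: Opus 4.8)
The plan is to reconstruct the unit space $\G_i^{(0)}$, together with its topology, purely group-theoretically from $\Gamma_i$, and then show that the abstract isomorphism $\Phi$ is forced to be spatial. The natural object to recover $\G_i^{(0)}$ from is the \emph{Boolean algebra of clopen sets}, realized via supports of involutions: for $\alpha\in\Gamma_i$ of order $2$ whose support $\operatorname{supp}(\alpha)=\{x\mid\alpha(x)\neq x\}$ is clopen, this support is a group-theoretic invariant up to the action of the ambient group. More robustly, one can use the idea that points of $\G_i^{(0)}$ correspond to certain ultrafilters of subgroups: to each $x\in\G_i^{(0)}$ associate the family of ``local'' subgroups, e.g. the subgroups $\Gamma_i(V)=\{\alpha\in\Gamma_i\mid \operatorname{supp}(\alpha)\subset V\}$ as $V$ ranges over clopen neighborhoods of $x$. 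Because $D([[\G_i]])$ is simple and acts with enough ``local flexibility'' (one can find, for any nonempty clopen $V$, many elements of $D([[\G_i]])$ supported in $V$ — this uses minimality of $\G_i$ and the construction of the transpositions $\tau$ from Section 4), these families are rich enough to pin down the points.

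The key steps, in order, would be: (1) Show that membership of $\alpha\in\Gamma_i$ in a ``point stabilizer'' or the property ``$\operatorname{supp}(\alpha)\subset\operatorname{supp}(\beta)$'' can be characterized group-theoretically, e.g. via commutation: $\alpha$ and $\beta$ commute if their supports are disjoint, and finer containment relations can be read off from double commutants inside $D([[\G_i]])$. This gives an isomorphism of the underlying Boolean algebras $\mathcal{B}(\G_1^{(0)})\cong\mathcal{B}(\G_2^{(0)})$ compatible with $\Phi$. (2) By Stone duality, an isomorphism of these Boolean algebras yields a homeomorphism $\phi:\G_1^{(0)}\to\G_2^{(0)}$. (3) Verify intertwining: for $\alpha\in\Gamma_1$ and a clopen $V$, the set $\operatorname{supp}(\Phi(\alpha))$ must correspond under $\phi$ to $\operatorname{supp}(\alpha)$, and more precisely the action of $\Phi(\alpha)$ on clopen sets matches the $\phi$-conjugate of the action of $\alpha$; since a homeomorphism of a Cantor set is determined by its action on clopen sets, this forces $\Phi(\alpha)=\phi\circ\alpha\circ\phi^{-1}$. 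Throughout, the hypothesis $D([[\G_i]])\subset\Gamma_i$ is what guarantees $\Gamma_i$ has enough involutions and enough commuting pairs to see all of $\mathcal{B}(\G_i^{(0)})$; minimality ensures no clopen set is ``invisible.''

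The main obstacle I expect is step (1): extracting the Boolean algebra and, especially, the \emph{topology} (not just the set of points) in a way that is manifestly preserved by an arbitrary abstract isomorphism $\Phi$. One has to characterize, in first-order group-theoretic terms, both ``$\operatorname{supp}(\alpha)$ is nonempty clopen'' and the lattice operations, and then check that limits/neighborhood bases are captured — this is where the argument is delicate, because $\Gamma_i$ need not contain all of $[[\G_i]]$, so one cannot freely move clopen sets by arbitrary elements of the full group. The resolution is to work entirely inside the simple group $D([[\G_i]])$, where the relevant flexibility lemmas (abundance of elements with prescribed small support, and the fact that $D([[\G_i]])$ acts ``$n$-transitively enough'' on the clopen sets by the constructions in Section 4) are available and are automatically transported by $\Phi$ since $\Phi(D([[\G_1]]))=D([[\G_2]])$. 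Once the spatial realization $\phi$ is in hand, the verification $\Phi(\alpha)=\phi\alpha\phi^{-1}$ is then essentially formal.
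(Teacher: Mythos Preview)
Your outline is correct and matches the Dye--style reconstruction that the paper (following \cite{GPS99Israel,BM08Coll,Me11BLMS,M15crelle}) describes: recover the Boolean algebra of clopen supports from commutation relations inside $D([[\G_i]])$, apply Stone duality to obtain $\phi$, then check intertwining on clopen sets. The paper, however, also singles out a shorter route you do not mention: the proposition is an immediate consequence of Rubin's reconstruction theorem (\cite[Corollary 3.5]{Ru89TAMS}) once one verifies that any $\Gamma_i\supset D([[\G_i]])$ is \emph{locally dense} in $\Homeo(\G_i^{(0)})$ --- and this is precisely what your ``flexibility'' observations establish (minimality together with the transpositions $\tau$ give, for any $x$ and any clopen $U\ni x$, elements of $D([[\G_i]])$ supported in $U$ that move $x$ into a dense subset of $U$). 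Your direct approach is self-contained and makes the Boolean-algebra mechanism explicit; invoking Rubin treats the spatial realization as a black box and reduces the entire proof to the single local-density check.
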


This proposition says that 
any isomorphism between the groups $\Gamma_1$ and $\Gamma_2$ is 
spatially realized by a homeomorphism between the unit spaces. 
If we get a homeomorphism $\phi:\G_1^{(0)}\to\G_2^{(0)}$ 
such that $\Phi(\alpha)=\phi\circ\alpha\circ\phi^{-1}$, 
then it is not so hard to see that 
$\phi$ gives rise to an isomorphism from $\G_1$ to $\G_2$. 
Therefore we can prove 
the remaining implications of Theorem \ref{isomorphism}. 

In the setting of topological dynamical systems, 
Theorem \ref{isomorphism} was first proved 
by T. Giordano, I. F. Putnam and C. F. Skau \cite{GPS99Israel} 
for minimal $\Z$-actions. 
In order to obtain the spatial realization (see the proposition above), 
they imported the method of H. Dye, 
who proved the same isomorphism result 
for measure preserving ergodic actions on Lebesgue spaces. 
We remark that for minimal $\Z$-actions $\phi_1$ and $\phi_2$, 
$\G_{\phi_1}$ is isomorphic to $\G_{\phi_2}$ 
if and only if $\phi_1$ is flip conjugate to $\phi_2$ 
(\cite[Theorem 2.4]{GPS95crelle}). 
See Theorem \ref{Z/flip}. 
Later, 
similar results were obtained by 
S. Bezuglyi and K. Medynets \cite{BM08Coll} and by Medynets \cite{Me11BLMS}. 
The proof of Theorem \ref{isomorphism} given in \cite{M15crelle} is 
along the same line as these works. 

The proposition above can be also thought of 
as an immediate consequence of 
the following theorem of M. Rubin \cite{Ru89TAMS} 
(see also \cite[Section 9]{Brin04GeomD}, \cite[Section 3.3]{Ne15}). 
Let $X$ be a topological space. 
We say that a subgroup $\Gamma\subset\Homeo(X)$ is locally dense 
if for every $x\in X$ and every open set $U\subset X$ with $x\in U$, 
the closure of 
\[
\{f(x)\mid f\in\Gamma,\ f|(X\setminus U)=\id|(X\setminus U)\}
\]
has nonempty interior. 

\begin{theorem}[{\cite[Corollary 3.5]{Ru89TAMS}}] 
For $i=1,2$, let $X_i$ be a locally compact, Hausdorff topological spaces 
without isolated points and 
let $\Gamma_i\subset\Homeo(X_i)$ be subgroups. 
If $\Gamma_1$ and $\Gamma_2$ are isomorphic and are both locally dense, 
then for any isomorphism $\Phi:\Gamma_1\to\Gamma_2$ 
there exists a unique homeomorphism $\phi:X_1\to X_2$ 
such that $\Phi(\alpha)=\phi\circ\alpha\circ\phi^{-1}$ 
for all $\alpha\in\Gamma_1$. 
\end{theorem}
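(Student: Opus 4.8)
The plan is to deduce Rubin's theorem from the machinery it is usually proved with, namely the reconstruction of the topological space $X_i$ together with its action from the abstract group $\Gamma_i$, using only the lattice of pointwise stabilizers of open sets. First I would set up, for each $i$, the collection of subgroups of $\Gamma_i$ of the form $\Gamma_{i,U}=\{f\in\Gamma_i\mid f|(X_i\setminus U)=\id\}$, where $U$ ranges over open subsets of $X_i$. Local density is exactly the hypothesis that lets one recover, purely group-theoretically, which subgroups of $\Gamma_i$ arise in this way and how they are ordered: one shows that $\Gamma_{i,U}$ is characterized among subgroups of $\Gamma_i$ by a first-order-ish condition on centralizers and commutation (two such subgroups commute elementwise iff the open sets are disjoint up to nowhere dense pieces), so an isomorphism $\Phi:\Gamma_1\to\Gamma_2$ carries the family $\{\Gamma_{1,U}\}$ bijectively and order-isomorphically onto $\{\Gamma_{2,V}\}$. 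This is the heart of Rubin's argument and I expect it to be the main obstacle: making the intrinsic characterization of the $\Gamma_{i,U}$ genuinely group-theoretic, and checking that local density (rather than, say, full support) is enough to separate points.

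Next I would reconstruct the points. A point $x\in X_i$ should be recovered as an ultrafilter (or a suitably generic maximal filter) in the Boolean-algebra-like structure of these stabilizer subgroups modulo nowhere dense sets; local density guarantees there are enough elements of $\Gamma_i$ supported near $x$ to make this filter structure rich enough to pin down $x$ uniquely, and it guarantees that the regular open sets of $X_i$ are recovered, hence $X_i$ itself as the Stone-type space of that Boolean algebra, with its topology. Because $X_i$ is Hausdorff, locally compact and without isolated points, no pathology intervenes at this stage: distinct points give distinct filters, and every point of the reconstructed space is realized. Applying $\Phi$ and the established correspondence of stabilizer lattices then produces a bijection $\phi:X_1\to X_2$; one checks it is a homeomorphism by noting it sends regular open sets to regular open sets in both directions.

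Finally I would verify the conjugation formula and uniqueness. For $\alpha\in\Gamma_1$ and an open $U\subset X_1$, conjugation sends $\Gamma_{1,U}$ to $\Gamma_{1,\alpha(U)}$ inside $\Gamma_1$; applying $\Phi$ and using that $\Phi$ intertwines the two stabilizer correspondences shows that $\Phi(\alpha)$ acts on the reconstructed space $X_2$ exactly by transporting the action of $\alpha$ through $\phi$, i.e. $\Phi(\alpha)\circ\phi=\phi\circ\alpha$ on a dense (indeed all) of $X_1$, hence $\Phi(\alpha)=\phi\circ\alpha\circ\phi^{-1}$ everywhere by continuity and the fact that both sides are homeomorphisms. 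Uniqueness of $\phi$ follows because any homeomorphism implementing the same conjugation must agree with $\phi$ on the dense set of points that are "moved" by suitable elements of $\Gamma_1$, which by local density is all of $X_1$; two homeomorphisms agreeing on a dense set of a Hausdorff space coincide. The only genuinely delicate point throughout remains the first step — the intrinsic, conjugation-invariant description of the open-set stabilizers — and once that is in hand the rest is bookkeeping with filters and density.
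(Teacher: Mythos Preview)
The paper does not prove this theorem at all: it is quoted as a result of Rubin, with a citation to \cite[Corollary 3.5]{Ru89TAMS} (and pointers to expositions in \cite{Brin04GeomD} and \cite{Ne15}), and is used as a black box to deduce the spatial realization proposition preceding it. So there is no ``paper's own proof'' to compare your sketch against.

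That said, your outline is a faithful summary of the strategy behind Rubin's reconstruction theorem: encode the regular open sets of $X_i$ group-theoretically via the subgroups $\Gamma_{i,U}$ of elements supported in $U$, show that an abstract isomorphism $\Phi$ must carry this lattice to the corresponding lattice on the other side, recover points as suitable ultrafilters, and then read off $\phi$ and the conjugation formula. You are also right that the delicate step is the intrinsic characterization of the $\Gamma_{i,U}$, and that local density is precisely what makes it work. If you want to turn this into an honest proof rather than a sketch, that first step requires real care (Rubin's original argument is somewhat involved; the more accessible accounts in Brin's and Nekrashevych's papers are good places to look), but for the purposes of this survey paper none of that is expected --- the theorem is simply imported.
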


Recently, V. V. Nekrashevych \cite{Ne15} introduced two normal subgroups 
$\A(\G)\subset\S(\G)\subset[[\G]]$. 
Roughly speaking, 
$\S(\G)$ is the subgroup generated by all elements of order two, 
and $\A(\G)$ is the subgroup generated by all elements of order three 
(see \cite{Ne15} for the precise definitions). 
They are analogs of the symmetric and alternating groups. 
He proved that 
the same statement as Theorem \ref{isomorphism} is true 
for $\A(\G)$ and $\S(\G)$.

\section{Almost finite groupoids}

In this section, 
we list known and unknown properties of almost finite groupoids. 
Let us begin with the definition. 

\begin{definition}[{\cite[Definition 6.2]{M12PLMS}}]\label{almstfnt/def}
Let $\G$ be an essentially principal \'etale groupoid 
whose unit space is a Cantor set. 
We say that $\G$ is almost finite 
if for any compact subset $C\subset\G$ and $\ep>0$ 
there exists an elementary subgroupoid $\K\subset\G$ such that 
\[
\frac{\#(C\K x\setminus\K x)}{\#(\K x)}<\ep
\]
for all $x\in\G^{(0)}$. 
We also remark that $\#(\K(x))$ equals $\#(\K x)$, because $\K$ is principal. 
\end{definition}

I remark that 
the idea of the definition above has its origin 
in the work of F. Latr\'emoli\`ere and N. Ormes \cite{LaO12Rocky}. 
More precisely, 
the notion of almost finiteness was made 
so that the arguments of \cite{LaO12Rocky} proceed in an analogous way. 

AF groupoids (see Example \ref{AF/def}) are almost finite. 
Indeed, any compact subset $C\subset\G$ is contained 
in an elementary subgroupoid $\K\subset\G$. 
As the next proposition shows, 
there exist almost finite groupoids which are not AF. 

\begin{proposition}[{\cite[Lemma 6.3]{M12PLMS}}]\label{Z^N>almstfnt}
When $\phi:\Z^N\curvearrowright X$ is a free action of $\Z^N$ 
on a Cantor set $X$, 
the transformation groupoid $\G_\phi$ is almost finite. 
\end{proposition}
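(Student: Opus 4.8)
The plan is to produce, for a given compact set $C \subset \G_\phi$ and $\ep > 0$, an elementary subgroupoid $\K$ controlling the ratio in Definition \ref{almstfnt/def}. Since $\G_\phi = \Z^N \times X$ with the product topology, any compact subset $C$ is contained in $F \times X$ for some finite symmetric $F \subset \Z^N$ containing $0$; after enlarging, I may assume $F$ is a ``box'' $[-R,R]^N \cap \Z^N$. So it suffices to handle $C = F \times X$. The key geometric fact is that $\Z^N$ admits F{\o}lner sets that tile (e.g. large cubes $Q_L = [0,L)^N \cap \Z^N$), and for a free $\Z^N$-action on a Cantor set one can build a clopen Rokhlin tower decomposition: a clopen partition of $X$ indexed by finitely many ``base'' sets, over each of which the action looks like a translation by a large cube (or a shape built from large cubes). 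This is exactly the kind of Rokhlin-lemma input supplied by the work of Latr\'emoli\`ere--Ormes \cite{LaO12Rocky}, whose arguments the definition was designed to accommodate.

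Concretely, the first step is to invoke a clopen tower/castle decomposition: there exist clopen sets $B_1, \dots, B_k \subset X$ and finite shapes $S_1, \dots, S_k \subset \Z^N$, each $S_j$ a union of translates of a fixed large cube $Q_L$ (with $L$ chosen so large that $|F Q_L \setminus Q_L| / |Q_L| < \ep$, possible since $Q_L$ is F{\o}lner), such that $\{\phi^g(B_j) : 1 \le j \le k,\ g \in S_j\}$ is a clopen partition of $X$, and $\phi^g(B_j) \cap \phi^{g'}(B_j) = \emptyset$ for $g \neq g'$ in $S_j$. The second step is to define $\K \subset \G_\phi$ as the equivalence relation whose classes are exactly the tower columns: $(\phi^g(x), \phi^{g'}(x)) \in \K$ for $x \in B_j$ and $g, g' \in S_j$, with the topology inherited from $X \times X$. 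Because the $B_j$ and the shapes are clopen and finite, $\K$ is a compact open principal subgroupoid with $\K^{(0)} = X$, i.e. an elementary subgroupoid. The third step is the counting estimate: for $x \in X$, the orbit $\K x$ is the column $\{\phi^g(y) : g \in S_j\}$ through the unique base point $y \in B_j$ with $x = \phi^{h}(y)$, $h \in S_j$; then $C\K x = (F \times X)\K x$ consists of the points $\phi^{g'}(\phi^g(y)) = \phi^{g'+g}(y)$ with $g \in S_j$, $g' \in F$, and the ``bad'' set $C\K x \setminus \K x$ injects into $(F + S_j) \setminus S_j$. Since $S_j$ is a disjoint union of translates of $Q_L$, one has $|(F + S_j)\setminus S_j| \le (\text{number of cubes in } S_j)\cdot|FQ_L \setminus Q_L|$, hence $|C\K x \setminus \K x| / |\K x| \le |FQ_L \setminus Q_L|/|Q_L| < \ep$ uniformly in $x$.

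The main obstacle is the first step: producing the clopen cube-tiled Rokhlin decomposition for an arbitrary free $\Z^N$-action on the Cantor set, uniformly over $x$. For $N = 1$ this is the classical Kakutani--Rokhlin partition and is elementary; for general $N$ it requires the ``tiling'' technique for $\Z^N$ (one cannot tile $\Z^N$ by cubes alone while respecting the dynamics, so one allows finitely many shapes, each itself a near-cube or a union of cubes, which is why the $S_j$ above are taken to be unions of translates of $Q_L$), and this is precisely where the Latr\'emoli\`ere--Ormes machinery enters. Once that decomposition is in hand, everything else is bookkeeping with F{\o}lner estimates. I would therefore structure the proof as: (i) reduce $C$ to a cube $F = [-R,R]^N \cap \Z^N$; (ii) choose $L$ with $|FQ_L \setminus Q_L|/|Q_L| < \ep$; (iii) cite/apply the clopen Rokhlin decomposition with shapes tiled by $Q_L$; (iv) define $\K$ as the column equivalence relation and check it is elementary; (v) do the uniform counting estimate. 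The only genuinely nontrivial ingredient, step (iii), is available from \cite{LaO12Rocky} and is the reason the definition of almost finiteness was phrased as it is.
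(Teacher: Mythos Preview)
The survey does not prove this proposition; it simply cites \cite[Lemma~6.3]{M12PLMS}. Your outline---reduce $C$ to a box $F\times X$, produce a clopen Kakutani--Rokhlin castle with F{\o}lner shapes, take $\K$ to be the column equivalence relation, and do the boundary count---is exactly the standard argument and is the one carried out in the cited reference. The paper's remark that almost finiteness was formulated so that the arguments of \cite{LaO12Rocky} go through confirms you are on the intended track.

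Two small cautions. First, the statement is for \emph{free} (not necessarily minimal) actions, whereas \cite{LaO12Rocky} concerns minimal free systems; for step~(iii) you should rather invoke the marker/Voronoi construction used in \cite{M12PLMS} itself (freeness plus compactness yields a clopen marker set that is $Q_L$-separated and syndetic, then tile), which does not need minimality. Second, the shapes $S_j$ produced by that construction are not literally disjoint unions of translates of $Q_L$; what one actually gets is that each $S_j$ is itself $(F,\ep)$-invariant, i.e.\ $\lvert (F+S_j)\setminus S_j\rvert/\lvert S_j\rvert<\ep$, which gives your counting estimate directly without the intermediate cube-tiling step. With those adjustments your proof is correct and matches the original.
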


Definition \ref{almstfnt/def} may remind the reader 
of the F\o lner condition for amenable groups. 
It may be natural to expect that 
transformation groupoids arising from free actions of amenable groups 
are almost finite. 
This is an important open question. 
In fact, X. Li recently proved that the converse is true. 

\begin{proposition}[{\cite{Li}}]
Let $\phi:\Gamma\curvearrowright X$ be a topologically free action 
of a discrete countable group $\Gamma$ on a Cantor set $X$. 
If $\G_\phi$ is almost finite, then $\Gamma$ is amenable. 
\end{proposition}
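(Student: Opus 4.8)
The plan is to deduce amenability of $\Gamma$ from the F\o lner-type condition packaged inside the definition of almost finiteness, by choosing a cleverly chosen compact set $C$ and reading off a F\o lner set for $\Gamma$ from an elementary subgroupoid $\K$. Concretely, fix a finite symmetric set $S\subset\Gamma$ and $\ep>0$; I want to produce a finite $F\subset\Gamma$ with $\#(SF\setminus F)/\#F<\ep'$. The natural candidate for the compact subset of $\G_\phi$ is $C=\{(\gamma,x)\mid\gamma\in S,\ x\in X\}=S\times X$, which is compact open. Almost finiteness then gives an elementary subgroupoid $\K\subset\G_\phi$ with $\#(C\K x\setminus\K x)/\#(\K x)<\ep$ for every $x\in X$. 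The point is that $\K x$, the $\K$-orbit of the unit $x$, sits inside the $\G_\phi$-orbit of $x$, which (because $\phi$ is topologically free, so $\G_\phi$ is essentially principal, and in particular the isotropy is trivial on a dense set) is in bijection with a subset of $\Gamma$ via $\gamma\mapsto\phi^\gamma(x)$. So for a suitable $x$ one pulls $\K x$ back to a finite set $F_x\subset\Gamma$, and the inequality $\#(C\K x\setminus\K x)/\#(\K x)<\ep$ translates, after unwinding the groupoid multiplication $C\K x=(S\times X)\K x$, into $\#(SF_x\setminus F_x)/\#F_x<\ep$. Since $S$ and $\ep$ were arbitrary, $\Gamma$ satisfies the F\o lner condition, hence is amenable.

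The steps, in order, are: (i) reduce to checking the F\o lner condition for an arbitrary finite symmetric $S\ni 1$ and arbitrary $\ep>0$; (ii) form $C=S\times X\subset\G_\phi$, note it is compact, and apply Definition~\ref{almstfnt/def} to obtain an elementary subgroupoid $\K$; (iii) fix a base point $x\in X$ whose isotropy in $\G_\phi$ is trivial (such $x$ exist and are dense since $\phi$ is topologically free), and set $F=F_x:=\{\gamma\in\Gamma\mid(\gamma,x)\in r^{-1}(\K x)\cap\cdots\}$ — more precisely identify $\K x\subset r^{-1}(x)$-target-fiber with the corresponding $\Gamma$-translates of $x$, so $\K x=\{\phi^\gamma(x)\mid\gamma\in F\}$ with $\#F=\#(\K x)$; (iv) compute $C\K x$: an element of $C\K x$ has the form $(\sigma,\cdot)\cdot(\gamma,x)$ with $\sigma\in S$, $\phi^\gamma(x)=r(\cdot)$, giving target $\phi^{\sigma\gamma}(x)$, so $r(C\K x)=\{\phi^{\sigma\gamma}(x)\mid\sigma\in S,\gamma\in F\}$, which under the bijection is $SF$; (v) conclude $\#(SF\setminus F)<\ep\cdot\#F$ from the defining inequality applied at $x$, and invoke the F\o lner characterization of amenability.

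The main obstacle I expect is step (iii)–(iv): making the identification of $\K x$ with a subset of $\Gamma$ completely rigorous. The orbit map $\gamma\mapsto\phi^\gamma(x)$ is a bijection onto $\G_\phi(x)$ only when the isotropy group $(\G_\phi)_x$ is trivial, which holds for $x$ in a dense $G_\delta$ but not necessarily everywhere; one must check that the defining inequality of almost finiteness, being a pointwise statement valid for \emph{all} $x\in\G^{(0)}$, can be applied at such a good $x$, and that $\#(\K x)$ there really equals $\#F$. One also has to be careful that $C\K x$ and $\K x$ are computed as subsets of the correct fiber ($s^{-1}(x)$ or $r^{-1}(x)$) and that left multiplication by $C=S\times X$ corresponds on the $\Gamma$-side to left multiplication by $S$ — a bookkeeping point about the groupoid convention $(\gamma,\phi^{\gamma'}(x'))\cdot(\gamma',x')=(\gamma\gamma',x')$ from Example~\ref{transf/def}. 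A secondary, more technical subtlety is that the elementary subgroupoid $\K$ has $\K^{(0)}=\G^{(0)}=X$ and is only \emph{uniformly finite}, not necessarily contained in $C$; but uniform finiteness is exactly what guarantees each $F_x$ is finite, so this is a feature rather than a bug. Once these identifications are nailed down the rest is a direct translation, so the heart of the argument is purely the dictionary between $\K$-orbits in the transformation groupoid and F\o lner sets in $\Gamma$.
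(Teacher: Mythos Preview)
Your approach is correct and is presumably close to Li's; the paper itself does not supply a proof but merely cites \cite{Li} (in preparation), so there is nothing to compare against.

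One simplification: your worry about isotropy in steps (iii)--(iv) is unnecessary, and you have in fact muddled arrows with their ranges there (writing ``$\K x=\{\phi^\gamma(x)\mid\gamma\in F\}$'' confuses $\K x$ with $\K(x)=r(\K x)$). Both $\K x$ and $C\K x$ are sets of \emph{arrows} in the source fiber $s^{-1}(x)=\Gamma\times\{x\}$, and that fiber is in canonical bijection with $\Gamma$ for \emph{every} $x\in X$, regardless of isotropy. Writing $\K x=\{(\gamma,x)\mid\gamma\in F\}$ for a finite $F\subset\Gamma$ (finite because $\K$ is elementary, nonempty because $\K^{(0)}=X$ forces $1\in F$), the multiplication rule of Example~\ref{transf/def} gives
\[
C\K x=\{(\sigma,\phi^\gamma(x))\cdot(\gamma,x)\mid\sigma\in S,\ \gamma\in F\}
=\{(\sigma\gamma,x)\mid\sigma\in S,\ \gamma\in F\}
=(SF)\times\{x\},
\]
so $\#(C\K x\setminus\K x)/\#(\K x)=\#(SF\setminus F)/\#F$ on the nose, for any $x$ whatsoever. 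No passage to ranges and no choice of a base point with trivial isotropy is needed. The topological freeness hypothesis enters only to make $\G_\phi$ essentially principal, so that Definition~\ref{almstfnt/def} applies in the first place.
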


It is also an interesting problem 
to compare the notion of almost finite groupoids with almost AF groupoids, 
which were introduced by N. C. Phillips \cite{Ph05CMP}. 

For a $\G$-invariant probability measure $\mu\in M(\G)$, 
we can define a homomorphism $\hat\mu:H_0(\G)\to\R$ by 
\[
\hat\mu([f])=\int f\,d\mu. 
\]
It is clear that 
$\hat\mu([1_{\G^{(0)}}])=1$ and $\hat\mu(H_0(\G)^+)\subset[0,\infty)$. 
Thus $\hat\mu$ is a state on $(H_0(\G),H_0(\G)^+,[1_{\G^{(0)}}])$. 
It is also easy to see that the map $\mu\mapsto\hat\mu$ gives 
an isomorphism from $M(\G)$ to the state space. 

\begin{theorem}[{\cite[Theorem 3.4]{M15}}]\label{almstfnt/H0}
Let $\G$ be a minimal almost finite groupoid. 
\begin{enumerate}
\item $(H_0(\G),H_0(\G)^+)$ is a simple, weakly unperforated, 
ordered abelian group with the Riesz interpolation property. 
\item The homomorphism $\rho:H_0(\G)\to\Aff(M(\G))$ 
defined by $\rho([f])(\mu)=\hat\mu([f])$ has uniformly dense range, 
where $\Aff(M(\G))$ denotes 
the space of $\R$-valued affine continuous functions on $M(\G)$. 
\end{enumerate}
\end{theorem}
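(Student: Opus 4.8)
The plan is to derive both statements from the defining F\o lner-type condition of almost finiteness together with the structure theory of ordered abelian groups arising as scaled dimension groups. The key observation is that an almost finite groupoid $\G$ can be exhausted, in an approximate sense, by its elementary subgroupoids, and for an elementary subgroupoid $\K$ the reduced $C^*$-algebra $C^*_r(\K)$ is a finite-dimensional-building-block AF algebra whose $H_0$ is a finitely generated free abelian group with the obvious simplicial order. So morally $H_0(\G)$ behaves like an inductive limit of such groups, and one wants to push through the standard Effros--Handelman--Shen style arguments.

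First I would establish (1). Simplicity of $(H_0(\G),H_0(\G)^+)$: given a nonzero positive element $[f]$, one uses minimality of $\G$ to cover $\G^{(0)}$ by finitely many translates of the clopen set where $f$ is supported via compact open $\G$-sets, so that some positive multiple of $[f]$ dominates $[1_{\G^{(0)}}]$; hence there are no nontrivial order ideals. For the Riesz interpolation property and weak unperforation, the idea is to fix the finite data (the elements and inequalities to be interpolated, or the element $[f]$ and integer $n$ with $n[f]>0$) and choose, using almost finiteness applied to a sufficiently large compact set $C\subset\G$ carrying representatives of that data, an elementary subgroupoid $\K$ such that all the relevant functions are, up to a controlled error, constant along $\K$-orbits; then the problem is reduced to the corresponding statement for the simplicial ordered group $H_0(\K)\cong\bigoplus\Z$ with its coordinatewise order, where interpolation and unperforation are trivial. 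The $\ep$-error in the definition of almost finiteness must be handled by a rounding/absorption argument: one shows that the small discrepancy on a set of small measure can be corrected using a further piece of the groupoid, exactly as in the dynamical constructions of Latr\'emoli\`ere--Ormes that motivated the definition. I expect this reduction step --- making the bookkeeping of the $\ep$-error rigorous so that the finite-dimensional approximation genuinely transfers the order-theoretic conclusions to $H_0(\G)$ --- to be the main obstacle.

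For (2), density of the range of $\rho:H_0(\G)\to\Aff(M(\G))$, I would argue as follows. By the identification $M(\G)\cong$ state space of $(H_0(\G),H_0(\G)^+,[1_{\G^{(0)}}])$ already noted in the excerpt, and by part (1), $(H_0(\G),H_0(\G)^+)$ is a simple weakly unperforated ordered group with order unit satisfying Riesz interpolation; such groups are exactly (after tensoring issues are dealt with) the ones for which the image of $\rho$ is uniformly dense in $\Aff$ of the state space --- this is a version of the classical theorem of Goodearl--Handelman on dimension groups, which I would invoke or reprove via a Hahn--Banach separation argument: if some $a\in\Aff(M(\G))$ were bounded away from the image, one separates it by a state, contradicting that states are already realized by invariant measures and that positive elements of $H_0(\G)$ evaluate densely near their infimum over $M(\G)$ (again using simplicity to get, for any $\ep>0$, an element $[f]$ with $\hat\mu([f])$ within $\ep$ of a prescribed target uniformly in $\mu$). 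Thus (2) follows formally once (1) is in hand, and the real content of the theorem is the finite-dimensional approximation underlying (1).
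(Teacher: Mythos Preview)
The paper does not prove this theorem; it is quoted from \cite{M15} without argument, so there is no proof in the present text to compare against. Your overall strategy for (1) --- using the F\o lner-type approximation by elementary subgroupoids to reduce order-theoretic statements about $H_0(\G)$ to the simplicial groups $H_0(\K)\cong\Z^{\{\K\text{-orbits}\}}$ --- is the standard route and is essentially what is carried out in the source; your identification of the main difficulty (making the $\ep$-boundary error rigorous so that positivity and interpolation genuinely transfer) is accurate.

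For (2), however, the claim that density of $\rho$ follows ``formally'' from (1) via a Goodearl--Handelman type theorem is not correct as stated. The implication ``simple weakly unperforated interpolation group with order unit $\Rightarrow$ dense image in $\Aff$ of the state space'' fails in general: $(\Z,\Z_{\geq0},1)$ satisfies everything in (1) but has image $\Z\subset\R=\Aff(\text{pt})$, which is not dense. The abstract result you have in mind requires the group to be non-cyclic (and is usually formulated for dimension groups, i.e.\ unperforated rather than merely weakly unperforated, so you would also need to note why that holds here). In the present situation $H_0(\G)\not\cong\Z$ because $\G^{(0)}$ is a Cantor set and every $\mu\in M(\G)$ is non-atomic, hence clopen sets attain arbitrarily small positive $\mu$-values; this must be said explicitly. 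Alternatively --- and closer in spirit to the original argument --- one can prove (2) directly: given $a\in\Aff(M(\G))$ and $\ep>0$, choose an elementary $\K$ with sufficiently many large $\K$-orbits that an integer-valued $\K$-invariant function can be built whose orbit averages approximate $a$ uniformly on $M(\G)\subset M(\K)$. Either way, (2) needs a short additional step beyond (1); it is not a purely formal consequence.
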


For topological full groups of almost finite groupoids, 
the following are known. 

\begin{theorem}[{\cite{M12PLMS,M15crelle,M15}}]\label{almstfnt/thm}
Let $\G$ be an almost finite groupoid. 
\begin{enumerate}
\item If $\G$ is minimal, 
then the commutator subgroup $D([[\G]])$ is simple. 
\item The index map $I:[[G]]\to H_1(\G)$ is surjective. 
\item If $\G$ is principal and minimal, then 
the AH conjecture holds for $\G$. 
\end{enumerate}
\end{theorem}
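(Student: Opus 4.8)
The plan is to prove the three assertions of Theorem~\ref{almstfnt/thm} in the order (2), (1), (3), since (2) and (1) feed into (3).

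\textbf{Surjectivity of the index map.}
First I would establish (2). Given a class $[f]\in H_1(\G)$ with $f\in\Ker\partial_1$, $f\in C_c(\G^{(1)},\Z)$, the goal is to find $\alpha\in[[\G]]$ with $I(\alpha)=[f]$. Since compactly supported integer-valued functions on $\G^{(1)}=\G$ are finite $\Z$-linear combinations of characteristic functions of compact open $\G$-sets, I can write $f=\sum_j n_j 1_{U_j}$ with each $U_j$ a compact open $\G$-set, and after refining the $U_j$'s I may assume $f=1_U-1_V$ for compact open $\G$-sets $U,V$ (allowing the $U_j$'s to be disjoint and repeated). The cycle condition $\partial_1 f=0$ says $r_*(1_U)-s_*(1_U)=r_*(1_V)-s_*(1_V)$ in $C_c(\G^{(0)},\Z)$. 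The key step is a \emph{cutting and matching} argument using almost finiteness together with Theorem~\ref{almstfnt/H0}: because $(H_0(\G),H_0(\G)^+)$ is simple, weakly unperforated with Riesz interpolation and dense range in $\Aff(M(\G))$, one can chop $U$ and $V$ into small compact open $\G$-sets and rearrange the corresponding sources/ranges so that the resulting $\G$-set becomes a \emph{graph of a homeomorphism} $\alpha$, i.e. a compact open $\G$-set $W$ with $r(W)=s(W)=\G^{(0)}$; and during the rearrangement the homology class $[1_W]$ stays equal to $[f]$ since every modification changes $1_W$ by a boundary $\partial_2(\cdot)$. Concretely one uses that an elementary subgroupoid $\K$ with orbits much larger than the supports involved lets one absorb the defect $1_U-1_V$ into permutations of $\K$-orbits, and permutations of a finite set give trivial $H_1$. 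So $\alpha\in[[\G]]$ with $I(\alpha)=[f]$.

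\textbf{Simplicity of $D([[\G]])$.}
For (1), assuming $\G$ minimal, I would follow the strategy that goes back to the commutator-subgroup arguments for Thompson-type groups: show that $[[\G]]$ (or rather $[[\G]]_0$) acts on $\G^{(0)}$ in a \emph{sufficiently rich} way, prove a normal-subgroup/commutator-generation lemma, and deduce simplicity of the commutator subgroup. The two ingredients are: (i) for any nonempty clopen $Y\subsetneq\G^{(0)}$, using minimality of $\G$ one can move $Y$ off itself by an element of $[[\G]]$, and in fact translate it inside any other nonempty clopen set; (ii) any nontrivial element $\alpha$ of a normal subgroup $N\trianglelefteq D([[\G]])$ moves some clopen set $Y$ with $\alpha(Y)\cap Y=\emptyset$, and then commutators $[\alpha,\beta]$ with $\beta$ supported in $Y$ produce a large supply of elements supported near $Y$; combined with (i) and a partition-of-unity-type argument one shows $N$ contains all ``transpositions'' $\tau$ of the form appearing in the AH discussion, hence all of $D([[\G]])$. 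The almost finiteness hypothesis enters to guarantee that one can find, inside any clopen set, elementary subgroupoids whose orbits realize the needed small permutations. I expect this is essentially the argument of \cite{M12PLMS,M15crelle}; the role of ``minimal'' is exactly to get the transitivity in (i).

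\textbf{The AH conjecture for principal minimal almost finite $\G$.}
For (3) I would combine (1), (2), and almost finiteness. By (2), $I:[[\G]]\to H_1(\G)$ is onto, so $[[\G]]_\ab\to H_1(\G)\to 0$ is exact, and it remains to identify $\Ker(\bar I)$ where $\bar I:[[\G]]_\ab\to H_1(\G)$ is the induced map, i.e. to show $[[\G]]_0/(D([[\G]])\cap\text{?})$ — more precisely $[[\G]]_0 D([[\G]])/D([[\G]])$ — is a quotient of $H_0(\G)\otimes\Z_2$ via the transposition map $\tau\mapsto[1_{s(U)}\otimes\bar1]$ described after Conjecture~\ref{AH}. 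The plan: (a) show every element of $[[\G]]_0$ is, modulo $D([[\G]])$, a product of the basic transpositions $\tau$ — this uses an approximation by elementary subgroupoids (almost finiteness) to reduce an arbitrary index-zero element to a permutation of $\K$-orbits, together with the fact that in a symmetric group $\mathfrak S_n$ the abelianization is $\Z_2$ detected by the sign, so only the parity of each orbit-permutation survives; (b) check that $\tau\mapsto 1_{s(U)}\otimes\bar1$ descends to a well-defined surjection $H_0(\G)\otimes\Z_2\to\Ker(\bar I)$ — here I use that $[1_U]=[1_{s(U)}]=[1_{r(U)}]$ in $H_0(\G)$ since $U$ is a $\G$-set, principality of $\G$ to ensure $r(U)\cap s(U)=\emptyset$ can always be arranged on a cofinal family, and the relations $\tau^2\in D([[\G]])$ and $\tau_U\tau_V\equiv\tau_{U\sqcup V}$ mod $D([[\G]])$. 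This yields the exact sequence $H_0(\G)\otimes\Z_2\to[[\G]]_\ab\xrightarrow{I}H_1(\G)\to0$.

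\textbf{Main obstacle.}
The hard part will be step (a) of the AH argument — controlling an arbitrary index-zero element of $[[\G]]$ modulo commutators. Almost finiteness gives an elementary $\K$ approximating the support of $\alpha$ in the counting norm, but turning ``$\alpha$ is $\ep$-close to a $\K$-element'' into an \emph{exact} equality $\alpha\equiv(\text{product of transpositions})\pmod{D([[\G]])}$ requires a careful induction showing the error term can be successively pushed into $D([[\G]])$, and the combinatorics of matching up orbits of $\K$ with the partition defining $\alpha$ — essentially a Rokhlin-tower bookkeeping — is where all the real work lies. Establishing the well-definedness in step (b) (equivalently, that the map does not depend on the choice of $U$ representing a given $H_0$-class tensored with $\bar1$) is also delicate, but as noted after Conjecture~\ref{AH} this can be quoted from \cite{Ne15}.
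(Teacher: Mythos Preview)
Your outlines for (1) and (3) are broadly in the spirit of the arguments in \cite{M12PLMS,M15crelle,M15}, and the ``main obstacle'' you flag for (3) is the right one. But there is a genuine gap in your treatment of (2), and since you feed (2) into (3), this matters.

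First, in (2) you invoke Theorem~\ref{almstfnt/H0} (simplicity, weak unperforation, Riesz interpolation, density in $\Aff(M(\G))$) to drive your cutting-and-matching argument. That theorem requires $\G$ to be \emph{minimal}, whereas part (2) has no minimality hypothesis. So as written your argument does not prove (2); at best it proves a weaker statement under an extra assumption. Second, your reduction ``I may assume $f=1_U-1_V$ for compact open $\G$-sets $U,V$'' is not correct: even after making the supports of the summands disjoint, the union of the positively-signed pieces need not be a $\G$-set, since their sources (or ranges) can overlap. You only get $f=\sum_i 1_{U_i}-\sum_j 1_{V_j}$ with the $U_i$'s pairwise disjoint and the $V_j$'s pairwise disjoint as subsets of $\G$, nothing more.

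The argument that actually works (this is what \cite{M12PLMS} does) bypasses $H_0$ entirely and uses almost finiteness directly. One takes the compact support $C$ of a $1$-cycle $f$, chooses an elementary subgroupoid $\K$ with $\#(C\K x\setminus\K x)/\#(\K x)$ very small, and then works \emph{orbit by orbit} in $\K$: on each $\K$-orbit $\K(x)$ the function $f$ restricts (up to the small boundary error) to an integer function on a finite groupoid, and the $1$-cycle condition forces it, modulo boundaries, to come from a permutation of $\K(x)$. Patching these permutations over the (finitely many) $\K$-orbit types produces $\alpha\in[[\G]]$ with $I(\alpha)=[f]$. The point is that the F{\o}lner-type inequality in Definition~\ref{almstfnt/def} does the job that you were trying to get out of the order structure on $H_0$, and it does so without minimality.
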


V. V. Nekrashevych \cite{Ne15} recently proved that 
if an almost finite groupoid $\G$ is minimal and expansive, 
then $D([[\G]])$ is finitely generated 
(see \cite{Ne15} for the definition of expansive groupoids). 

In general, it is not known 
if every almost finite groupoid $\G$ satisfies the strong AH property or not. 
In other words, we cannot prove that 
the homomorphism $j:H_0(\G)\otimes\Z_2\to[[\G]]_\ab$ is always injective, 
and cannot find an example such that the map $j$ has nontrivial kernel. 
However, for a minimal free action $\phi:\Z^N\curvearrowright X$, 
one can prove that the kernel of $j$ is at least `contained' 
in the infinitesimal subgroup of $H_0(\G_\phi)$, which is defined by 
\[
\Inf(H_0(\G_\phi))
=\left\{[f]\in H_0(\G_\phi)\mid
\int f\,d\mu=0\quad\forall\mu\in M(\G_\phi)\right\}. 
\]

\begin{proposition}[{\cite[Proposition 3.7]{M15}}]
Let $\phi:\Z^N\curvearrowright X$ be a minimal free action of $\Z^N$ 
on a Cantor set $X$. 
The kernel of the homomorphism $j:H_0(\G_\phi)\otimes\Z_2\to[[\G_\phi]]_\ab$ 
is contained in $\Inf(H_0(\G_\phi))\otimes\Z_2$. 
In particular, when $\Inf(H_0(\G_\phi))\otimes\Z_2$ is trivial, 
$\G_\phi$ has the strong AH property. 
\end{proposition}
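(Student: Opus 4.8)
The plan is to understand precisely how the homomorphism $j\colon H_0(\G_\phi)\otimes\Z_2\to[[\G_\phi]]_\ab$ is built, and then to show that an element coming from an infinitesimal-free class is forced to be trivial in the abelianization. Recall from the discussion in Section~4 that $j$ sends the class of $1_{s(U)}\otimes\bar1$ (for a compact open $\G_\phi$-set $U$ with $r(U)\cap s(U)=\emptyset$) to the class of the ``double transposition'' $\tau_U\in[[\G_\phi]]$ swapping $s(U)$ and $r(U)$ via $\theta(U)$. So the kernel of $j$ consists of $\Z_2$-combinations of such $1_{s(U)}$ whose $\tau_U$ lies in $D([[\G_\phi]])$. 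First I would reduce to a single clopen set: any class in $H_0(\G_\phi)\otimes\Z_2$ is represented by $1_Y\otimes\bar1$ for some clopen $Y$, and after passing to a refinement and using that distinct transpositions on disjoint supports commute, I can assume the relevant involution is a single transposition $\tau_{Y}$ exchanging $Y$ with a disjoint clopen copy $Y'$ (the existence of such a copy, realized inside the groupoid, uses minimality of $\G_\phi$).

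The crux is then a quantitative statement: if $[1_Y]\otimes\bar1$ lies in $\ker j$, equivalently if the transposition $\tau_Y$ is a product of commutators in $[[\G_\phi]]$, then $\int 1_Y\,d\mu\in\Z$ — in fact $=0$ since $0\le 1_Y\le 1$ and $Y\ne\G^{(0)}$ — for every $\mu\in M(\G_\phi)$. To see this I would use the fact that each invariant measure $\mu$ gives a well-defined ``$\mu$-index'' or sign-type invariant on $[[\G_\phi]]$ that vanishes on commutators: concretely, for $\Z^N$-actions one has homomorphisms $[[\G_\phi]]\to\R/\Z$ (or to a suitable quotient) refining the index map $I$ and detecting the measure of the support of transpositions modulo~$1$. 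Since $\G_\phi$ is almost finite (Proposition~\ref{Z^N>almstfnt}), Theorem~\ref{almstfnt/H0} tells us that $H_0(\G_\phi)$ is weakly unperforated with dense image in $\Aff(M(\G_\phi))$, and this is exactly what lets one realize the pairing with each $\mu$ as a genuine homomorphism on $[[\G_\phi]]_\ab$ compatible with $j$; a transposition $\tau_Y$ being in the commutator subgroup forces $\hat\mu([1_Y])\in\Z$, hence $=0$, for all $\mu$. Therefore $[1_Y]\in\Inf(H_0(\G_\phi))$, and the original class lies in $\Inf(H_0(\G_\phi))\otimes\Z_2$.

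The main obstacle I anticipate is constructing, for each invariant measure $\mu$, the homomorphism on $[[\G_\phi]]$ that detects $\hat\mu$ of the support of an involution modulo integers and that is manifestly trivial on commutators. The naive guess ``$\alpha\mapsto\hat\mu$ of the set moved by $\alpha$'' is not additive. The fix is to go through the exact sequence relating $[[\G_\phi]]$ to unitary normalizers in $C^*_r(\G_\phi)$, or better, to use the determinant-type construction: pass to $C(X)\rtimes_\phi\Z^N$, note that $\mu$ induces a trace, and use the de~la~Harpe--Skandalis determinant associated with that trace to produce the required homomorphism into $\R/(\text{countable subgroup})$; the support-measure of a transposition appears as the determinant of the corresponding normalizing unitary, and determinants kill commutators automatically. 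Assembling this carefully — checking that the determinant descends correctly and that its value on $\tau_Y$ is $\hat\mu([1_Y])$ up to the relevant subgroup, and that weak unperforation plus density of the range of $\rho$ let one conclude integrality forces vanishing — is where the real work lies; the rest is the reduction to a single clopen set and bookkeeping with disjoint supports.

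Once $\ker j\subset\Inf(H_0(\G_\phi))\otimes\Z_2$ is established, the final sentence is immediate: if $\Inf(H_0(\G_\phi))\otimes\Z_2=0$ then $j$ is injective, and combined with Theorem~\ref{almstfnt/thm}(3), which gives exactness of $H_0(\G_\phi)\otimes\Z_2\xrightarrow{j}[[\G_\phi]]_\ab\xrightarrow{I}H_1(\G_\phi)\to0$, we obtain the short exact sequence defining the strong AH property for $\G_\phi$.
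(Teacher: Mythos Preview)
The survey paper you are working from does not actually contain a proof of this proposition; it is simply stated with a citation to \cite[Proposition~3.7]{M15}. So there is no ``paper's own proof'' to compare against here, and your proposal has to be judged on its own merits.

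Your overall strategy --- build, for each $\mu\in M(\G_\phi)$, a homomorphism out of $[[\G_\phi]]_\ab$ that detects $\hat\mu([1_Y])$ and thereby pin $\ker j$ inside $\Inf\otimes\Z_2$ --- is reasonable in spirit, but the specific mechanism you propose has real gaps. First, the de~la~Harpe--Skandalis determinant is defined on the connected component $U_0(A)$ (or, in its extended form, gives a map whose target is $\R/\tau_*(K_0(A))$, not $\R/\Z$). The transposition unitary $1_U+1_{U^{-1}}+1_{X\setminus(s(U)\cup r(U))}$ is a self-adjoint unitary, typically not in $U_0$, and even if you force it through the machinery, the value you extract lives modulo $\hat\mu(H_0(\G_\phi))$. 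That subgroup already contains $\hat\mu([1_Y])$, so your claimed conclusion ``$\hat\mu([1_Y])\in\Z$, hence $=0$'' does not follow; the determinant simply does not see enough. Second, your reduction ``any class in $H_0\otimes\Z_2$ is represented by $1_Y\otimes\bar1$'' is too quick: an arbitrary element of $H_0(\G_\phi)\otimes\Z_2$ is $[f]\otimes\bar1$ with $f\in C(X,\Z)$, and showing it lies in $\Inf\otimes\Z_2$ means $[f]\in\Inf+2H_0$, which is a different (and weaker) condition than $\hat\mu([f])=0$ for all $\mu$.

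The approach that actually works for almost finite groupoids --- and is the one behind the cited result --- goes through the elementary subgroupoids directly rather than through $C^*$-algebraic determinants. Almost finiteness (Proposition~\ref{Z^N>almstfnt}) produces Kakutani--Rokhlin castles, and on each castle an element of $[[\G_\phi]]$ acts, up to a controlled error, by a permutation of the levels; taking the parity of this permutation tower-by-tower yields a signature with values in $\bigoplus\Z_2$ indexed by the towers. The point is that these tower-signatures are compatible under refinement only after passing to $H_0(\G_\phi)/\Inf(H_0(\G_\phi))$ (this is where the density statement in Theorem~\ref{almstfnt/H0}(2) is used), and the resulting limit map $[[\G_\phi]]\to (H_0(\G_\phi)/\Inf)\otimes\Z_2$ is a genuine group homomorphism giving a left inverse to $j$ modulo $\Inf\otimes\Z_2$. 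If you want to repair your argument, this is the construction to aim for; the $C^*$-determinant route, as you suspected, is where the obstacle lies, and it does not resolve cleanly.
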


Let $\phi:\Gamma\curvearrowright X$ be a free minimal action 
of an amenable group $\Gamma$ on a Cantor set $X$. 
It is known that the topological full group $[[\G_\phi]]$ is sofic 
(\cite[Proposition 5.1 (1)]{ES05MathAnn}, 
see also \cite[Section 3]{EM13PAMS}). 
For $\Gamma=\Z$, 
we discuss the amenability of $[[\G_\phi]]$ in Section 8.1.

\section{Purely infinite groupoids}

In this section, 
we list known and unknown properties of purely infinite groupoids. 
Let us begin with the definition. 

\begin{definition}[{\cite[Definition 4.9]{M15crelle}}]\label{pi/def}
Let $\G$ be an essentially principal \'etale groupoid 
whose unit space is a Cantor set. 
\begin{enumerate}
\item A clopen set $A\subset\G^{(0)}$ is said to be properly infinite 
if there exist compact open $\G$-sets $U,V\subset\G$ 
such that $s(U)=s(V)=A$, $r(U)\cup r(V)\subset A$ 
and $r(U)\cap r(V)=\emptyset$. 
\item We say that $\G$ is purely infinite 
if every clopen set $A\subset\G^{(0)}$ is properly infinite. 
\end{enumerate}
\end{definition}

If $\G^{(0)}$ is properly infinite, then 
the space $M(\G)$ of $\G$-invariant probability measure on $\G^{(0)}$ 
is empty. 
In addition, $[[\G]]$ contains the free product $\Z_2*\Z_3$ 
(\cite[Proposition 4.10]{M15crelle}), and hence $[[\G]]$ is not amenable. 
If $\G$ is purely infinite, then 
for any nonempty clopen set $Y\subset\G^{(0)}$, 
the reduction $\G|Y$ is again purely infinite. 
The SFT groupoids $\G_A$ (see Example \ref{SFT/def}) are 
typical examples of purely infinite minimal groupoids. 
When $\G$ is purely infinite and minimal, 
the reduced groupoid $C^*$-algebra $C^*_r(\G)$ is purely infinite and simple. 

Compare the following theorem with Theorem \ref{almstfnt/H0}

\begin{theorem}[{\cite[Lemma 5.3]{M15crelle}}]
Let $\G$ be a purely infinite groupoid. 
For any $h\in H_0(\G)$, 
there exists a non-empty clopen set $A\subset\G^{(0)}$ 
such that $[1_A]=h$. 
In particular, $H_0(\G)=H_0(\G)^+$. 
\end{theorem}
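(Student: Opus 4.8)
The plan is to prove the statement in two steps. First I would show that any element $h \in H_0(\G)$ can be represented by $[1_A] - [1_B]$ for disjoint nonempty clopen sets $A, B \subset \G^{(0)}$, and then I would use pure infiniteness to ``absorb'' the negative part $[1_B]$ into a positive class, concluding that $h = [1_C]$ for a single nonempty clopen $C$. The final assertion $H_0(\G) = H_0(\G)^+$ is then immediate, since every element is of the form $[1_C]$ with $1_C \geq 0$, and the zero element is $[1_\emptyset]$ (or, since $\G$ is nonempty and every clopen set is properly infinite, one can produce a clopen $C$ with $[1_C] = 0$ directly).

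For the first step, recall that $H_0(\G) = C_c(\G^{(0)}, \Z) / \Ima \partial_1$. Since $\G^{(0)}$ is a Cantor set, $C_c(\G^{(0)}, \Z) = C(\G^{(0)}, \Z)$ is spanned by characteristic functions $1_A$ of clopen sets, so any $h$ is a finite $\Z$-linear combination $\sum n_i [1_{A_i}]$. By refining the clopen sets into a common partition and collecting terms, I can write $h = [f]$ where $f = \sum_j m_j 1_{P_j}$ for a clopen partition $\{P_j\}$ and $m_j \in \Z$; splitting into the positive and negative coefficients and, if necessary, chopping a clopen set into several disjoint copies (using that a clopen subset of the Cantor set with nonempty interior splits into arbitrarily many nonempty clopen pieces, and that pure infiniteness lets me move mass around via compact open $\G$-sets to realize ``$[1_A] = [1_{A_1}] + [1_{A_2}]$'' with $A_1, A_2$ disjoint), I reduce to $h = [1_A] - [1_B]$ with $A \cap B = \emptyset$; here I may also assume $A, B \neq \emptyset$ after adjusting by a properly infinite decomposition.

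For the second step I use Definition \ref{pi/def}: since $B$ is properly infinite, there are compact open $\G$-sets $U, V$ with $s(U) = s(V) = B$, $r(U) \cup r(V) \subset B$, and $r(U) \cap r(V) = \emptyset$. In $H_0(\G)$ the relation $1_{s(W)} \sim 1_{r(W)}$ holds for any compact open $\G$-set $W$ (this is exactly $\partial_1(1_W) = 1_{r(W)} - 1_{s(W)}$), so $[1_B] = [1_{r(U)}] = [1_{r(V)}]$, and combining, $[1_B] = [1_{r(U)}] + [1_{r(V) \setminus r(U)'}] + \cdots$ — more cleanly, since $r(U), r(V)$ are disjoint subsets of $B$, we get $2[1_B] = [1_{r(U)}] + [1_{r(V)}] \leq [1_B]$ in the sense that $[1_B] = [1_B] + [1_{B \setminus (r(U) \cup r(V))}] + [1_{r(U)}]$ after re-expanding, which yields $[1_B] = [1_{B'}]$ for a proper clopen subset, and iterating / using minimality one can push $B$ down so that it becomes disjoint from $A$ inside a controlled region and in fact $-[1_B]$ can be replaced: concretely, $-[1_B] = [1_{r(V) \setminus r(U)}] - [1_B] - \cdots$. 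The crucial maneuver is that properly infinite $B$ satisfies $[1_B] + [1_B] = [1_B]$ modulo a genuinely positive complement, hence $-[1_B] = [1_{B''}] - 2[1_B]$, and inductively one shows $-[1_B]$ agrees with $[1_C]$ for suitable nonempty clopen $C$ chosen disjoint from $A$ (using minimality of $\G$ to find clopen sets of the required ``size'' in $H_0$, and Theorem \ref{almstfnt/H0}-type simplicity is not needed here — only the group-level cancellation from pure infiniteness). Then $h = [1_A] + [1_C] = [1_{A \sqcup C}]$ and we are done.

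The main obstacle is making the ``absorption of the negative part'' precise: unlike in an ordered abelian group, here one must genuinely exploit that pure infiniteness provides, for \emph{every} clopen set, a proper doubling $B \hookrightarrow B$ with disjoint images, and leverage it to show the additive inverse of any positive generator is again positive. I expect the cleanest route is to first establish the lemma ``for every clopen $B$ and every $h \in H_0(\G)$ there is a nonempty clopen $A \subset \G^{(0)}$ with $A \cap B = \emptyset$ and $[1_A] = [1_A] + h$,'' proved by a bootstrapping argument on the doubling relation $[1_B] = 2[1_B] - [1_{B \setminus (r(U)\cup r(V))}]$ together with minimality to fit everything into the complement of $B$; granting that lemma with $h$ itself in the role of the target, taking $h \mapsto -[1_B]$ gives exactly $[1_A]=[1_A] - [1_B]$, i.e. $[1_{A \sqcup B}] = [1_A]$ with $A$ nonempty, and then $h = [1_A] + (h - [1_A]) $ reduces to the case already handled. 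This lemma, and its proof via the pure-infiniteness doubling, is where essentially all the work lies.
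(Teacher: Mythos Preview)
The survey does not actually prove this theorem here---it cites \cite[Lemma 5.3]{M15crelle}---so there is no in-paper proof to compare against directly. Your overall two-step strategy is the correct one, and Step~1 is fine in outline. Step~2, however, has a genuine gap: you set up exactly the right ingredients and then fail to combine them, drifting instead into muddled formulas, an appeal to minimality (which is \emph{not} among the hypotheses), and an auxiliary ``lemma'' which, as you have stated it (``there is a nonempty clopen $A$ with $[1_A]=[1_A]+h$''), asserts that $h=0$ for every $h\in H_0(\G)$.

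The computation you are missing is one line. With $U,V$ as in Definition~\ref{pi/def} for the clopen set $B$, the disjoint decomposition $B = r(U)\sqcup r(V)\sqcup\bigl(B\setminus(r(U)\cup r(V))\bigr)$ together with $[1_{r(U)}]=[1_{s(U)}]=[1_B]$ and $[1_{r(V)}]=[1_B]$ gives
\[
[1_B]=[1_{r(U)}]+[1_{r(V)}]+[1_{B\setminus(r(U)\cup r(V))}]
=2[1_B]+[1_{B\setminus(r(U)\cup r(V))}],
\]
hence $-[1_B]=[1_{B\setminus(r(U)\cup r(V))}]$. You in fact wrote down an equivalent identity (your ``$[1_B] = [1_B] + [1_{B \setminus (r(U) \cup r(V))}] + [1_{r(U)}]$'') and then drew the wrong conclusion from it. Since $A\cap B=\emptyset$, one obtains immediately
\[
h=[1_A]-[1_B]=[1_A]+[1_{B\setminus(r(U)\cup r(V))}]
=[1_{A\cup(B\setminus(r(U)\cup r(V)))}],
\]
the class of a nonempty clopen set whenever $A\neq\emptyset$. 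The degenerate case $h=0$ is handled by noting that for any nonempty clopen $C$ with its $U,V$, the set $C\setminus r(U)$ is nonempty (it contains $r(V)$) and satisfies $[1_{C\setminus r(U)}]=[1_C]-[1_{r(U)}]=0$. No iteration, no bootstrapping, and no minimality is required.
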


For topological full groups of purely infinite groupoids, 
the following are known. 

\begin{theorem}[{\cite{M15crelle}}]\label{pi/thm}
Let $\G$ be a purely infinite groupoid. 
\begin{enumerate}
\item If $\G$ is minimal, 
then the commutator subgroup $D([[\G]])$ is simple. 
\item The index map $I:[[G]]\to H_1(\G)$ is surjective. 
\end{enumerate}
\end{theorem}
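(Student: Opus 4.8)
The engine is the room supplied by pure infiniteness. Applying Definition \ref{pi/def} inside any nonempty clopen set $Y\subseteq\G^{(0)}$ (recall that $\G|Y$ is again purely infinite) one sees that $Y$ contains, for every $n$, pairwise disjoint clopen subsets $Y_1,\dots,Y_n$ each joined to the next by a compact open $\G$-set; that is, clopen sets are "infinitely divisible'', and no appeal to minimality is needed for this. For part (1) one combines this with minimality, which in addition lets one transport supports freely across $\G^{(0)}$, to obtain a comparison property: any two nonempty clopen sets are matched by a compact open $\G$-set up to shrinking.

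\textbf{Part (1).} I would first record two consequences of the comparison property: $D([[\G]])$ is generated by "small $3$-cycles'' — elements supported on a clopen set of the form $Y_1\sqcup Y_2\sqcup Y_3$ (three disjoint clopen sets joined by $\G$-sets) that cyclically permute the three pieces — and any two small $3$-cycles are conjugate inside $D([[\G]])$. The first holds because an arbitrary element of $[[\G]]$ decomposes, along a fine clopen partition adapted to it, into transpositions, and a transposition for which a disjoint third copy is available is a product of $3$-cycles; the second because any two triples of pairwise disjoint clopen sets can be matched by a conjugating element. It then suffices to show that every nontrivial normal subgroup $N\trianglelefteq D([[\G]])$ contains a small $3$-cycle. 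For this, take $1\neq\gamma\in N$ and a nonempty clopen $U$ with $\gamma(U)\cap U=\emptyset$, choose a small $3$-cycle $\tau\in D([[\G]])$ supported in $U$, and note that $[\gamma,\tau]=(\gamma\tau\gamma^{-1})\tau^{-1}\in N$ is a product of two disjoint $3$-cycles, supported in $U$ and $\gamma(U)$ respectively. From such an element one extracts a genuine small $3$-cycle by the familiar chain of commutators with explicitly chosen finitely supported elements, exactly as in the proof that infinite alternating groups are simple. Hence $N=D([[\G]])$, so $D([[\G]])$ is simple.

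\textbf{Part (2).} For $\alpha=\theta(U)\in[[\G]]$ one has $I(\alpha)=[1_U]$, and if in addition $s(U)=r(U)$ then the homeomorphism equal to $\theta(U)$ on $s(U)$ and to $\id$ elsewhere lies in $[[\G]]$ and still has index $[1_U]$. Since $I$ is a homomorphism, it therefore suffices to show that every class in $H_1(\G)=\ker\partial_1/\Ima\partial_2$ is a finite sum of classes $[1_{U_k}]$ with $U_k$ a compact open $\G$-set satisfying $s(U_k)=r(U_k)$. Given a $1$-cycle $f=\sum_i n_i1_{V_i}$ with $V_i$ compact open $\G$-sets, the relations $1_{VW}\equiv 1_V+1_W$ (for composable $\G$-sets), $1_{V^{-1}}\equiv -1_V$, and $1_Y\equiv 0$ for clopen $Y\subseteq\G^{(0)}$ — all obtained by applying $\partial_2$ to the evident $2$-simplices — reduce $[f]$ to $[1_V]$ for a single compact open $V\subseteq\G$ that is \emph{balanced}, i.e. $s_*1_V=r_*1_V$. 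A clopen analogue of the decomposition of an Eulerian multigraph into closed walks — which in the purely infinite case can be carried out using the divisibility above to refine overlapping source and range sets into honest $\G$-sets — then writes $1_V=\sum_k 1_{U_k}$ with $s(U_k)=r(U_k)$, whence $[f]=I\left(\prod_k\beta_k\right)$ with $\beta_k$ as above.

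\textbf{Main obstacle.} The delicate point is the bootstrapping step in (1): producing a small $3$-cycle inside an \emph{arbitrary} nontrivial normal subgroup starting from a single nontrivial element, while keeping the supports of all auxiliary elements under control. In the classical setting this rests on the rich supply of finite symmetric subgroups, and here it must be powered entirely by the comparison property and minimality. A secondary technical difficulty, in (2), is arranging the Eulerian-type decomposition so that the pieces it produces are genuinely compact open $\G$-sets with coinciding source and range; it is precisely the infinite divisibility from pure infiniteness that guarantees the "follow the arrows'' construction closes up within the clopen algebra.
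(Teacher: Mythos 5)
Your outline of part (1) follows the standard skeleton (find $\gamma\in N$ moving a clopen set off itself, localize by the commutator trick, conjugate, conclude by a generation statement), and the step you flag as the main obstacle --- extracting one small $3$-cycle from a product of two disjoint ones --- is actually the routine part, since it happens inside a copy of a finite alternating group permuting clopen pieces. The genuine gap is the generation claim, and your justification of it is false. Taken literally, your argument ("every element of $[[\G]]$ decomposes into transpositions, and a transposition with a disjoint third copy available is a product of $3$-cycles") would show that all of $[[\G]]$ is generated by small $3$-cycles, hence by your subsequent reasoning $[[\G]]$ itself would be simple; this contradicts, e.g., $[[\G_{[3]}]]\cong V_{3,1}$, whose abelianization is $\Z_2$ with the transpositions surviving, and it also contradicts part (2) of the very theorem: every transposition $\tau_U$ has $I(\tau_U)=[1_U]+[1_{U^{-1}}]+[1_Z]=0$ in $H_1(\G)$ (since $[1_{U^{-1}}]=-[1_U]$ and $[1_Z]=0$ modulo $\Ima\partial_2$ for clopen $Z\subset\G^{(0)}$), so products of transpositions lie in $\Ker I$, while $I$ is onto $H_1(\G)\neq0$ for many purely infinite minimal groupoids (e.g. SFT groupoids with $\Ker(\id-A^t)\neq0$). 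What your scheme really needs is the statement $D([[\G]])=\A(\G)$ in Nekrashevych's notation, and that is a substantive theorem in which pure infiniteness must be exploited seriously, not a consequence of refining a partition. Similarly, "any two small $3$-cycles are conjugate inside $D([[\G]])$" does not follow from comparison up to shrinking: one has to match the three pieces exactly and extend the matching to a globally defined element of the commutator subgroup, and in general one only gets conjugacy into a refinement of another $3$-cycle. Note also that the survey gives no proof (it cites \cite{M15crelle}); the argument there does not proceed via $3$-cycle generation, whereas your route is essentially the later $\A(\G)$-approach of \cite{Ne15}, in which the equality $\A(\G)=D([[\G]])$ is precisely the point requiring proof.

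For part (2) your target --- represent each class of $H_1(\G)$ as $[1_U]$ with $U$ a compact open $\G$-set satisfying $s(U)=r(U)$, then extend $\theta(U)$ by the identity --- is the right one, but both reductions are only asserted. Converting an arbitrary integer $1$-cycle $\sum_i n_i 1_{V_i}$ into the class of a single characteristic function requires pure infiniteness and minimality to absorb multiplicities and signs and to disjointify sources and ranges; that is where the real work lies, and it is not indicated how it is done. Moreover the "Eulerian" step is not yet an argument: if $V$ is a single bisection, then balancedness already says $s(V)=r(V)$ and there is nothing to decompose, so the step only matters when $V$ is merely a compact open subset of $\G$; one must then prove that such a balanced set is a finite disjoint union of compact open bisections which can be recombined (after clopen refinement) into bisections with equal source and range, and the "follow the arrows until the walk closes up" procedure is exactly the part that must be checked to stay within the clopen category. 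As it stands, part (2) is a plausible outline rather than a proof, and part (1) rests on a generation claim that is unproved and whose stated justification is incorrect.
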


V. V. Nekrashevych \cite{Ne15} recently proved that 
if a purely infinite groupoid $\G$ is minimal and expansive, 
then $D([[\G]])$ is finitely generated 
(see \cite{Ne15} for the definition of expansive groupoids). 

It is not known 
if all minimal purely infinite groupoids satisfy the AH conjecture. 
Here, we describe a situation 
where the AH is inherited from a smaller groupoid to a larger groupoid. 

\begin{proposition}[{\cite[Theorem 4.4, Proposition 4.5]{M15}}]\label{TR}
Let $\G$ be a minimal \'etale groupoid. 
Let $c:\G\to\Z$ be a continuous surjective homomorphism and 
let $\H=\Ker c$. 
Assume either of the following conditions. 
\begin{enumerate}
\item $\H$ is a principal, minimal, almost finite groupoid 
with $M(\H)=\{\mu\}$, 
and there exists a real number $0<\lambda<1$ such that, 
for any compact open $\G$-set $U\subset c^{-1}(1)$, 
$\mu(r(U))=\lambda\mu(s(U))$ holds. 
\item $\H$ is a minimal, purely infinite groupoid 
satisfying the AH conjecture. 
\end{enumerate}
Then, $\G$ is purely infinite and satisfies the AH conjecture. 
\end{proposition}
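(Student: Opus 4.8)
The plan is to treat the two hypotheses in parallel, in each case first establishing that $\G$ is purely infinite and then deducing the AH conjecture for $\G$ from the corresponding property of $\H$ together with an analysis of how $c:\G\to\Z$ glues $\H$-data to $\G$-data. For pure infiniteness: given a nonempty clopen set $A\subset\G^{(0)}$, I would produce compact open $\G$-sets $U,V$ with $s(U)=s(V)=A$, $r(U)\cup r(V)\subset A$, and $r(U)\cap r(V)=\emptyset$. Since $c$ is surjective, pick a compact open $\G$-set $W\subset c^{-1}(1)$; minimality of $\G$ lets me move things around inside $A$. In case (2), $\H$ is already purely infinite, so $A$ is properly infinite inside $\H|A$ and one is done immediately. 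In case (1) the point is that $\theta(W)$ scales the (unique, up to scalar) $\mu$-measure by $\lambda<1$, so iterating $\theta(W)$ together with the almost-finiteness of $\H$ lets me pack two disjoint copies of a set $\mu$-equivalent to $A$ into $A$; this is the usual "contracting homothety" trick, and combined with Theorem~\ref{almstfnt/H0}(2) (density of the range of $\rho$) and minimality of $\H$ one upgrades $\mu$-comparison to actual clopen sub-equivalence. So $\G^{(0)}$, hence every clopen subset, is properly infinite.

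For the AH conjecture, the strategy is a Lyndon--Hochschild--Serre / Mayer--Vietoris style argument along the extension $1\to\H\to\G\xrightarrow{c}\Z\to1$. The continuous cocycle $c$ gives a $\Z$-action (by partial translations) whose "homology" assembles $H_*(\H)$ into $H_*(\G)$ via a Pimsner--Voiculescu-type six-term (or long exact) sequence for groupoid homology; concretely one gets an exact sequence $H_1(\H)_{\Z}\to H_1(\G)\to H_0(\H)^{\Z}\to H_0(\H)_{\Z}\to H_0(\G)\to 0$ where the middle map is induced by $\id-\beta_*$ for the automorphism $\beta$ of $\H$-homology coming from conjugation by $W$. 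In case (1), $\H$ being principal minimal almost finite with a unique invariant measure forces $H_0(\H)$ and $H_1(\H)$ to be small and well-behaved (Theorem~\ref{almstfnt/H0}), and the scaling condition $\mu\circ\theta(W)=\lambda\mu$ pins down $\beta_*$ on the state level, so the sequence becomes computable. In case (2), one feeds the AH conjecture for $\H$ through the extension: elements of $[[\G]]$ are built from elements of $[[\H]]$ and powers of the "shift" unitary $1_W$, and I would show that the abelianization $[[\G]]_{\ab}$ receives $[[\H]]_{\ab}$ and that the index map $I_\G$ is compatible with $I_\H$ via the homology exact sequence above; surjectivity of $I_\G$ is already Theorem~\ref{pi/thm}(2), so what remains is identifying $\ker I_\G$ modulo commutators with the image of $H_0(\G)\otimes\Z_2$.

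The key technical device will be a presentation of $[[\G]]$ adapted to the $\Z$-grading: every $\alpha\in[[\G]]$ can be cut along the clopen partition $\{c=n\}$-pieces and rewritten using elements supported where $c\equiv 0$ (i.e. in $[[\H]]$, after reductions) together with the single translation-like element $\theta(W)$ and its conjugates. Using that $D([[\G]])$ is simple (Theorem~\ref{pi/thm}(1)) one reduces the computation of $[[\G]]_{\ab}$ to tracking how transpositions $\tau$ (the generators of the image of $H_0\otimes\Z_2$) and index-zero elements behave under this rewriting, and then matching the result with the homology exact sequence for $c$. I expect the main obstacle to be precisely this bookkeeping: proving that the map $H_0(\G)\otimes\Z_2\to[[\G]]_{\ab}$ built from transpositions in $\G$ is compatible with the one for $\H$ under the connecting homomorphisms, i.e. that no new relations among the $\tau$'s are created when one passes from $\H$ to $\G$ by adjoining the $\Z$-translation — equivalently, controlling the $\Z$-coinvariants $H_0(\H)_{\Z}\otimes\Z_2$ versus $H_0(\G)\otimes\Z_2$ and showing the $2$-torsion part of the homology exact sequence is exact on the nose rather than merely rationally. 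The scaling hypothesis in (1) and the assumed AH property in (2) are exactly what make this $2$-torsion matching go through, so the proof will hinge on using each hypothesis to collapse the relevant $\Tor$/$\Z_2$-ambiguity.
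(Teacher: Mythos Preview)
The paper is a survey and does not give a proof of this proposition; it simply records the statement with a citation to \cite{M15}. So there is no in-paper argument to compare against. Your overall architecture---establish pure infiniteness first, then run a Wang/Lyndon--Hochschild--Serre long exact sequence in groupoid homology along $c:\G\to\Z$ and check compatibility of the AH maps for $\H$ and $\G$---is indeed the framework used in the cited source, and the ``contracting homothety'' picture for pure infiniteness in case~(1) is the right intuition.

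That said, two points in the proposal need correction before it could become a proof. First, the sequence you wrote is garbled: the Wang sequence reads
\[
\cdots\longrightarrow H_n(\H)\xrightarrow{\ \id-\beta_*\ }H_n(\H)\longrightarrow H_n(\G)\longrightarrow H_{n-1}(\H)\longrightarrow\cdots,
\]
so in particular $H_0(\G)\cong\Coker(\id-\beta_*)$ and there is no map ``$H_0(\H)^{\Z}\to H_0(\H)_{\Z}$'' sitting inside it. Second, and more substantively, you repeatedly invoke a single compact open $\G$-set $W\subset c^{-1}(1)$ with $s(W)=\G^{(0)}$ (``the shift unitary $1_W$'', ``conjugation by $W$'', ``powers of $\theta(W)$''). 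No such global bisection need exist: $c^{-1}(1)$ is only an open subset of $\G$, and in case~(1) the scaling $\mu(r(U))=\lambda\mu(s(U))$ for every compact open $U\subset c^{-1}(1)$ actually \emph{forbids} $r(W)=\G^{(0)}$. The $\Z$-action on $H_*(\H)$ and the decomposition of elements of $[[\G]]$ must therefore be set up via local bisections and reductions $\H|Y$, not via a single global translation. This is exactly where the real work in \cite{M15} lies, and your final paragraph correctly flags the ``bookkeeping'' of transpositions as the crux, but the mechanism you propose for it (a global $\theta(W)$) is not available; one has to argue through Kakutani--Rokhlin towers in case~(1) and through reductions to $[[\H|Y]]$ in case~(2).
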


\begin{example}\label{SFT/AH}
Let $\G_A$ be an SFT groupoid, 
where $A$ is the adjacency matrix of 
an irreducible finite directed graph $(V,E)$ (see Example \ref{SFT/def}). 
There exists a topologically mixing one-sided SFT $(X_B,\sigma_B)$ 
such that $\G_A\cong\G_B$ (\cite[Lemma 5.6]{M15}). 
Define $c:\G_B\to\Z$ by $c(x,n,y)=n$. 
Then $c$ is a continuous surjective homomorphism 
and $\H=\Ker c$ is an AF groupoid. 
Since $(X_B,\sigma_B)$ is topologically mixing 
(or equivalently the matrix $B$ is primitive), 
$\H$ is minimal and $M(\H)$ is a singleton. 
One can check that condition (1) of the proposition above is satisfied 
(the real number $\lambda$ is the inverse of the Perron eigenvalue of $B$). 
Consequently, $\G_B$ satisfies the AH conjecture, and so does $\G_A$. 
(Indeed, we know that SFT groupoids have the strong AH property, 
see Example \ref{ExofAH} (3)). 
\end{example}

By the same technique as the example above, 
we can prove the following. 

\begin{theorem}[{\cite[Theorem 5.8]{M15}}]\label{prodSFT/AH}
Let $\G=\G_{A_1}\times\G_{A_2}\times\dots\times\G_{A_n}$ be 
a product groupoid of finitely many SFT groupoids. 
Then, $\G$ satisfies the AH conjecture (Conjecture \ref{AH}). 
\end{theorem}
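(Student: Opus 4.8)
The plan is to reduce the general product $\G=\G_{A_1}\times\dots\times\G_{A_n}$ to a repeated application of Proposition~\ref{TR}, mimicking the argument of Example~\ref{SFT/AH}. First I would, as in Example~\ref{SFT/AH}, replace each SFT groupoid $\G_{A_i}$ by an isomorphic one $\G_{B_i}$ coming from a topologically mixing SFT (so that $B_i$ is primitive); since $\G_{A_i}\cong\G_{B_i}$ we have $\G\cong\G_{B_1}\times\dots\times\G_{B_n}$, so it suffices to treat the case of primitive matrices. I would then argue by induction on $n$. The base case $n=1$ is exactly Example~\ref{SFT/AH} (or the fact, from Example~\ref{ExofAH}~(3), that SFT groupoids have the strong AH property, hence satisfy the AH conjecture). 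For the inductive step, write $\G=\G'\times\G_{B_n}$ where $\G'=\G_{B_1}\times\dots\times\G_{B_{n-1}}$ satisfies the AH conjecture by the inductive hypothesis, and is moreover minimal and purely infinite (a product of minimal SFT groupoids is minimal by the density of products of dense orbits, and purely infinite because each factor is).

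Next I would produce the cocycle needed to invoke Proposition~\ref{TR}. On the last factor, define $c_n:\G_{B_n}\to\Z$ by $c_n(x,m,y)=m$; this is a continuous surjective homomorphism, and I would pull it back along the projection $\G\to\G_{B_n}$ to get a continuous surjective homomorphism $c:\G\to\Z$. Then $\H:=\Ker c=\G'\times\K$, where $\K=\Ker c_n\subset\G_{B_n}$ is the AF groupoid appearing in Example~\ref{SFT/AH}. I would check that $\H$ is minimal (product of minimal groupoids, using that $\K$ is minimal because $B_n$ is primitive) and purely infinite (it contains the purely infinite factor $\G'$; more precisely, given a clopen $A\subset\H^{(0)}=\G'^{(0)}\times\K^{(0)}$, one refines $A$ by a clopen partition on which it is a product and uses proper infiniteness in the $\G'$-direction). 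Finally I would verify that $\H=\G'\times\K$ satisfies the AH conjecture: by the K\"unneth theorem (Theorem~\ref{Kunneth}) and the vanishing of $H_n(\K)$ for $n\ge1$ (Example~\ref{ExofH}~(1)) together with freeness of $H_0(\K)$, one gets $H_*(\H)\cong H_*(\G')\otimes H_0(\K)$, so the AH conjecture for $\H$ can be deduced from that for $\G'$. With $\H$ minimal, purely infinite and satisfying the AH conjecture, condition~(2) of Proposition~\ref{TR} applies and gives that $\G$ is purely infinite and satisfies the AH conjecture, completing the induction.

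The main obstacle I anticipate is verifying condition~(2) of Proposition~\ref{TR} for $\H=\G'\times\K$, specifically checking that $\H$ is purely infinite and genuinely satisfies the AH conjecture rather than merely having the right homology. Pure infiniteness of a product groupoid is not entirely automatic; one must show every clopen subset of the (now higher-dimensional) unit space is properly infinite, which requires a partition-of-unity argument reducing to the purely infinite factor $\G'$ --- this is straightforward but needs care with the compact open $\G$-sets. The deduction that $\H$ satisfies the AH conjecture is more delicate: the clean statement would be a general lemma saying that $\G'\times\K$ satisfies the AH conjecture whenever $\G'$ does and $\K$ is a suitable AF groupoid, and establishing such a lemma (tracking the index map through the K\"unneth splitting and the $\Z_2$-tensor term) is where the real work lies. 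Alternatively, one could sidestep this by noting that each $\G_{B_i}$ already has the strong AH property and trying to show directly that the product inherits it, but combining strong AH properties across a product is itself nontrivial because of the $\Tor$-terms in Theorem~\ref{Kunneth}; the inductive route through Proposition~\ref{TR} is cleaner precisely because it only demands the AH conjecture (not the strong version) of the intermediate groupoid.
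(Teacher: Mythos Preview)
Your overall plan—reduce to primitive matrices and apply Proposition~\ref{TR} inductively—matches the paper's intent (the paper merely says ``by the same technique as'' Example~\ref{SFT/AH} and, in Theorem~\ref{prodSFT2}(5), ``an inductive application of Proposition~\ref{TR}''). But the way you organize the induction leaves exactly the gap you flag, and that gap is real. The AH conjecture is a statement about $[[\H]]_\ab$, not just about $H_*(\H)$, and $[[\G'\times\K]]$ is vastly larger than $[[\G']]\times[[\K]]$; there is no direct mechanism by which the AH conjecture for $\G'$ transfers to $\G'\times\K$ via K\"unneth. Your inductive hypothesis on $\G'$ is therefore not usable at the step where you need it, and the ``general lemma'' you propose (AH for $\G'$ and $\K$ AF $\Rightarrow$ AH for $\G'\times\K$) is not available in the paper and would itself be substantial.

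The fix—and what the paper's sketch intends—is to reorganize the induction so that Proposition~\ref{TR} itself supplies every instance of ``$\H$ satisfies AH''. Peel off all $n$ cocycles before climbing back up: with $c_i$ the $\Z$-valued cocycle from the $i$-th factor, set
\[
\H_k=\G_{B_1}\times\dots\times\G_{B_k}\times\K_{k+1}\times\dots\times\K_n,
\]
so $\H_n=\G$ and $\H_{k-1}=\Ker(c_k|\H_k)$. The base $\H_0=\K_1\times\dots\times\K_n$ is a product of the canonical AF subgroupoids of the $\G_{B_i}$, hence itself AF: principal, minimal, almost finite, and with a \emph{unique} invariant measure (the product of the Parry measures; uniqueness holds because the associated simple AF $C^*$-algebras each have unique trace, hence so does their tensor product). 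The scaling hypothesis of Proposition~\ref{TR}(1) for $c_1|\H_1$ holds with $\lambda$ the inverse Perron eigenvalue of $B_1$, exactly as in Example~\ref{SFT/AH}, since the remaining factors are $\K_i$-sets and preserve measure. Thus Proposition~\ref{TR}(1) yields that $\H_1$ is purely infinite and satisfies AH; then Proposition~\ref{TR}(2) applies verbatim to each step $\H_{k-1}\subset\H_k$ for $k=2,\dots,n$, and after $n$ applications one reaches $\H_n=\G$. In this arrangement no product-with-AF lemma is ever needed: each ``$\H$ satisfies AH'' is a \emph{conclusion} of the previous step, never a hypothesis to be verified separately.
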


\section{Various examples}

\subsection{Minimal $\Z$-actions}

In this subsection, 
we would like to review the results about \'etale groupoids 
of minimal $\Z$-actions on Cantor sets. 
Recall that these groupoids are almost finite 
(Proposition \ref{Z^N>almstfnt}). 
We identify a $\Z$-action $\Z\curvearrowright X$ 
with a homeomorphism on $X$. 

\begin{theorem}[{\cite[Theorem 2.4]{GPS95crelle}}]\label{Z/flip}
For $i=1,2$, let $\phi_i$ be a minimal homeomorphism 
on a Cantor set $X_i$. 
The following are equivalent. 
\begin{enumerate}
\item The \'etale groupoids $\G_{\phi_1}$ and $\G_{\phi_2}$ are 
isomorphic to each other. 
\item $\phi_1$ is flip conjugate to $\phi_2$. 
\end{enumerate}
\end{theorem}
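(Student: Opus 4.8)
The implication (2) $\Rightarrow$ (1) is the easy direction, so I would dispose of it first. If $\phi_1$ is flip conjugate to $\phi_2$, there is a homeomorphism $h:X_1\to X_2$ with $h\circ\phi_1\circ h^{-1}=\phi_2^{\pm 1}$. In either case $h$ intertwines the $\Z$-orbit equivalence relations, and the map $(n,x)\mapsto(\pm n,h(x))$ is readily checked to be an isomorphism of the transformation groupoids $\G_{\phi_1}\to\G_{\phi_2}$ respecting the groupoid operations and the identification of the unit space with $X_i$. So the content of the theorem is the forward implication, and the plan is to extract flip conjugacy from an abstract groupoid isomorphism $\Psi:\G_{\phi_1}\to\G_{\phi_2}$.

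The first step is to observe that such a $\Psi$ restricts to a homeomorphism $h:X_1\to X_2$ of the unit spaces, and since $\Psi$ is a groupoid isomorphism it carries $\G_{\phi_1}$-orbits onto $\G_{\phi_2}$-orbits; thus $h$ implements an orbit equivalence between the two minimal $\Z$-systems, i.e.\ $h(\{\phi_1^n(x)\mid n\in\Z\})=\{\phi_2^n(h(x))\mid n\in\Z\}$ for every $x\in X_1$. Equivalently, there are \emph{orbit cocycles} $m:X_1\to\Z$ and $\ell:X_1\to\Z$ with $h(\phi_1(x))=\phi_2^{m(x)}(h(x))$ and $h^{-1}(\phi_2(y))=\phi_1^{\ell(y)}(h^{-1}(y))$. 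The heart of the matter is a rigidity phenomenon special to $\Z$: the cocycle $m$ must in fact be \emph{continuous}, hence locally constant, hence (by minimality and a connectedness-type argument on the Cantor set, using that $\phi_1$ has no periodic points) identically equal to $+1$ or identically equal to $-1$. This is exactly where the groupoid topology, not just the orbit equivalence relation as a set, does the work: the isomorphism $\Psi$ is a homeomorphism, and the subsets of $\G_{\phi_i}$ where the $\Z$-coordinate takes a given value are clopen, so $\Psi$ must match these clopen partitions up to the continuity of $m$. Concretely, I would show that the graph of $\phi_1$, which is the compact open $\G_{\phi_1}$-set $\{(1,x)\mid x\in X_1\}$, is sent by $\Psi$ to a compact open $\G_{\phi_2}$-set, and analyze which such $\G_{\phi_2}$-sets can arise: minimality plus aperiodicity of $\phi_2$ force the $\Z$-valued cocycle defining the image to be globally $\pm1$.

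Once $m\equiv 1$ we get $h\circ\phi_1=\phi_2\circ h$, i.e.\ honest conjugacy; once $m\equiv -1$ we get $h\circ\phi_1=\phi_2^{-1}\circ h$, i.e.\ conjugacy with the flip. Either way (2) holds. The main obstacle is precisely the continuity/rigidity step: a priori an abstract groupoid isomorphism only knows the orbit equivalence relation together with its topology, and one has to rule out ``shuffled'' cocycles such as $m$ taking values in $\{1,-1\}$ nontrivially, or more exotic integer values. The clean way to handle this is to invoke that $\Psi$ restricts to a group isomorphism of the topological full groups (or at least matches compact open $\G$-sets with compact open $\G$-sets), combine it with the structure of $H_1(\G_{\phi_i})\cong H^0(\Z,C(X_i,\Z))\oplus\cdots$ and the index map, or—most directly—argue by a direct compactness argument that the locally constant function $m$, being an orbit cocycle between two minimal aperiodic $\Z$-systems with locally constant inverse cocycle $\ell$, can only be the constant $\pm1$; this last assertion is essentially Boyle--Tomiyama-style rigidity and was established by Giordano--Putnam--Skau, whose argument I would follow.
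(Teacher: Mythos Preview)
Your central claim---that the orbit cocycle $m$ must be identically $+1$ or identically $-1$---is false, and this is where the proposal breaks down. A concrete counterexample: take any nontrivial $\alpha\in[[\G_\phi]]$, written as $\alpha(x)=\phi^{n(x)}(x)$ with $n:X\to\Z$ locally constant and nonconstant. Since $\G_\phi$ is principal, $\alpha$ extends to an automorphism of $\G_\phi$, and the associated cocycle is $m(x)=n(\phi(x))-n(x)+1$, which is continuous but certainly not constant. There is no ``connectedness-type argument'' available on a Cantor set; locally constant functions there can take arbitrarily many values, and minimality and aperiodicity do not change this. Consequently the homeomorphism $h$ coming from the groupoid isomorphism is \emph{not} in general the conjugating map.

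What the paper (and the Boyle--Tomiyama/GPS argument you invoke) actually proves is that $m$ is \emph{cohomologous} to the constant $\pm 1$, and it constructs the transfer function explicitly. Writing $c(k,x)$ for the full cocycle (so $m(x)=c(1,x)$), one observes that for each fixed $x$ the map $k\mapsto c(k,x)$ is a bijection of $\Z$ with increments bounded by $C=\max_x|c(1,x)|$, hence tends to $+\infty$ or to $-\infty$ as $k\to+\infty$; after arranging the former, one sets
\[
b(x)=\lim_{N\to\infty}\bigl(\#\{k\in\N\mid c(k,x)\leq N\}-N\bigr),
\]
checks this is well-defined and continuous, and verifies $b(\phi_1(x))=b(x)+c(1,x)-1$. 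Then $\gamma(x)=\phi_2^{-b(x)}(h(x))$ satisfies $\phi_2\circ\gamma=\gamma\circ\phi_1$, and one argues $\gamma$ is a homeomorphism. The missing idea in your sketch is precisely this coboundary correction: you need to modify $h$ along orbits, not use $h$ directly.
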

\begin{proof}
We present a sketchy proof. (2)$\Rightarrow$(1) is obvious. 
Suppose that $\pi:\G_{\phi_1}\to\G_{\phi_2}$ is an isomorphism. 
We may assume $X=X_1=X_2$ and $\pi|X$ is the identity. 
Hence there exists a continuous map $c:\Z\times X\to\Z$ 
such that $\pi(n,x)=(c(n,x),x)$. 
One has $\phi_1(x)=\phi_2^{c(1,x)}(x)$ for all $x\in X$. 
Let $C=\max\{\lvert c(1,x)\rvert\mid x\in X\}<\infty$. 
It is easy to check that 
$c(n{+}m,x)=c(n,\phi_1^m(x))+c(m,x)$ holds for all $m,n\in\Z$ and $x\in X$. 
For each $x\in X$, 
the map $m\mapsto c(m,x)$ is a bijection on $\Z$, 
and $\lvert c(m{+}1,x)-c(m,x)\rvert\leq C$. 
Therefore, we have two possibilities: 
$c(m,x)\to+\infty$ as $m\to+\infty$ or $c(m,x)\to-\infty$ as $m\to+\infty$. 
Without loss of generality, we may assume that the former holds. 
Define $b:X\to\Z$ by 
\[
b(x)=\lim_{N\to\infty}\#\{n\in\N\mid c(n,x)\leq N\}-N. 
\]
Indeed, there exists $M\in\N$ such that if $c(n,x)>M$ then $n>0$. 
So, $b(x)=\#\{n\in\N\mid c(n,x)\leq N\}-N$ if $N\geq M$. 
Especially, $b$ is continuous. 
Define a continuous map $\gamma:X\to X$ by $\gamma(x)=\phi_2^{-b(x)}(x)$. 
For sufficiently large $N$, we can verify 
\begin{align*}
b(\phi_1(x))&=\#\{n\in\N\mid c(n,\phi_1(x))\leq N\}-N\\
&=\#\{n\in\N\mid c(n{+}1,x)\leq N+c(1,x)\}-N\\
&=\#\{n\in\N\mid c(n,x)\leq N+c(1,x)\}-1-N\\
&=\#\{n\in\N\mid c(n,x)\leq N+c(1,x)\}-(N+c(1,x))+c(1,x)-1\\
&=b(x)+c(1,x)-1, 
\end{align*}
which implies $\phi_2\circ\gamma=\gamma\circ\phi_1$. 
Since every orbit of $\phi_1$ (or $\phi_2$) is infinite, 
one can show that $\gamma$ is a homeomorphism. 
It follows that $\phi_1$ is conjugate to $\phi_2$. 
\end{proof}

\begin{theorem}
Let $\phi$ be a minimal homeomorphism on a Cantor set $X$. 
\begin{enumerate}
\item $H_0(\G_\phi)$ is isomorphic to 
$C(X,\Z)/\{f-f\circ\phi\mid f\in C(X,\Z)\}$, 
$H_1(\G_\phi)\cong\Z$ and $H_n(\G_\phi)=0$ for $n\geq2$. 
\item $D([[\G_\phi]])$ is simple. 
\item The index map $I:[[\G_\phi]]\to H_1(\G_\phi)$ is surjective. 
\item $[[\G_\phi]]_0/D([[\G_\phi]])$ is isomorphic to 
$H_0(\G_\phi)\otimes\Z_2$. 
\end{enumerate}
\end{theorem}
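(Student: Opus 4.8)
The plan is to establish the four assertions by combining the general theory of almost finite groupoids (since $\G_\phi$ is almost finite by Proposition \ref{Z^N>almstfnt} for $N=1$) with the concrete structure of the crossed product by $\Z$. For part (1), I would recall from Example \ref{ExofH} (2) that $H_n(\G_\phi)\cong H_n(\Z,C(X,\Z))$, the group homology of $\Z$ with coefficients in the module $C(X,\Z)$ on which the generator acts by $f\mapsto f\circ\phi^{-1}$. Since $\Z$ has a free resolution of length one ($0\to\Z\Z\to\Z\Z\to\Z\to0$ with the first map multiplication by $t-1$), the homology is computed directly: $H_0=C(X,\Z)/(t-1)C(X,\Z)=C(X,\Z)/\{f-f\circ\phi\mid f\in C(X,\Z)\}$, $H_1=\Ker(t-1)=\{f\in C(X,\Z)\mid f=f\circ\phi\}$, which by minimality consists of the constants, hence $H_1(\G_\phi)\cong\Z$, and $H_n=0$ for $n\geq2$.

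For parts (2) and (3), these are immediate from the general results already quoted: part (2) is Theorem \ref{almstfnt/thm} (1) applied to the minimal almost finite groupoid $\G_\phi$, and part (3) is Theorem \ref{almstfnt/thm} (2). So no new work is required there beyond citing those theorems. Part (4) is the substantive point. By Theorem \ref{almstfnt/thm} (3), since $\G_\phi$ is principal (the action being free) and minimal, the AH conjecture holds, giving exactness of
\[
\begin{CD}
H_0(\G_\phi)\otimes\Z_2@>j>>[[\G_\phi]]_\ab@>I>>H_1(\G_\phi)@>>>0.
\end{CD}
\]
Restricting to $[[\G_\phi]]_0=\Ker I$, one gets that $[[\G_\phi]]_0/D([[\G_\phi]])$ is the image of $j$, so I must show $j$ is injective for minimal $\Z$-actions; equivalently, that $\G_\phi$ has the strong AH property. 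I would invoke the result of \cite{M06IJM} (summarized in Example \ref{ExofAH} (2)) that for $\Gamma=\Z$ the map $j$ is injective, so $[[\G_\phi]]_\ab\cong H_1(\G_\phi)\oplus(H_0(\G_\phi)\otimes\Z_2)$; then $[[\G_\phi]]_0/D([[\G_\phi]])=\Ker I/D([[\G_\phi]])\cong H_0(\G_\phi)\otimes\Z_2$ follows by chasing the splitting. Alternatively, one can prove injectivity of $j$ directly via Proposition \ref{TR}-type arguments or via the Bratteli–Vershik / Kakutani–Rokhlin tower model, tracking a transposition $\tau$ attached to a clopen set and showing it is non-trivial in the abelianization by pairing against a suitable $\Z_2$-valued invariant built from the towers.

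The main obstacle is the injectivity of $j$ in part (4): unlike parts (1)–(3), this does not follow from a formal citation of the almost-finite machinery (which only gives the AH conjecture, i.e. exactness at $[[\G_\phi]]_\ab$, not the stronger statement $0\to H_0\otimes\Z_2\to\cdots$). One needs the special feature of $\Z$-actions — the existence of Kakutani–Rokhlin towers that let one construct an explicit homomorphism $[[\G_\phi]]\to H_0(\G_\phi)\otimes\Z_2$ splitting $j$, as in \cite{M06IJM}. Everything else is either a one-line homological computation or a direct appeal to Theorem \ref{almstfnt/thm}.
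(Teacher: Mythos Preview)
Your treatment of (1)--(3) coincides with the paper's. For (1) the paper also just says ``obvious from Example \ref{ExofH} (2)''; your explicit computation with the length-one resolution of $\Z$ is a welcome elaboration. For (2) and (3) the paper likewise cites Theorem \ref{almstfnt/thm} (and for (3) additionally notes the one-line argument $I(\phi)=1$).

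For (4), however, your primary route has a circularity. You propose to invoke Theorem \ref{almstfnt/thm} (3) to get the AH exact sequence and then cite Example \ref{ExofAH} (2), i.e.\ \cite[Section 4]{M06IJM}, for the injectivity of $j$. But the statement recorded in Example \ref{ExofAH} (2) --- that $[[\G_\phi]]_\ab\cong H_1(\G_\phi)\oplus(H_0(\G_\phi)\otimes\Z_2)$ --- is precisely (4) together with the splitting; citing it here is citing the theorem you are proving. (Also, your aside about ``Proposition \ref{TR}-type arguments'' does not apply: Proposition \ref{TR} concerns purely infinite groupoids, and $\G_\phi$ is almost finite with $M(\G_\phi)\neq\emptyset$.)

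Your secondary suggestion --- Kakutani--Rokhlin towers --- is the right idea, and it is what the paper actually does, but the paper makes the construction explicit. Fix $y\in X$ and form the open subgroupoid
\[
\H=\G_\phi\setminus\{(n,\phi^m(y))\mid m\leq 0<n+m\text{ or }n+m\leq 0<m\},
\]
the ``orbit-breaking'' subgroupoid. This $\H$ is a minimal AF subgroupoid with $\H^{(0)}=X$; its topological full group $[[\H]]$ is an increasing union of finite direct sums of symmetric groups (Example \ref{AF/tfg}), so $D([[\H]])$ is a direct limit of alternating groups and $[[\H]]/D([[\H]])\cong H_0(\H)\otimes\Z_2$. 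Since the inclusion $\H\hookrightarrow\G_\phi$ induces $H_0(\H)\cong H_0(\G_\phi)$ and $[[\H]]\subset[[\G_\phi]]_0$, one finishes (after ``some extra work'' relating $D([[\H]])$ to $D([[\G_\phi]])$ and $[[\H]]$ to $[[\G_\phi]]_0$). This is more concrete than an abstract appeal to a $\Z_2$-valued invariant: the AF subgroupoid $\H$ is the object that produces both the signature map and the injectivity of $j$ simultaneously.
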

\begin{proof}
(1) is obvious because $H_n(\G_\phi)$ is isomorphic to 
the group homology $H_n(\Z,C(X,\Z))$ (see Example \ref{ExofH} (2)). 

(2) is a special case of Theorem \ref{almstfnt/thm} (1). 

(3) is clear because of $I(\phi)=1\in\Z=H_1(\G_\phi)$. 
(It can be also viewed as a special case of Theorem \ref{almstfnt/thm} (2)). 

We give a brief explanation of (4). 
See \cite{M06IJM} for a detailed proof. 
Fix a point $y\in X$. 
Define a subgroupoid $\H\subset\G_\phi=\Z\times X$ by 
\[
\H=\G_\phi\setminus\{(n,\phi^m(y))\mid m\leq0<n{+}m
\text{ \ or \ }n{+}m\leq 0<m\}. 
\]
Then $\H$ is open and 
becomes a minimal AF subgroupoid with the relative topology 
(\cite[Theorem 4.3]{GPS04ETDS}). 
Evidently $[[\H]]$ is a subgroup of $[[\G_\phi]]$. 
As observed in Example \ref{AF/tfg}, 
the topological full group $[[\H]]$ of the AF groupoid $\H$ is 
an increasing union of subgroups isomorphic to 
finite direct sums of symmetric groups, 
and the inclusion map of each step is given by the edge set $E_n$. 
It follows that $D([[\H]])$ is an increasing union of subgroups 
isomorphic to finite direct sums of alternating groups. 
Since $\H$ is minimal, 
we can easily deduce that $D([[\H]])$ is simple. 
From this, with some extra work, 
we get the simplicity of $D([[\G_\phi]])$. 
On the other hand, $[[\H]]/D([[\H]])$ is isomorphic to 
an inductive limit of finite direct sums of $\Z_2$, 
and the connecting map of each step is given by the edge set $E_n$. 
Hence one has $[[\H]]/D([[\H]])\cong H_0(\H)\otimes\Z_2$. 
The inclusion map $\H\to\G$ induces $H_0(\H)\cong H_0(\G_\phi)$, 
which implies $[[\H]]/D([[\H]])\cong H_0(\G_\phi)\otimes\Z_2$.
The index map $I$ kills all the elements of finite order, 
and so $[[\H]]$ is contained in $[[\G_\phi]]_0$. 
Then, with some extra work, 
we can show $[[\G_\phi]]_0/D([[\G_\phi]])\cong H_0(\G_\phi)\otimes\Z_2$.
\end{proof}

For $\alpha\in[[\G_\phi]]$, $I(\alpha)\in\Z$ is computed as follows 
(see \cite[Section 5]{GPS99Israel} for the detailed argument). 
Fix a $\phi$-invariant probability measure $\mu\in M(\G_\phi)$. 
By Example \ref{transf/tfg}, there exists a continuous map $n:X\to\Z$ 
such that $\alpha(x)=\phi^{n(x)}(x)$ for all $x\in X$. 
Define $I':[[\G_\phi]]\to\R$ by 
\[
I'(\alpha)=\int_Xn(x)\,d\mu(x). 
\]
It is easy to see that $I'$ is a homomorphism and $I'(\phi)=1$. 
By \cite[Lemma 5.3]{GPS99Israel} (or \cite[Lemma 4.1]{M06IJM}), 
the group $[[\G_\phi]]_0$ is generated by elements of finite order. 
Therefore the kernel of $I'$ contains $[[\G_\phi]]_0$. 
So, we can conclude $I=I'$. 

We say that a homeomorphism $\phi$ on a Cantor set is expansive 
if there exists a continuous map $f$ from $X$ to a finite set $A$ 
such that $f(\phi^n(x))=f(\phi^n(y))$ for all $n\in\Z$ implies $x=y$. 

\begin{theorem}[{\cite[Theorem 5.4, Theorem 5.7]{M06IJM}}]\label{Z/finite}
Let $\phi$ be a minimal homeomorphism on a Cantor set $X$. 
\begin{enumerate}
\item $D([[\G_\phi]])$ is finitely generated 
if and only if $\phi$ is expansive. 
\item $D([[\G_\phi]])$ never be finitely presented. 
\end{enumerate}
\end{theorem}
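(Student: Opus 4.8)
The plan is to reduce both parts to the case of a subshift and then argue separately on the two sides. Recall that $\phi$ is expansive exactly when some finite clopen partition $\mathcal{P}$ of $X$ has the property that $\mathcal{P}_\infty:=\bigvee_{n\in\Z}\phi^{-n}\mathcal{P}$ separates points, in which case the $\mathcal{P}$-address map conjugates $\phi$ to a minimal subshift on a finite alphabet; so ``not expansive'' means that $\mathcal{P}_\infty$ fails to separate points for \emph{every} finite clopen $\mathcal{P}$.

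For the necessity direction of (1) I would argue by contradiction. Suppose $S\subset D([[\G_\phi]])$ is a finite generating set; writing each $s\in S$ as $s(x)=\phi^{c_s(x)}(x)$ with $c_s:X\to\Z$ locally constant, let $\mathcal{P}$ be the common refinement of the finitely many partitions of $X$ into level sets of the $c_s$. Call $\alpha\in[[\G_\phi]]$ \emph{$\mathcal{P}$-tame} if its cocycle $c_\alpha$ is constant on the classes of $\mathcal{P}_\infty$. Since $\phi$ carries classes of $\mathcal{P}_\infty$ to classes of $\mathcal{P}_\infty$, the cocycle identity $c_{\alpha\beta}(x)=c_\alpha(\beta(x))+c_\beta(x)$ shows that the $\mathcal{P}$-tame elements form a subgroup $\Gamma_\mathcal{P}$ containing $S$, hence $D([[\G_\phi]])\subset\Gamma_\mathcal{P}$. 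If $\phi$ is not expansive then $\mathcal{P}_\infty$ does not separate points, so there are $x\neq y$ in a common $\mathcal{P}_\infty$-class; choosing a small clopen set $C$ with $x\in C$, $y\notin C$ and $C,\phi(C),\phi^2(C)$ pairwise disjoint (possible by aperiodicity of the minimal system), the $3$-cycle $\tau$ cyclically permuting these three sets is a commutator of two transpositions in $[[\G_\phi]]$, so $\tau\in D([[\G_\phi]])$, yet its cocycle equals $-2$ exactly on $\phi^2(C)$, which is not a union of $\mathcal{P}_\infty$-classes (since $C$ is not, as $x\in C\not\ni y$ with $x$ and $y$ in the same $\mathcal{P}_\infty$-class, and $\phi^2$ carries unions of $\mathcal{P}_\infty$-classes to unions of $\mathcal{P}_\infty$-classes). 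Thus $\tau\notin\Gamma_\mathcal{P}$, a contradiction.

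For the sufficiency direction of (1), which I expect to be the main obstacle, I would work with $\phi$ presented as a minimal subshift and construct an explicit finite generating set of $D([[\G_\phi]])$. Since $D([[\G_\phi]])$ is simple and generated by $3$-cycles, it suffices to produce finitely many $3$-cycles, supported on cylinder sets of a fixed bounded length, from which every element is a product; the key step is to rewrite a $3$-cycle of arbitrary complexity in terms of bounded-complexity ones. For this I would decompose $X$ into Kakutani--Rokhlin towers with cylinder bases that are tall relative to the range of the cocycle of the given element, realize the element's action inside each tower by a product of bounded-complexity moves, and use the recurrence function of the subshift to bound, once and for all, the complexity needed to handle the finitely many towers and their seams. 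Passing from this bookkeeping down to $D([[\G_\phi]])$ uses the identification $[[\G_\phi]]_0/D([[\G_\phi]])\cong H_0(\G_\phi)\otimes\Z_2$ from the previous theorem; one cannot simply quote finite generation of $[[\G_\phi]]$, since its abelianization $H_1(\G_\phi)\oplus(H_0(\G_\phi)\otimes\Z_2)$ need not be finitely generated, so the construction must target $D([[\G_\phi]])$ directly.

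For (2), if $\phi$ is not expansive then by (1) $D([[\G_\phi]])$ is not even finitely generated, so assume $\phi$ is a subshift and, for contradiction, that $D([[\G_\phi]])$ is finitely presented. I would then exploit a finiteness obstruction: express $D([[\G_\phi]])$ as an inductive limit of finitely presented groups built from the finite-level (Bratteli--Vershik, Kakutani--Rokhlin) approximations of the subshift, and invoke the standard fact that a finitely presented group arising as such an inductive limit is already ``finitely described'' at some finite level, i.e.\ its relation module stabilizes. One then shows that genuinely new relations keep appearing at every level because the subshift is minimal and infinite (its word-complexity function tends to infinity), contradicting stabilization. The main obstacle here is to pin down the correct finite-level groups and transition maps and to prove non-stabilization of the relations; alternatively one might argue via $H_2$, since finite presentability forces $H_2(D([[\G_\phi]]),\Z)$ to be finitely generated, whereas this homology group is expected to be infinitely generated for every minimal subshift.
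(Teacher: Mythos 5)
The half of (1) that you actually carry out, namely ``$D([[\G_\phi]])$ finitely generated $\Rightarrow$ $\phi$ expansive'', is essentially correct: refining by the level sets of the cocycles of the generators, noting that cocycles of products remain constant on $\mathcal{P}_\infty$-classes, and exhibiting a non-tame $3$-cycle from a pair $x\neq y$ in one class is a genuine contradiction. Two points should be tightened. Closure of the tame elements under inversion is not a formal consequence of the cocycle identity; you need either to refine $\mathcal{P}$ by the level sets of the cocycles of $S\cup S^{-1}$ and check tameness of words letter by letter, or to argue via injectivity of $\alpha$ together with aperiodicity of the minimal system. Also, your detour through the level set $\phi^2(C)$ is needed exactly because $y$ may lie in $\phi(C)$; that part is fine as written.

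The converse half of (1) and all of (2) are programmes rather than proofs, and they are where the substance of the theorem lies. For ``expansive $\Rightarrow$ finitely generated'' you name the right ingredients (Kakutani--Rokhlin towers over cylinder bases, bounded-complexity $3$-cycles, a rewriting step), but the rewriting step --- that every element of $D([[\G_\phi]])$ is a product of the finitely many bounded generators, uniformly over all towers and their seams --- is precisely the content of \cite[Theorem 5.4]{M06IJM} (equivalently of Nekrashevych's result for expansive almost finite groupoids cited after the theorem), and nothing in your outline substitutes for it; you also use without justification that $D([[\G_\phi]])$ is generated by $3$-cycles of the special form you need. For (2), the reduction to the expansive case via (1) is fine, but the finiteness obstruction is left entirely open: you do not specify the finite-level groups, the connecting maps, or why relations fail to stabilize, and the $H_2$ alternative rests on the unproved (and, to my knowledge, unknown) claim that $H_2(D([[\G_\phi]]),\Z)$ is infinitely generated for every minimal subshift. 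The known arguments supply exactly the missing input: Matui expresses $(X,\phi)$ as a decreasing intersection of two-sided shifts of finite type and shows a finite presentation would already be realized at a finite stage, forcing $(X,\phi)$ to behave like an SFT, which is impossible for an infinite minimal system; alternatively, Grigorchuk and Medynets prove that $[[\G_\phi]]$ is locally embeddable into finite groups, and an infinite \emph{simple} LEF group (such as $D([[\G_\phi]])$, by the simplicity statement in the preceding theorem) cannot be finitely presented. Your sketch provides neither the SFT-approximation mechanism nor the LEF property, so (2) remains unproved as it stands.
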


Notice that (1) is a special case of \cite[Theorem 5.6]{Ne15}. 
In particular, the same statement holds 
for any free minimal action $\phi:\Z^N\curvearrowright X$. 
(2) was shown in \cite{M06IJM} 
by expressing $(X,\phi)$ as a decreasing `intersection' of 
two-sided shifts of finite type. 
Later, R. Grigorchuk and K. Medynets \cite{GM14Sbornik} proved that 
$[[\G_\phi]]$ is locally embeddable into finite groups, 
which implies that $D([[\G_\phi]])$ never be finitely presented. 
We do not know if $D([[\G_\phi]])$ can be finitely presented 
when $\phi$ is a free minimal action of $\Z^N$. 

K. Juschenko and N. Monod obtained the following remarkable result. 

\begin{theorem}[{\cite[Theorem A]{JM13Ann}}]\label{Z/amenable}
Let $\phi$ be a minimal homeomorphism on a Cantor set $X$. 
Then $[[\G_\phi]]$ is amenable. 
\end{theorem}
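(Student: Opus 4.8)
The plan is to reduce amenability of $[[\G_\phi]]$ to that of finitely generated groups of bounded-displacement permutations of $\Z$, and to prove the latter by a Reiter-type argument driven by recurrence of the simple random walk on $\Z$. First I would pass to the orbit: a minimal homeomorphism of an infinite compact space has no periodic points, so $\phi$ is free; fixing $x_0\in X$, the map $n\mapsto\phi^n(x_0)$ identifies $\Z$ with the $\phi$-orbit of $x_0$. By Example~\ref{transf/tfg} every $\alpha\in[[\G_\phi]]$ has the form $\alpha(x)=\phi^{c(x)}(x)$ with $c\colon X\to\Z$ continuous, hence bounded since $X$ is compact, so the restriction $\bar\alpha$ of $\alpha$ to the orbit is a bijection of $\Z$ with $\sup_n\lvert\bar\alpha(n)-n\rvert<\infty$. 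Because the orbit is dense and $\alpha$ is a homeomorphism, $\alpha\mapsto\bar\alpha$ is injective; thus $[[\G_\phi]]$ embeds into the group of bounded-displacement (``wobbling'') bijections of $\Z$, with the translation $n\mapsto n+1$ in the image. Since amenability is a local property, it suffices to prove that every finitely generated subgroup $G\subset[[\G_\phi]]$ is amenable.

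The core step is to show that the action $G\curvearrowright\Z$ is \emph{extensively amenable}: one formulation is that the affine action of $\bigl(\bigoplus_{\Z}\Z_2\bigr)\rtimes G$ on $\bigoplus_{\Z}\Z_2$ — with $G$ permuting coordinates through its action on $\Z$ and $\bigoplus_\Z\Z_2$ acting by translations (symmetric differences of finite subsets) — admits an invariant mean; equivalently, there is a $G$-invariant mean on the finite subsets of $\Z$ that is also invariant under symmetric differences and therefore ``concentrated on large sets''. Recurrence enters here: as $G$ is finitely generated and acts by bounded displacements, each orbital Schreier graph of $G\curvearrowright\Z$ has at most linear growth and is therefore recurrent for the simple random walk. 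One then constructs an approximately invariant sequence of probability measures on the finite subsets of $\Z$ for the semidirect product, roughly by running a long recurrent walk and recording the set of sites visited: recurrence keeps the resulting configuration from drifting, which is precisely what makes the measure nearly fixed by $G$ as well as by $\bigoplus_\Z\Z_2$. Making this Reiter estimate close — which works because $\Z$ (and $\Z^2$) is recurrent, whereas the same scheme breaks down over $\Z^3$ — is the genuinely new ingredient, and I expect it to be the main obstacle.

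It remains to feed this into the Juschenko--Monod combination principle: if $G\curvearrowright X$ is extensively amenable and some amenable subgroup $H\le G$ acts on $X$ with finitely many orbits, then $G$ is amenable. Applied with $X=\Z$ and $H=\langle\phi\rangle\cong\Z$, which acts transitively, this gives amenability of every finitely generated $G\subset[[\G_\phi]]$, and hence of $[[\G_\phi]]$. The weight of the argument thus rests on two supporting facts — the basic theory of extensive amenability (its definition, its hereditary properties, and the combination principle just quoted) and the random-walk construction of the preceding paragraph; the reductions of the first paragraph are routine.
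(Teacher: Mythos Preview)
Your reduction to bounded-displacement permutations of $\Z$ is correct, and proving extensive amenability of $G\curvearrowright\Z$ via recurrence of the orbital Schreier graphs (which have at most linear growth) is a legitimate route --- this is the criterion from \cite{JNS13} rather than the direct argument of \cite{JM13Ann}, but it yields the same conclusion. The gap is in your final step. The ``combination principle'' you invoke --- that extensive amenability of $G\curvearrowright X$ together with an amenable subgroup $H\le G$ acting with finitely many orbits forces $G$ to be amenable --- is not a theorem. Were it true, it would apply verbatim with $G=W(\Z)$ and $H=\Z$ the translation subgroup, giving amenability of $W(\Z)$ itself; this is open. Notice that your outline uses no feature of $[[\G_\phi]]$ beyond its being a subgroup of $W(\Z)$ containing the shift, and that cannot be enough.

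What the actual proof supplies in place of your principle is the \emph{twisted} embedding $\tilde\pi:[[\G_\phi]]\to\mathcal{P}_f(\Z)\rtimes W(\Z)$, $\tilde\pi(\gamma)=(\N\Delta\pi(\gamma)(\N),\pi(\gamma))$, available because $\N$ is commensurated by every element of $W(\Z)$. Under this embedding the stabilizer of $E\in\mathcal{P}_f(\Z)$ is the setwise stabilizer in $[[\G_\phi]]$ of the half-line-like set $E\Delta\N$, and the decisive dynamical input (\cite[Lemma~4.1]{JM13Ann}) is that each such stabilizer is locally finite. The proof of that lemma genuinely uses that elements of $[[\G_\phi]]$ come from \emph{continuous} cocycles $c:X\to\Z$ on a compact minimal system without periodic points --- this is exactly where the structure of the topological full group enters, and it is the piece your argument needs in order to pass from extensive amenability to amenability of $[[\G_\phi]]$.
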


In the proof of this theorem, 
the notion of extensive amenability plays the central role. 
This property was first introduced (without a name) in \cite{JM13Ann}, 
and studied further in \cite{JNS13,JBMS15}. 

We recall the definition of extensive amenability 
from \cite[Definition 1.1]{JBMS15}. 
Let $\alpha:G \curvearrowright Z$ be 
an action of a discrete group $G$ on a set $Z$. 
We denote by $\mathcal{P}_f(Z)$ the set of all finite subsets of $Z$. 
The collection $\mathcal{P}_f(Z)$ is an abelian group 
for the operation $\Delta$ of symmetric difference. 
The action $\alpha$ naturally extends to 
$\alpha:G\curvearrowright\mathcal{P}_f(Z)$. 
We say that $\alpha:G\curvearrowright Z$ is extensively amenable 
if there exists a $G$-invariant mean 
(i.e. finitely additive probability measure) $m$ on $\mathcal{P}_f(Z)$ 
such that $m(\{F\in\mathcal{P}_f(Z)\mid E\subset F\})=1$ 
for any $E\in\mathcal{P}_f(Z)$. 
In \cite[Lemma 3.1]{JM13Ann}, it was shown that 
$\alpha:G\curvearrowright Z$ is extensively amenable if and only if 
the action of $\mathcal{P}_f(Z)\rtimes G$ on $\mathcal{P}_f(Z)$ 
admits an invariant mean. 

We denote by $W(\Z)$ the group of all permutations $g$ of $\Z$ 
for which the quantity $\sup\{\lvert g(j)-j\rvert\mid j\in\Z\}$ is finite. 
In \cite[Theorem C]{JM13Ann}, it was shown that 
the natural action $W(\Z)\curvearrowright\Z$ is extensively amenable. 
(This part is technically quite hard.) 
It follows that 
the action of $\mathcal{P}_f(\Z)\rtimes W(\Z)$ on $\mathcal{P}_f(\Z)$ 
admits an invariant mean. 

Let $\phi$ be a minimal homeomorphism on a Cantor set $X$. 
We would like to show that $[[\G_\phi]]$ is amenable. 
Fix a point $x\in X$. 
We can define a map $\pi:[[\G_\phi]]\to W(\Z)$ 
so that $\gamma(\phi^j(x))=\phi^{\pi(\gamma)(j)}(x)$ 
for every $\gamma\in[[\G_\phi]]$ and $j\in\Z$. 
The map $\pi$ is an injective homomorphism. 
Define a map $\tilde\pi:[[\G_\phi]]\to\mathcal{P}_f(\Z)\rtimes W(\Z)$ 
by $\tilde\pi(\gamma)=(\N\Delta\pi(\gamma)(\N),\pi(\gamma))$ 
for $\gamma\in[[\G_\phi]]$. 
It is routine to check that $\tilde\pi$ is an injective homomorphism. 
Since the action of $\mathcal{P}_f(\Z)\rtimes W(\Z)$ on $\mathcal{P}_f(\Z)$ 
admits an invariant mean, in order to show the amenability of $[[\G_\phi]]$, 
it suffices to prove that 
the stabiliser in $\tilde\pi([[\G_\phi]])$ of $E$ is amenable 
for any $E\in\mathcal{P}_f(\Z)$. 
By \cite[Lemma 4.1]{JM13Ann}, the stabiliser is locally finite, 
and hence amenable. 
This completes the proof. 

In \cite{JNS13,JBMS15}, 
the notion of extensive amenability is used 
to prove amenability of various kinds of groups. 
Among others, 
it was shown that all subgroups of the group of 
interval exchange transformations that have angular components 
of rational rank $\leq2$ are amenable (\cite[Theorem 5.1]{JBMS15}). 
In particular, 
when $\phi:\Z^2\curvearrowright X$ is a free minimal action 
arising from two irrational rotations on the circle 
(see \cite[Example 30]{GPS04Abel}), 
the topological full group $[[\G_\phi]]$ is amenable. 
On the other hand, it is known that 
there exists a free minimal action $\phi:\Z^2\curvearrowright X$ 
on a Cantor set 
such that $[[\G_\phi]]$ contains the non-abelian free group 
(\cite{EM13PAMS}). 
It may be a rather complicated problem to determine 
when $[[\G_\phi]]$ is amenable for $\phi:\Z^2\curvearrowright X$.

\subsection{Shifts of finite type}

In this subsection, 
we would like to review the results about \'etale groupoids 
of one-sided shifts of finite type (see Definition \ref{SFT/def}). 
Note that these groupoids are purely infinite and minimal 
(\cite[Lemma 6.1]{M15crelle}). 

\begin{theorem}[{\cite[Theorem 3.6]{MM14Kyoto}}]
Let $(X_A,\sigma_A)$ and $(X_B,\sigma_B)$ be 
two irreducible one-sided shifts of finite type. 
The following conditions are equivalent. 
\begin{enumerate}
\item $(X_A,\sigma_A)$ and $(X_B,\sigma_B)$ are 
continuously orbit equivalent. 
\item The \'etale groupoids $\G_A$ and $\G_B$ are isomorphic. 
\item There exists an isomorphism $\phi:C^*_r(\G_A)\to C^*_r(\G_B)$ 
such that $\phi(C(X_A))=C(X_B)$. 
\item There exists an isomorphism $\pi:H_0(\G_A)\to H_0(\G_B)$ 
such that $\pi([1_{X_A}])=[1_{X_B}]$ and 
$\sgn(\det(\id-A))=\sgn(\det(\id-B))$. 
\end{enumerate}
\end{theorem}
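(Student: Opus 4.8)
The plan is to prove the cycle of equivalences (1) $\Rightarrow$ (2) $\Rightarrow$ (3) $\Rightarrow$ (4) $\Rightarrow$ (1), leaning on Renault's reconstruction theorem (Theorem \ref{Renault}) and on the structure theory developed earlier for SFT groupoids. The implication (2) $\Rightarrow$ (3) is essentially immediate from Theorem \ref{Renault}: an isomorphism of the \'etale groupoids $\G_A$ and $\G_B$ induces an isomorphism of the reduced $C^*$-algebras carrying $C_0$ of the unit space to $C_0$ of the unit space, and here the unit spaces are the compact sets $X_A$ and $X_B$. The converse direction (3) $\Rightarrow$ (2) is again exactly Theorem \ref{Renault}, since $\G_A$ and $\G_B$ are essentially principal. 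For (1) $\Leftrightarrow$ (2), I would recall that continuous orbit equivalence of one-sided shifts of finite type is \emph{defined} (following Matsumoto) by the existence of a homeomorphism $h:X_A\to X_B$ together with continuous cocycle-type functions intertwining the shift maps; the standard translation is that such data is precisely the data of a groupoid isomorphism $\G_A\cong\G_B$ that restricts to $h$ on the unit spaces. This dictionary is the content of \cite{MM14Kyoto} and I would cite it, sketching only that a groupoid element $(x,k-l,y)$ with $\sigma_A^k(x)=\sigma_A^l(y)$ gets sent to $(h(x), *, h(y))$ where the middle integer is read off from the orbit cocycle.

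The substantive analytic input is the equivalence of (4) with the others, and here the plan is to use Example \ref{ExofH} (3), which computes $H_0(\G_A)\cong\Coker(\id-A^t)$, together with the structure of SFT groupoids as purely infinite minimal groupoids. For (2) or (3) $\Rightarrow$ (4): a groupoid isomorphism induces an isomorphism $H_0(\G_A)\cong H_0(\G_B)$ of the homology groups, and it visibly sends the class $[1_{X_A}]$ of the unit to $[1_{X_B}]$ since the isomorphism carries unit space to unit space. The sign condition $\sgn(\det(\id-A))=\sgn(\det(\id-B))$ is a Bowen–Franks-type invariant; the point is that it is \emph{not} detected by $(H_0,[1])$ alone (Coker$(\id-A^t)$ together with the position of the unit class is the classical Bowen–Franks group with its distinguished element, which for Cuntz–Krieger algebras classifies only up to the determinant sign), so it must be carried along as extra data. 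I would argue that the groupoid isomorphism, via the induced isomorphism on $K$-theory of $C^*_r(\G_A)=\mathcal{O}_A$ and the Kirchberg–Phillips classification already implicit in the literature, or more directly via Matsumoto's refinement, forces the determinant signs to agree.

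The genuinely hard implication — and the main obstacle — is (4) $\Rightarrow$ (1): reconstructing a continuous orbit equivalence (equivalently, by the dictionary above, a groupoid isomorphism) out of the purely algebraic data of an order-unit isomorphism $H_0(\G_A)\cong H_0(\G_B)$ plus the determinant-sign matching. The plan here is to invoke the classification of Cuntz–Krieger algebras: the pair $(\Coker(\id-A^t),[1_{X_A}])$ together with $\sgn(\det(\id-A))$ is a complete invariant for $\mathcal{O}_A=C^*_r(\G_A)$ as a unital $C^*$-algebra in the relevant class (this is the content of Rørdam's classification of Cuntz–Krieger algebras and its predecessors). Given the isomorphism of invariants, one obtains an isomorphism $C^*_r(\G_A)\cong C^*_r(\G_B)$; the remaining work is to arrange that this isomorphism can be chosen to carry the canonical diagonal $C(X_A)$ onto $C(X_B)$, which yields (3), hence (2), hence (1) by the earlier steps. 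I expect this diagonal-preservation step to be the crux, and I would handle it by citing the refined results of Matsumoto and Matui \cite{MM14Kyoto} where precisely this point is established; a self-contained argument would require showing that the diagonal is, up to the groupoid's automorphisms, a canonical subalgebra pinned down by the invariants, which is exactly the delicate part of continuous orbit equivalence theory.
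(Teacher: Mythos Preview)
Your treatment of (1)$\Leftrightarrow$(2)$\Leftrightarrow$(3) matches the paper's. The gaps are in both directions of the equivalence with (4), and they are substantive.

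For (2)$\Rightarrow$(4), your proposed argument for the sign condition does not work. You invoke ``the induced isomorphism on $K$-theory \dots and the Kirchberg--Phillips classification'' to force $\sgn(\det(\id-A))=\sgn(\det(\id-B))$. But the determinant sign is \emph{not} a $K$-theoretic invariant of $\mathcal{O}_A$: there exist irreducible matrices $A,B$ with $\mathcal{O}_A\cong\mathcal{O}_B$ as $C^*$-algebras yet opposite signs of $\det(\id-A)$. So nothing in Kirchberg--Phillips or R{\o}rdam extracts the sign from $K_*$ alone. The paper's route is different and specific: from a groupoid isomorphism one shows that the Boyle--Handelman ordered cohomology groups $(\bar H^A,\bar H^A_+)$ of the associated \emph{two-sided} shifts are isomorphic; Boyle--Handelman's theorem then gives flow equivalence of $(\bar X_A,\bar\sigma_A)$ and $(\bar X_B,\bar\sigma_B)$, and flow equivalence preserves $\sgn(\det(\id-A))$ (Parry--Sullivan, Bowen--Franks). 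The ordered-cohomology step is the genuine content you are missing.

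For (4)$\Rightarrow$(3), your plan is to first obtain \emph{some} isomorphism $\mathcal{O}_A\cong\mathcal{O}_B$ via R{\o}rdam and then ``arrange'' diagonal preservation by citing \cite{MM14Kyoto}. This is circular: the diagonal-preserving upgrade is exactly the theorem under discussion. Moreover, R{\o}rdam's classification does not use the sign; the sign is precisely the extra datum that distinguishes diagonal-preserving isomorphism from mere $C^*$-isomorphism, so your first step discards the very hypothesis you need. The paper instead goes through flow equivalence directly: Franks' theorem says that $H_0(\G_A)\cong H_0(\G_B)$ together with the sign condition implies $(\bar X_A,\bar\sigma_A)$ and $(\bar X_B,\bar\sigma_B)$ are flow equivalent; Cuntz--Krieger's original construction then yields a \emph{stable} isomorphism $C^*_r(\G_A)\otimes\K\to C^*_r(\G_B)\otimes\K$ carrying $C(X_A)\otimes\mathcal{C}$ onto $C(X_B)\otimes\mathcal{C}$; finally, the hypothesis $\pi([1_{X_A}])=[1_{X_B}]$ lets one cut down to a unital diagonal-preserving isomorphism. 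The passage through two-sided flow equivalence and the Cuntz--Krieger stable result is the missing mechanism in your outline.
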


For the definition of continuous orbit equivalence, 
see \cite[Section 2.1]{MM14Kyoto}. 
As mentioned in Example \ref{ExofH} (3), we have 
\[
H_n(\G_A)\cong\begin{cases}\Coker(\id-A^t)&n=0\\
\Ker(\id-A^t)&n=1\\0&n\geq2. \end{cases}
\]
The element $[1_{X_A}]\in H_0(\G_A)$ corresponds to 
the equivalence class of $(1,1,\dots,1)$ in $\Coker(\id-A^t)$. 

\begin{proof}
We present a sketchy proof of the theorem above. 
The equivalence (1)$\iff$(2) directly follows from the definition. 
The equivalence (2)$\iff$(3) is a special case of Theorem \ref{Renault}. 

(4)$\Rightarrow$(3). 
Let $(\bar{X}_A,\bar{\sigma}_A)$ and $(\bar{X}_B,\bar{\sigma}_B)$ 
denote the two-sided shifts of finite type. 
By the result of J. Franks \cite{F84ETDS}, 
$H_0(\G_A)\cong H_0(\G_B)$ and $\sgn(\det(\id-A))=\sgn(\det(\id-B))$ 
imply that $(\bar{X}_A,\bar{\sigma}_A)$ and $(\bar{X}_B,\bar{\sigma}_B)$ 
are flow equivalent. 
It follows from \cite[Theorem 4.1]{CK80Invent} that 
there exists an isomorphism 
$\phi:C^*_r(\G_A)\otimes\K\to C^*_r(\G_B)\otimes\K$ 
such that $\phi(C(X_A)\otimes\mathcal{C})=C(X_B)\otimes\mathcal{C}$, 
where $\mathcal{C}\cong c_0(\Z)$ is the maximal abelian subalgebra of $\K$ 
consisting of diagonal operators. 
Since the isomorphism $\pi:H_0(\G_A)\to H_0(\G_B)$ carries 
$[1_{X_A}]$ to $[1_{X_B}]$, 
a suitable modification of $\phi$ yields the desired isomorphism. 

(2)$\Rightarrow$(4). 
Clearly, 
$\G_A\cong\G_B$ implies $(H_0(\G_A),[1_{X_A}])\cong(H_0(\G_B),[1_{X_B}])$. 
The ordered cohomology group of $(\bar{X}_A,\bar{\sigma}_A)$, 
introduced by M. Boyle and D. Handelman \cite{BH96Israel}, is 
the abelian group 
$\bar{H}^A=C(\bar X_A,\Z)
/\{\xi-\xi\circ\bar\sigma_A\mid\xi\in C(\bar X_A,\Z)\}$ 
with the positive cone $\bar H^A_+=\{[\xi]\in\bar H^A\mid\xi\geq0\}$. 
We can prove that 
$\G_A\cong\G_B$ implies $(\bar H^A,\bar H^A_+)\cong(\bar H^B,\bar H^B_+)$ 
(see \cite[Theorem 3.5]{MM14Kyoto}). 
Then, by \cite[Theorem 1.12]{BH96Israel}, 
$(\bar{X}_A,\bar{\sigma}_A)$ and $(\bar{X}_B,\bar{\sigma}_B)$ 
are flow equivalent. 
As a result, we obtain $\sgn(\det(\id-A))=\sgn(\det(\id-B))$. 
\end{proof}

\begin{theorem}\label{SFT/thm}
Let $\G_A$ be an SFT groupoid. 
\begin{enumerate}
\item $D([[\G_A]])$ is simple. 
\item The index map $I:[[\G_A]]\to H_1(\G_A)$ is surjective. 
\item $[[\G_A]]_0/D([[\G_A]])$ is isomorphic to $H_0(\G_A)\otimes\Z_2$. 
\end{enumerate}
\end{theorem}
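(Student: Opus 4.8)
The plan is to deduce all three statements by combining the general results already available for purely infinite groupoids with the explicit identification of the homology groups of $\G_A$. Since $\G_A$ is purely infinite and minimal (\cite[Lemma 6.1]{M15crelle}), statement (1) is immediate from Theorem \ref{pi/thm} (1), and statement (2) is immediate from Theorem \ref{pi/thm} (2). So the real content is statement (3), namely that $[[\G_A]]_0/D([[\G_A]])\cong H_0(\G_A)\otimes\Z_2$.

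For (3) I would proceed by establishing the AH conjecture for $\G_A$ in its \emph{strong} form, i.e. by producing a short exact sequence
\[
0\longrightarrow H_0(\G_A)\otimes\Z_2
\stackrel{j}{\longrightarrow}[[\G_A]]_\ab
\stackrel{I}{\longrightarrow}H_1(\G_A)\longrightarrow0.
\]
First I would recall that $[[\G_A]]$ has the strong AH property (Example \ref{ExofAH} (3), from \cite[Corollary 6.24]{M15crelle}), so that $[[\G_A]]_\ab\cong H_1(\G_A)\oplus(H_0(\G_A)\otimes\Z_2)$ with the direct-summand $H_1(\G_A)$ detected by the index map $I$. Restricting the exact sequence to $[[\G_A]]_0=\Ker I$ then gives $[[\G_A]]_0/D([[\G_A]])\cong H_0(\G_A)\otimes\Z_2$; here one uses that the torsion-type classes $\tau$ of Section 4 all lie in $\Ker I$ (the index map kills elements of finite order, as recorded for the $\Z$-case) and that $D([[\G_A]])\subset[[\G_A]]_0$, which holds because $H_1(\G_A)$ is abelian so $I$ annihilates commutators. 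I would also note that by Example \ref{ExofH} (3) these groups are completely concrete: $H_0(\G_A)\cong\Coker(\id-A^t)$ and $H_1(\G_A)\cong\Ker(\id-A^t)$.

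The main obstacle is the injectivity of $j:H_0(\G_A)\otimes\Z_2\to[[\G_A]]_\ab$, equivalently the fact that the strong AH property holds rather than just the AH conjecture. One route is to invoke \cite[Corollary 6.24]{M15crelle} as a black box, in which case the proof is a short formal deduction. If instead one wants an independent argument, the strategy would be the telescoping-groupoid technique of Proposition \ref{TR} and Example \ref{SFT/AH}: replace $\G_A$ by an isomorphic groupoid $\G_B$ coming from a topologically mixing SFT, use the degree homomorphism $c:\G_B\to\Z$ whose kernel $\H$ is a minimal AF groupoid with a unique invariant measure, and transport the strong AH property of the AF groupoid $\H$ (Example \ref{ExofAH} (1)) up to $\G_B$ along the Pimsner--Voiculescu-type six-term sequence relating $H_*(\H)$, $H_*(\G_B)$ and the abelianizations; the hard point there is controlling the behaviour of the $\Z_2$-part under this induction, which is exactly where the uniqueness of the invariant measure and the scaling condition $\mu(r(U))=\lambda\mu(s(U))$ are used to rule out a kernel. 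Either way, (1) and (2) cost nothing, and all the difficulty is concentrated in pinning down the $2$-torsion in (3).
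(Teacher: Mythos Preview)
Your treatment of (1) and (2), and your black-box route for (3) via \cite[Corollary 6.24]{M15crelle}, are correct and coincide with what the paper does first: it too starts from Example \ref{SFT/AH} (which gives exactness of $H_0(\G_A)\otimes\Z_2\to[[\G_A]]_\ab\to H_1(\G_A)\to0$) and then cites \cite[Section 6.6]{M15crelle} for the injectivity and splitting of $j$. Your formal deduction of $[[\G_A]]_0/D([[\G_A]])\cong H_0(\G_A)\otimes\Z_2$ from the split short exact sequence is also fine.

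Your proposed \emph{alternative} argument, however, has a genuine gap. Proposition \ref{TR} (applied as in Example \ref{SFT/AH}) only delivers the AH conjecture, not the strong AH property: it gives exactness at $[[\G_A]]_\ab$ and surjectivity of $I$, but says nothing about $\Ker j$. The uniqueness of the invariant measure and the scaling condition $\mu(r(U))=\lambda\mu(s(U))$ in Proposition \ref{TR} (1) are used to show that $\G$ is purely infinite and that AH holds; they do not rule out a kernel for $j$. So transporting the strong AH property of the AF kernel $\H$ up to $\G_B$ along this route does not work as you describe, and the paper itself flags this by writing, after Example \ref{SFT/AH}, that strong AH is known by a separate argument.

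The paper's actual alternative is quite different from yours and worth noting. Rather than inducting from the AF subgroupoid, it uses the embedding theorem (Theorem \ref{embedding}): for each homomorphism $\phi:H_0(\G_A)\to\Z_2\cong H_0(\G_{[3]})$ one chooses a clopen $Y\subset X_{[3]}$ with $[1_Y]=\phi([1_{X_A}])$, sets $\H=\G_{[3]}|Y$ (so $[[\H]]_\ab\cong\Z_2$ is known explicitly from the Higman--Thompson case), and obtains an embedding $\pi_*:[[\G_A]]\to[[\H]]$ satisfying $\pi_*\circ j=j\circ(\phi\otimes\id)$. Letting $\phi$ range over all homomorphisms $H_0(\G_A)\to\Z_2$ then separates points of $H_0(\G_A)\otimes\Z_2$, proving $j$ injective with a left inverse. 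This avoids the finite-presentation computation of \cite{M15crelle} but costs the embedding theorem, which rests on the universal property of Cuntz--Krieger algebras.
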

\begin{proof}
(1) and (2) immediately follow from Theorem \ref{pi/thm} (1) and (2). 

(3). 
By Example \ref{SFT/AH}, 
\[
\begin{CD}
H_0(\G_A)\otimes\Z_2@>j>>[[\G_A]]_\ab@>>>H_1(\G_A)@>>>0
\end{CD}
\]
is exact. 
It suffices to show that $j$ is injective and has a left inverse. 
This was shown in \cite[Section 6.6]{M15crelle}, 
by using a finite presentation of $[[\G_A]]$. 
Here, we would like to describe another approach. 

As mentioned in Example \ref{SFT/tfg}, 
when $A$ is a $1\times1$ matrix $[n]$, 
$[[\G_{[n]}]]$ is the Higman-Thompson group $V_{n,1}$. 
It is well-known that 
the abelianization of $V_{n,1}$ is trivial if $n$ is even, 
and is $\Z_2$ if $n$ is odd. 
Suppose that an SFT groupoid $\G_A$ is given. 
Let $\phi:H_0(\G_A)\to\Z_2\cong H_0(\G_{[3]})$ be a homomorphism. 
Choose a nonempty clopen set $Y\subset X_{[3]}$ 
so that $[1_Y]=\phi([1_{X_A}])$. 
Set $\H=\G_{[3]}|Y$. 
We have $[[\H]]_\ab\cong\Z_2$. 
By Theorem \ref{embedding}, 
we can find an embedding $\pi:C^*_r(\G_A)\to C^*_r(\H)$ 
which induces 
\[
\begin{CD}
1@>>>U(C(\G_A^{(0)}))@>>>
N(C(X_A),C^*_r(\G_A))@>>>[[\G_A]]@>>>1\\
@.@V\pi VV@V\pi VV@VVV@.\\
1@>>>U(C(Y))@>>>
N(C(Y),C^*_r(\H))@>>>[[\H]]@>>>1, 
\end{CD}
\]
and $\phi([1_P])=[\pi(1_P)]$ for any clopen set $P\subset X_A$. 
Let us denote the embedding $[[\G_A]]\to[[\H]]$ 
(and also the induced homomorphism $[[\G_A]]_\ab\to[[\H]]_\ab$) by $\pi_*$. 
Let $U$ be a compact open $\G_A$-set 
satisfying $r(U)\cap s(U)=\emptyset$. 
Define $\tau_U\in[[\G_A]]$ by 
\[
\tau_U(x)=\begin{cases}\theta(U)(x)&x\in s(U)\\
\theta(U^{-1})(x)&x\in r(U)\\x&\text{otherwise. }\end{cases}
\]
Then $j([1_{s(U)}]\otimes\bar{1})$ equals 
the equivalence class of $\tau_U$ in $[[\G_A]]_\ab$. 
It is easy to see that 
the equivalence class of $\pi_*(\tau_U)$ equals 
$j(\phi([1_{s(U)}])\otimes\bar{1})$. 
Thus, 
$\pi_*(j([1_{s(U)}]\otimes\bar{1}))=j(\phi([1_{s(U)}])\otimes\bar{1})$. 
Hence $\pi_*\circ j=j\circ(\phi\otimes\id)$. 
Since 
the homomorphism $\phi:H_0(\G_A)\to\Z_2\cong H_0(\G_{[3]})$ was arbitrary, 
we obtain the desired conclusion. 
\end{proof}

In the proof above, we use the following embedding theorem. 

\begin{theorem}[{\cite[Proposition 5.14]{M15}}]\label{embedding}
Let $\G_A$ be an SFT groupoid and 
let $\H$ be a minimal, purely infinite \'etale groupoid. 
Suppose that $\phi:H_0(\G_A)\to H_0(\H)$ is a homomorphism 
satisfying $\phi([1_{X_A}])=[1_{\H^{(0)}}]$. 
Then there exists a unital homomorphism 
$\pi:C^*_r(\G_A)\to C^*_r(\H)$ such that the following hold. 
\begin{enumerate}
\item $\pi(C(X_A))\subset C(\H^{(0)})$. 
\item For any compact open $\G_A$-set $U$, 
there exists a compact open $\H$-set $V$ 
such that $\pi(1_U)=1_V$. 
\item For any clopen set $P\subset X_A$, 
$[\pi(1_P)]=\phi([1_P])$ in $H_0(\H)$. 
\end{enumerate}
In particular, 
$\pi$ induces an embedding of $[[\G_A]]$ into $[[\H]]$. 
\end{theorem}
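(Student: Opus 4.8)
The plan is to build the unital homomorphism $\pi:C^*_r(\G_A)\to C^*_r(\H)$ concretely, using the fact that $\G_A$ is generated (as a $C^*$-algebra) by the partial isometries $s_\mu=1_{U_{\mu,\emptyset}}$ associated with cylinder paths $\mu$, and exploiting the purely infinite, minimal structure of $\H$ to find enough room inside $C^*_r(\H)$ to receive their images. First I would recall from Example \ref{SFT/tfg} (and the Cuntz--Krieger presentation of $\mathcal{O}_A$) that $C^*_r(\G_A)$ is the universal $C^*$-algebra generated by a family of partial isometries $\{s_e\}_{e\in E}$ whose range projections $s_es_e^*$ are pairwise orthogonal and satisfy $s_e^*s_e=\sum_{t(e)=i(f)}s_fs_f^*$, together with $\sum_e s_es_e^*=1$; here the projections $1_{C_\mu}$ for paths $\mu$ sit in $C(X_A)$ and $[1_{C_\mu}]\in H_0(\G_A)$ is the class we must track. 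The target data we need is: for each vertex $v\in V$ a projection $p_v\in C(\H^{(0)})$, mutually orthogonal, summing to $1$, with $[p_v]$ equal to the image under $\phi$ of $[1_{C_\mu}]$ for any path $\mu$ with $t(\mu)=v$ —this is forced— and for each edge $e$ a partial isometry $S_e$ in $C^*_r(\H)$, of the form $1_V$ for a compact open $\H$-set $V$, with $S_eS_e^*\le p_{i(e)}$ orthogonal across $e$, and $S_e^*S_e=p_{t(e)}$; then setting $\pi(s_e)=S_e$ and extending by universality gives (1), (2) and, by construction, (3).

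The key steps, in order, are: (a) choose the $p_v$. Since $\H$ is purely infinite, by \cite[Lemma 5.3]{M15crelle} every class in $H_0(\H)$ is represented by a clopen subset of $\H^{(0)}$, and by the Riesz-type manipulations available in purely infinite groupoids one can choose the representatives $p_v$ to be pairwise disjoint clopen sets with $\bigsqcup_v p_v=\H^{(0)}$, using that $\phi$ respects the unit $[1_{X_A}]=\sum_{v}[1_{C_\mu}]_{t(\mu)=v}\mapsto[1_{\H^{(0)}}]$. (b) For each vertex $v$, the relation demands that $p_v$ be cut up into pieces indexed by the incoming edges at $v$ and each such piece be $\H$-equivalent (via a compact open $\H$-set) to $p_{i(e)}$ suitably rescaled —more precisely, inside $p_{i(e)}\H p_{i(e)}$ one needs mutually orthogonal compact open $\H$-sets, one for each $e$ with fixed initial vertex, whose ranges tile $p_{i(e)}$ and whose sources are $p_{t(e)}$. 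This is exactly where pure infiniteness is essential: properly infinite clopen sets can be subdivided and matched up freely, so one realizes the Cuntz--Krieger relations by an explicit bookkeeping over the finitely many vertices and edges, the matching being possible at the level of $H_0(\H)$ because $[p_{i(e)}]=\phi([1_{C_e\mu}])=\phi(\sum_{t(e)=i(f)}[1_{C_{ef\mu}}])$ and pure infiniteness upgrades equality of classes to the existence of the required compact open $\H$-sets. (c) Invoke universality of $\mathcal{O}_A\cong C^*_r(\G_A)$ to get $\pi$, check (1)–(3) directly from the construction, and deduce the last sentence: since each $\pi(1_U)=1_V$ for a compact open $\H$-set $V$, $\pi$ maps a normalizer of $C(X_A)$ to a normalizer of $C(\H^{(0)})$, inducing the claimed group embedding $[[\G_A]]\hookrightarrow[[\H]]$ (injectivity of this map follows since $\pi$ is injective —$\mathcal{O}_A$ is simple— and $\sigma$ of the Proposition recovers $\alpha$ from $1_U$).

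The main obstacle I expect is step (b): arranging the partial isometries $S_e$ simultaneously so that \emph{all} the Cuntz--Krieger relations hold on the nose and every $S_e$ is literally of the form $1_V$ for a genuine compact open $\H$-set, rather than just an abstract partial isometry in $C^*_r(\H)$. The subtlety is that pure infiniteness gives proper infiniteness of clopen sets one at a time, but here one needs a coherent, compatible family of subdivisions across the whole graph, respecting the prescribed $H_0$-classes. I would handle this by induction/bookkeeping over a spanning structure of the graph, first fixing the $p_v$, then for each initial vertex $\xi$ partitioning $p_\xi$ into $|\{e:i(e)=\xi\}|$ properly infinite clopen pieces of the correct classes (possible since $H_0(\H)=H_0(\H)^+$ and clopen sets absorb), and finally within each piece using proper infiniteness to produce a compact open $\H$-set implementing the equivalence to $p_{t(e)}$; the compatibility of all these choices is then automatic because at each stage only the $H_0$-class matters and $\phi$ is a homomorphism. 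A cleaner route, if one prefers, is to reduce to the $1\times1$ case $\G_{[n]}$ first—embedding $C^*_r(\G_A)$ into some $\mathcal{O}_n\cong C^*_r(\G_{[n]})$ compatibly with the diagonals, which is classical for Cuntz--Krieger algebras—and then compose with an embedding $\mathcal{O}_n\hookrightarrow C^*_r(\H)$; but the direct construction above is more transparent for tracking condition (3).
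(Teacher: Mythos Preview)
Your proposal is correct and follows exactly the approach the paper indicates: the paper's only remark on the proof is that it ``uses the fact that the Cuntz--Krieger algebra $C^*_r(\G_A)$ is characterized as the universal $C^*$-algebra generated by partial isometries subject to the Cuntz--Krieger relations,'' and your steps (a)--(c) are precisely a fleshing out of that idea, with the pure infiniteness of $\H$ used to realize the required projections and partial isometries as characteristic functions of compact open $\H$-sets in the prescribed $H_0$-classes. Your identification of step (b) as the point requiring care (coherently tiling each $p_\xi$ by ranges of compact open $\H$-sets with sources $p_{t(e)}$) is accurate, and the bookkeeping you outline---partition, then match classes, then upgrade via proper infiniteness to actual bisections---is the right way to handle it.
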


The proof of this theorem uses the fact that 
the Cuntz-Krieger algebra $C^*_r(\G_A)$ is characterized 
as the universal $C^*$-algebra generated by partial isometries 
subject to the Cuntz-Krieger relations (\cite{CK80Invent}). 

As for finiteness condition, the following is known. 

\begin{theorem}[{\cite[Section 6]{M15crelle}}]\label{SFT/finite}
Let $\G_A$ be an SFT groupoid. 
\begin{enumerate}
\item $[[\G_A]]$ is of type F$_\infty$. 
(In particular, it is finitely presented.) 
\item $[[\G_A]]_0$ and $D([[\G_A]])$ are finitely generated. 
\end{enumerate}
\end{theorem}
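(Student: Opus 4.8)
The plan is to establish (1) via Brown's finiteness criterion applied to a Stein--Farley-type complex attached to the graph $(V,E)$, and then to derive (2) from (1) together with the structure results for $[[\G_A]]$ recalled in Theorems~\ref{pi/thm} and~\ref{SFT/thm}.

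For (1) I would first describe $[[\G_A]]$ as the Higman--Thompson-type group of the ``Cantor algebra'' of the directed graph $(V,E)$. By Example~\ref{SFT/tfg}, an element of $[[\G_A]]$ is represented by a triple $(F_1,\pi,F_2)$ in which each $F_i$ is a finite family of paths whose cylinder sets partition $X_A$ (a complete prefix code), $\pi\colon F_2\to F_1$ is a terminal-vertex-preserving bijection, and two such triples are identified when they admit a common simultaneous refinement (an ``expansion''). On the directed poset of expansions one builds, exactly as Brown and Stein did for $V_{n,1}$ and as in the later work on generalized Thompson groups, a contractible simplicial complex $X$ on which $[[\G_A]]$ acts; directedness of the poset yields the contractibility by the standard argument. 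The number of paths in a code is a Morse function $h\colon X\to\N$ whose sublevel sets $X_{\leq n}$ form an exhaustive filtration of $X$ by $[[\G_A]]$-cocompact subcomplexes. The stabilizer of a cell permutes the cylinders of a fixed code among those sharing a terminal vertex, hence is a finite product $\prod_{v\in V}\mathfrak{S}_{m_v}$, which is of type $\mathrm{F}_\infty$.

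By Bestvina--Brady discrete Morse theory, the connectivity of the pairs $(X_{\leq n+1},X_{\leq n})$ is governed by the descending links of vertices of height $n+1$; these are assembled from ``matching-type'' complexes recording the ways an expansion can be undone, one factor per path of the code. \emph{The main obstacle} is to show that the connectivity of these descending links tends to infinity with $n$: one must identify the link combinatorially and feed the arrow-combinatorics of $(V,E)$ into a nerve/Morse argument. Irreducibility of $A$ (so that, after passing to a sufficiently high power, every prefix code can be refined in arbitrarily many essentially independent ways) together with the hypothesis that $A$ is not a permutation matrix should force the required high connectivity. Brown's criterion then gives that $[[\G_A]]$ is of type $\mathrm{F}_\infty$, in particular finitely presented; for $A=[n]$ this specializes to Brown's theorem for $V_{n,1}$.

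For (2), by Theorem~\ref{SFT/thm}(2) the index map $I\colon[[\G_A]]\to H_1(\G_A)$ is surjective, and $H_1(\G_A)\cong\Ker(\id-A^t)$ is finitely generated free abelian, so $[[\G_A]]/[[\G_A]]_0\cong H_1(\G_A)$; by Theorem~\ref{SFT/thm}(3), $[[\G_A]]_0/D([[\G_A]])\cong H_0(\G_A)\otimes\Z_2$ is finite, since $H_0(\G_A)\cong\Coker(\id-A^t)$ is finitely generated. Hence $D([[\G_A]])$ has finite index in $[[\G_A]]_0$, and it suffices to prove that $[[\G_A]]_0$ is finitely generated (then $D([[\G_A]])$ is finitely generated as a finite-index subgroup). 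As in the $\Z$-action case (\cite{GPS99Israel},\cite{M06IJM}), I would show that $[[\G_A]]_0$ is generated by elements of finite order --- the transpositions $\tau_U$ with $r(U)\cap s(U)=\emptyset$ and the elementary cyclic permutations of cylinders with a common terminal vertex, all of which lie in $\Ker I$ --- and then use minimality and pure infiniteness of $\G_A$ to conjugate the support of any such element into a fixed basic cylinder and, after decomposing into pieces of the form $U_{\mu,\nu}$, reduce to the finitely many torsion elements supported on the cylinders $C_e$, $e\in E$. Thus (2) follows from (1) and the two exact sequences above.
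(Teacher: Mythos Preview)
Your proposal is correct and, for part (1), takes essentially the same approach as the paper: the survey states only that the proof ``uses Brown's criterion'' in the sense of \cite{Br87JPAA}, and your Stein--Farley complex with its height filtration, finite (symmetric-group) stabilizers, and the descending-link connectivity analysis is precisely how that criterion is implemented in \cite{M15crelle}. You have also correctly located the genuine difficulty --- the high connectivity of the descending links --- which is where the irreducibility hypothesis on $A$ does real work.

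For part (2) the survey gives no argument beyond the citation, so there is nothing in the text to compare against, but your deduction is sound: $H_0(\G_A)\otimes\Z_2$ is finite because $H_0(\G_A)$ is a finitely generated abelian group, so $D([[\G_A]])$ has finite index in $[[\G_A]]_0$ and inherits finite generation from it. Your plan to show $[[\G_A]]_0$ is generated by finitely many transpositions, by first observing it is generated by transpositions (this is \cite[Lemma~4.7]{M15crelle} for purely infinite groupoids) and then conjugating supports into a fixed cylinder, is exactly the kind of argument carried out in \cite[Section~6]{M15crelle}. One minor remark: your closing sentence ``(2) follows from (1)'' overstates the dependence --- your argument for finite generation of $[[\G_A]]_0$ does not actually use (1), only pure infiniteness and minimality; the role of (1) is confined to the ``finitely presented'' parenthetical.
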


For the Higman-Thompson groups $V_{n,r}$ (and also $F_{n,r}$, $T_{n,r}$), 
K. S. Brown \cite{Br87JPAA} proved that they are of type F$_\infty$. 
The theorem above is a generalization of this result, 
and its proof uses Brown's criterion (\cite[Corollary 3.3]{Br87JPAA}). 

As mentioned in Section 7, 
$[[\G_A]]$ is not amenable, because $\G_A$ is purely infinite. 
But, $[[\G_A]]$ has a weaker version of the amenability. 

\begin{theorem}[{\cite[Theorem 6.7]{M15crelle}}]\label{SFT/Haagerup}
Let $\G_A$ be an SFT groupoid. 
The topological full group $[[\G_A]]$ has the Haagerup property. 
\end{theorem}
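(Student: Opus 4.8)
The plan is to realize $[[\G_A]]$ as a group acting metrically properly on a CAT(0) cube complex $\mathcal{C}_A$; since the hyperplanes of a CAT(0) cube complex make it a space with (measured) walls, such an action yields a proper conditionally negative definite function, and hence the Haagerup property (Niblo--Reeves for finite-dimensional complexes; the space-with-walls framework, as used by Farley for Thompson's group $V$, in general). The complex $\mathcal{C}_A$ will be of Stein--Farley type, closely related to the complexes underlying Theorem~\ref{SFT/finite}, and the real work is to check that it is genuinely CAT(0) cubical and that the action is metrically proper rather than merely cocompact.

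First I would fix the combinatorial skeleton. By Example~\ref{SFT/tfg}, every $\alpha\in[[\G_A]]$ is given by two clopen cylinder partitions $\{C_{\mu_i}\}$, $\{C_{\nu_i}\}$ of $X_A$ with $t(\mu_i)=t(\nu_i)$, with $\alpha$ acting on $C_{\nu_i}$ by the prefix replacement $\nu_i\mapsto\mu_i$. Let $\mathcal{P}$ be the directed poset of finite cylinder partitions of $X_A$ ordered by refinement, an \emph{elementary expansion} replacing a block $C_\mu$ by $\{C_{\mu e}\mid e\in E,\ i(e)=t(\mu)\}$; irreducibility of $A$ and the assumption that $A$ is not a permutation matrix guarantee that every block is, after finitely many expansions, strictly refinable, so $\mathcal{P}$ is directed. (Out-degree-one vertices, where $\mu\mapsto\mu e$ does not change the underlying clopen set, are a minor nuisance, removable by state splitting or by working with paths rather than sets.)

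Then I would build $\mathcal{C}_A$: its vertices are equivalence classes of pairs $(\alpha,P)$ with $P\in\mathcal{P}$ fine enough that $\alpha$ is rigid on $P$, modulo identifying $(\alpha,P)\sim(\alpha',P)$ when $\alpha^{-1}\alpha'$ permutes the blocks of $P$ by length-preserving bijections acting trivially on the rooted forest below $P$; an $n$-cube records a choice of $n$ mutually non-nested blocks to expand independently, and $[[\G_A]]$ acts by $\beta\cdot(\alpha,P)=(\beta\alpha,P)$. The items to verify are: $\mathcal{C}_A$ is contractible (from directedness of $\mathcal{P}$, as in Theorem~\ref{SFT/finite}) and its vertex links are flag --- here the graph $(V,E)$ enters, as one needs commuting expansions to span cubes; the action is cubical without inversions, which is built in; and, crucially, the action is metrically proper, i.e.\ for each vertex only finitely many group elements move it a bounded combinatorial distance. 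Granting these, $[[\G_A]]$ is a-T-menable.

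I expect the main obstacle to be metric properness together with the flag condition: the equivalence relation on the pairs $(\alpha,P)$ must be fine enough to annihilate the ``local'' part of the vertex stabilizers (otherwise the action is not proper, since the stabilizer of a partition in $[[\G_A]]$ is itself a large topological full group), yet coarse enough that the links remain flag and $\mathcal{C}_A$ stays CAT(0). In Farley's analysis of $V$ this calculus of forest diagrams is the technical core; the SFT version demands the same argument with the extra bookkeeping of the terminal vertices $t(\mu)$, using irreducibility of $A$ to ensure enough room for the required independent expansions. An essentially equivalent alternative is to write down directly a proper length function on $[[\G_A]]$ by counting the hyperplanes of $\mathcal{C}_A$ separating $1$ from $\alpha$, and to verify conditional negative definiteness from the median structure.
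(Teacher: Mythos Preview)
Your plan is essentially the original proof from \cite{M15crelle}: adapt Farley's CAT(0) cube complex for $V$ to the SFT setting, check the link/flag conditions with the extra bookkeeping of terminal vertices $t(\mu)$, and verify metric properness. The obstacles you single out (the equivalence relation on $(\alpha,P)$ calibrated so that stabilizers become finite while links stay flag, and the out-degree-one nuisance) are exactly the ones that arise, and they are surmountable along the lines you sketch.

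The paper, however, also records a much shorter route that bypasses all of this. Since $H_0(\G_{[2]})=0$, the hypothesis of Theorem~\ref{embedding} is vacuous for $\H=\G_{[2]}$, so every SFT groupoid $\G_A$ admits an embedding of $[[\G_A]]$ into $[[\G_{[2]}]]\cong V_{2,1}$. Farley \cite{F03IMRN} already proved that $V_{2,1}$ has the Haagerup property, and the Haagerup property passes to subgroups; hence $[[\G_A]]$ is a-T-menable with no new cube complex needed. What your approach buys is an intrinsic complex $\mathcal{C}_A$ tailored to $\G_A$ (useful for finiteness properties as in Theorem~\ref{SFT/finite}, and giving an explicit proper conditionally negative definite length on $[[\G_A]]$), whereas the embedding argument trades that geometric information for a one-line reduction to Farley's theorem.
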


D. S. Farley \cite{F03IMRN} proved that 
the Higman-Thompson group $V_{2,1}$ has the Haagerup property. 
In \cite{M15crelle}, the Haagerup property of $[[\G_A]]$ was shown 
by modifying the argument of \cite{F03IMRN}. 
But, making use of Theorem \ref{embedding}, 
we can embed $[[\G_A]]$ to $[[\G_{[2]}]]\cong V_{2,1}$, 
and so the Haagerup property of $[[\G_A]]$ follows immediately. 

Soficity and exactness of the Higman-Thompson groups (or $[[\G_A]]$) 
is an important open problem.

\subsection{Products of shifts of finite type}

In this subsection, 
we would like to review the results about product groupoids 
$\G=\G_{A_1}\times\G_{A_2}\times\dots\times\G_{A_n}$ 
of SFT groupoids. 
Evidently, these groupoids are purely infinite and minimal. 
(In general, if $\G$ is purely infinite, 
then $\G\times\H$ is purely infinite, too.) 
The groupoid $C^*$-algebra $C^*_r(\G)$ is isomorphic to 
the tensor product 
$C^*_r(\G_{A_1})\otimes C^*_r(\G_{A_2})\otimes\dots\otimes C^*_r(\G_{A_n})$. 

In what follows, for an irreducible one-sided SFT $(X_A,\sigma_A)$, 
the equivalence class of $1_{X_A}$ in $H_0(\G_A)$ is 
denoted by $u_A$. 

\begin{theorem}[{\cite[Theorem 5.12]{M15}}]\label{prodSFT1}
Let $\G=\G_{A_1}\times\G_{A_2}\times\dots\times\G_{A_m}$ and 
$\H=\G_{B_1}\times\G_{B_2}\times\dots\times\G_{B_n}$ be 
product groupoids of SFT groupoids. 
Then $\G\cong\H$ if and only if the following are satisfied. 
\begin{enumerate}
\item $m=n$. 
\item There exist a permutation $\sigma$ of $\{1,2,\dots,n\}$ and 
isomorphisms $\phi_i:H_0(\G_{A_i})\to H_0(\G_{B_{\sigma(i)}})$ 
such that $\det(\id-A_i)=\det(\id-B_{\sigma(i)})$ and 
\[
(\phi_1\otimes\phi_2\otimes\dots\otimes\phi_n)
(u_{A_1}\otimes u_{A_2}\otimes\dots\otimes u_{A_n})
=u_{B_{\sigma(1)}}\otimes u_{B_{\sigma(2)}}\otimes\dots
\otimes u_{B_{\sigma(n)}}. 
\]
In particular, 
$\G_{A_i}$ and $\G_{B_{\sigma(i)}}$ are Morita equivalent. 
\end{enumerate}
\end{theorem}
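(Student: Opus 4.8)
The plan is to reduce the classification of product SFT groupoids to the classification of the individual factors, for which we already have the complete invariant from the orbit-equivalence theorem for one-sided SFTs (the theorem stated just before Theorem~\ref{SFT/thm}: $\G_A\cong\G_B$ iff there is an isomorphism $H_0(\G_A)\to H_0(\G_B)$ preserving the class of $1_{X_A}$ together with the sign condition $\sgn(\det(\id-A))=\sgn(\det(\id-B))$). The ``if'' direction is the easy half: given a permutation $\sigma$ and isomorphisms $\phi_i$ as in condition (2), each $\phi_i$ carries $u_{A_i}$ to $u_{B_{\sigma(i)}}$ up to the product constraint, and $\det(\id-A_i)=\det(\id-B_{\sigma(i)})$ forces the sign condition on each factor, so by the orbit-equivalence theorem each $\G_{A_i}$ is isomorphic to $\G_{B_{\sigma(i)}}$; taking the product of these isomorphisms gives $\G\cong\H$. (One must be slightly careful: the individual $\phi_i$ need only match $u$'s after tensoring, but since $H_0(\G_A)$ is generated as a group by classes of clopen sets and each such class is a positive multiple issue — actually $u_{A_i}$ being a fixed element, one adjusts each $\phi_i$ by composing with an automorphism so that $\phi_i(u_{A_i})=u_{B_{\sigma(i)}}$ individually; the tensor constraint guarantees such adjustments can be made consistently.)

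For the ``only if'' direction I would proceed in three steps. First, extract numerical and $K$-theoretic invariants: if $\G\cong\H$ then $C^*_r(\G)\cong C^*_r(\H)$, i.e.\ $\bigotimes_i\mathcal{O}_{A_i}\cong\bigotimes_j\mathcal{O}_{B_j}$, and by Theorem~\ref{Renault} this isomorphism carries the canonical Cartan subalgebra to the canonical Cartan subalgebra. Second, use the Künneth theorem (Theorem~\ref{Kunneth}) together with Example~\ref{ExofH}(3) to compute $H_*(\G)$ from the $H_*(\G_{A_i})$: since each $H_*(\G_{A_i})$ is concentrated in degrees $0,1$ with $H_0(\G_{A_i})=\Coker(\id-A_i^t)$ and $H_1(\G_{A_i})=\Ker(\id-A_i^t)$, the homology of the product is the graded tensor product (plus Tor terms). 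The isomorphism $H_*(\G)\cong H_*(\H)$ as graded groups, combined with the fact that the total rank of $H_*(\G)$ grows like $2^m$ times a factor-dependent quantity, should pin down $m=n$; more precisely, $H_0(\G)$ is the tensor product $\bigotimes H_0(\G_{A_i})$ together with its distinguished element $u_{A_1}\otimes\dots\otimes u_{A_m}$, and one recovers the number of tensor factors and the individual factors by a unique-factorization argument for the tensor product of these specific abelian groups with their marked elements. Third, reconstruct the permutation $\sigma$ and the $\phi_i$: having identified the factors abstractly, the sign conditions $\det(\id-A_i)=\det(\id-B_{\sigma(i)})$ must be extracted, and here I would pass to the two-sided picture and the Bowen--Franks / flow-equivalence invariants exactly as in the proof of the orbit-equivalence theorem, noting that the sign of the determinant is a flow-equivalence invariant of each factor that survives the tensor decomposition because it appears in the structure of $C^*_r(\G_{A_i})$ (e.g.\ via the position of $[1]$ in $K_0$, or the behaviour under the canonical trace-like functionals when one normalizes). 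Finally, the Morita equivalence statement follows because an isomorphism $H_0(\G_{A_i})\to H_0(\G_{B_{\sigma(i)}})$ not necessarily preserving the unit still gives Morita equivalence of the Cuntz--Krieger algebras, hence of the groupoids up to reduction.

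The main obstacle I anticipate is the second step: showing that the marked graded abelian group $\bigl(H_*(\G),\,[1_{\G^{(0)}}]\bigr)$ determines the unordered multiset of marked factors $\bigl(H_*(\G_{A_i}),u_{A_i}\bigr)$, i.e.\ a \emph{unique factorization} statement for tensor products of these finitely-generated abelian groups. In general tensor products of abelian groups do not factor uniquely, so one must exploit the specific structure coming from SFTs — each factor is $\Coker(\id-A_i^t)$ in degree $0$ with a free part $\Ker(\id-A_i^t)$ in degree $1$, and the marked element $u_{A_i}$ is an order unit in the sense that $H_0(\G_{A_i})^+=H_0(\G_{A_i})$ and $u_{A_i}$ generates ``enough'' of the group under the semigroup operations visible in the topological full group. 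I expect the cleanest route is not purely algebraic but goes back through $C^*$-algebras: the tensor factors $\mathcal{O}_{A_i}$ are Kirchberg algebras in the UCT class, and by the classification theorem a unital tensor product of Kirchberg algebras does not determine the factors in general either — so one genuinely needs the \emph{Cartan pair} structure, i.e.\ the groupoid itself, and the argument must track how the canonical AF subgroupoids and the $\Z$-valued cocycles on each factor sit inside the product. Making this rigidity precise — probably via an eigenvalue/Perron-number argument on the matrices $A_i$, whose logarithms of Perron eigenvalues are $\Q$-linearly independent in generic position and additive under tensoring — is where the real work lies; for the cases already cited (Theorem~\ref{prodSFT/AH}) this is handled in \cite{M15}, and I would adapt that bookkeeping.
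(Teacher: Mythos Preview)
The survey paper does not actually prove Theorem~\ref{prodSFT1}; it is stated with a citation to \cite[Theorem 5.12]{M15} and no argument is given here. So there is no ``paper's proof'' to compare against. That said, your proposal has a genuine gap that is worth naming.

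Your ``if'' direction is incorrect as written. You try to deduce $\G_{A_i}\cong\G_{B_{\sigma(i)}}$ for each $i$ from the hypotheses, by adjusting each $\phi_i$ so that $\phi_i(u_{A_i})=u_{B_{\sigma(i)}}$ individually. But the hypotheses do \emph{not} guarantee this, and the theorem itself signals the point: it concludes only that $\G_{A_i}$ and $\G_{B_{\sigma(i)}}$ are \emph{Morita equivalent}, not isomorphic. Concretely, take two factors with $H_0\cong\Z_6$, unit classes $\bar 2$ and $\bar 3$; in $\Z_6\otimes\Z_6\cong\Z_6$ one has $\bar 2\otimes\bar 3=\bar 0=\bar 0\otimes\bar 0$, so the tensor constraint can match these to two factors with unit class $\bar 0$, yet no automorphism of $\Z_6$ sends $\bar 2$ to $\bar 0$. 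Hence your claimed adjustment ``$\phi_i(u_{A_i})=u_{B_{\sigma(i)}}$'' is impossible in general, and the factor groupoids need not be isomorphic. The actual construction of $\G\cong\H$ must therefore use something beyond a product of factor isomorphisms --- one has to exploit pure infiniteness of the product to move the unit class around (e.g.\ via reductions to clopen subsets and the classification of Morita-equivalent SFT groupoids), and this is where the real content of the ``if'' direction lies.

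Your ``only if'' direction is not yet a proof but a list of heuristics. You correctly identify the crux --- recovering the unordered multiset of marked factors $(H_0(\G_{A_i}),u_{A_i},\det(\id-A_i))$ from the product groupoid --- and you correctly note that neither the bare graded abelian group $H_*(\G)$ nor the Kirchberg classification of $\bigotimes_i\mathcal{O}_{A_i}$ suffices. But the passage from ``one genuinely needs the Cartan pair structure'' to an actual unique-factorization argument is the whole proof, and neither the Perron-eigenvalue remark (which fails whenever two factors share a Perron eigenvalue) nor the vague appeal to ``canonical AF subgroupoids and $\Z$-valued cocycles'' supplies it. In \cite{M15} this step is carried out by a concrete analysis; as it stands, your sketch does not contain the key idea.
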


\begin{theorem}[{\cite{M15}}]\label{prodSFT2}
Let $\G=\G_{A_1}\times\G_{A_2}\times\dots\times\G_{A_n}$ be 
a product groupoid of SFT groupoids. 
\begin{enumerate}
\item $H_k(\G)$ is isomorphic to 
\begin{align*}
&\left(\Z^{\binom{n-1}{k}}
\otimes H_0(\G_{A_1})\otimes H_0(\G_{A_2})\otimes
\dots\otimes H_0(\G_{A_n})\right)\\
&\oplus\left(\Z^{\binom{n-1}{k-1}}
\otimes H_1(\G_{A_1})\otimes H_1(\G_{A_2})\otimes
\dots\otimes H_1(\G_{A_n})\right), 
\end{align*}
where $\binom{n}{k}$ denote the binomial coefficients and 
they are understood as zero unless $0\leq k\leq n$. 
The equivalence class of the constant function $1_{\G^{(0)}}$ 
in $H_0(\G)=H_0(\G_{A_1})\otimes\dots\otimes H_0(\G_{A_n})$ 
is $u_{A_1}\otimes\dots\otimes u_{A_n}$. 
\item $\G$ satisfies the HK conjecture. 
\item $D([[\G]])$ is simple. 
\item The index map $I:[[\G]]\to H_1(\G)$ is surjective. 
\item $\G$ satisfies the AH conjecture. 
\end{enumerate}
\end{theorem}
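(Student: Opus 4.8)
Since each $\G_{A_i}$ is an SFT groupoid it is purely infinite and minimal, and, as observed above, a product of purely infinite groupoids is again purely infinite; moreover the $\G$-orbit of a point of $\G^{(0)}$ is the product of the orbits in the factors, so $\G$ is also minimal. Hence assertions (3) and (4) are simply Theorem \ref{pi/thm} (1) and (2) applied to $\G$, while (5) is exactly Theorem \ref{prodSFT/AH}. The real content is therefore (1) and (2), and the plan in both cases is to iterate a K\"unneth-type theorem over the $n$ factors.

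For (1), I would proceed by induction on $n$, the case $n=1$ being Example \ref{ExofH} (3) (that is, $H_0(\G_A)\cong\Coker(\id-A^t)$, $H_1(\G_A)\cong\Ker(\id-A^t)$, and $H_m(\G_A)=0$ for $m\geq2$). For the inductive step write $\G=\G'\times\G_{A_n}$ with $\G'=\G_{A_1}\times\dots\times\G_{A_{n-1}}$ and apply the K\"unneth theorem (Theorem \ref{Kunneth}). Since $H_m(\G_{A_n})=0$ for $m\geq2$, and $H_1(\G_{A_n})=\Ker(\id-A_n^t)$ is torsion-free (indeed a direct summand of $\Z^{V_n}$, so $\Tor(-,H_1(\G_{A_n}))=0$), the split K\"unneth sequence presents $H_k(\G)$ as
\[
\big(H_k(\G')\otimes H_0(\G_{A_n})\big)\ \oplus\ \big(H_{k-1}(\G')\otimes H_1(\G_{A_n})\big)\ \oplus\ \Tor\big(H_{k-1}(\G'),H_0(\G_{A_n})\big).
\]
Substituting the inductive formula for $H_\bullet(\G')$, the heart of the matter is a purely algebraic collapse: using that every $H_1(\G_{A_i})$ is free, that $\operatorname{rank}H_0(\G_{A_i})=\operatorname{rank}H_1(\G_{A_i})$ (both equal the nullity of $\id-A_i^t$), and that $\Tor(G,H)\cong G\otimes H$ for finite abelian groups $G,H$, one checks that the mixed term together with the Tor term reorganizes so that only the pure tensors $\bigotimes_iH_0(\G_{A_i})$ and $\bigotimes_iH_1(\G_{A_i})$ survive, the multiplicities $\binom{n-1}{k}$ and $\binom{n-1}{k-1}$ coming out of the recursion $\binom{n-1}{k}=\binom{n-2}{k}+\binom{n-2}{k-1}$ and the shift by one between the two families reflecting the single $H_1$-summand of $\G_{A_n}$. (Equivalently, one may replace each chain complex $C_c(\G_{A_i}^{(\bullet)})$ by the two-term free complex $[\Z^{V_i}\xrightarrow{\id-A_i^t}\Z^{V_i}]$ and compute directly the homology of the resulting Koszul-type total tensor complex; the same algebraic facts are what force the answer into the stated form.) Finally, the identification of $[1_{\G^{(0)}}]$ is routine, since the degree-zero part of the K\"unneth isomorphism is canonical and carries $1_{\G^{(0)}}=1_{X_{A_1}}\otimes\dots\otimes1_{X_{A_n}}$ to $u_{A_1}\otimes\dots\otimes u_{A_n}$. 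The step I expect to require the most care is exactly this algebraic collapse: one must verify that the K\"unneth Tor-terms genuinely rearrange into the two pure families with the stated binomial multiplicities, rather than contributing mixed tensors or extra torsion, and this is precisely where the special arithmetic of SFT groupoids enters.

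For (2), I would run the parallel induction on the operator-algebra side. One has $C^*_r(\G)\cong\mathcal{O}_{A_1}\otimes\dots\otimes\mathcal{O}_{A_n}$, and Cuntz-Krieger algebras are nuclear and lie in the bootstrap class, so the K\"unneth theorem for $C^*$-algebras provides, for each pair, a split short exact sequence computing the $K$-theory of a tensor product from the $K$-theories of the factors, of exactly the same shape as Theorem \ref{Kunneth} (tensor products in matching degrees modulo $2$, Tor terms shifted by one). The base case is the HK conjecture for a single SFT groupoid, namely $K_0(\mathcal{O}_{A_i})\cong H_0(\G_{A_i})$ and $K_1(\mathcal{O}_{A_i})\cong H_1(\G_{A_i})$, which follows from Example \ref{ExofH} (3) and the known $K$-theory of Cuntz-Krieger algebras. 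Collapsing the $\Z$-grading of Theorem \ref{Kunneth} to a $\Z/2$-grading turns it into precisely the same $\Z/2$-graded K\"unneth formula that governs the $C^*$-side, and since both sequences split, a straightforward induction gives $\bigoplus_{i\geq0}H_{2i}(\G)\cong K_0(C^*_r(\G))$ and $\bigoplus_{i\geq0}H_{2i+1}(\G)\cong K_1(C^*_r(\G))$; alternatively, one can simply feed the explicit description from (1) into this comparison and read the HK conjecture off directly.
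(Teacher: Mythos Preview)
Your plan is correct and matches the paper's own proof essentially line for line: the paper disposes of (3) and (4) by Theorem~\ref{pi/thm}, of (5) by Theorem~\ref{prodSFT/AH} (equivalently, iterated Proposition~\ref{TR}), and derives (1) and (2) from the K\"unneth theorem (Theorem~\ref{Kunneth}) together with the operator-algebraic K\"unneth formula. The paper is in fact far terser than you are---it simply asserts that (1) ``is obtained from the K\"unneth theorem'' without spelling out the algebraic collapse---so your more detailed sketch of the inductive step, and in particular your identification of the key algebraic point (that $(Q\otimes H_0)\oplus(P\otimes H_1)\oplus\Tor(P,H_0)\cong(P\otimes H_0)\oplus(Q\otimes H_1)$ because $\operatorname{rank}P=\operatorname{rank}Q$, $H_1$ is free, and $\Tor$ of finite groups agrees with their tensor product), is a genuine elaboration rather than a deviation.
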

\begin{proof}
(1) is obtained from the K\"unneth theorem (Theorem \ref{Kunneth}). 

(2) is an immediate consequence of Theorem \ref{Kunneth} and 
the K\"unneth theorem for $K$-groups of $C^*$-algebras. 

(3) and (4) readily follow from Theorem \ref{pi/thm} (1) and (2). 

(5) is shown by an inductive application of Proposition \ref{TR}. 
See also Example \ref{SFT/AH} and Theorem \ref{prodSFT/AH}. 
\end{proof}

The topological full group of 
$\G=\G_{A_1}\times\G_{A_2}\times\dots\times\G_{A_n}$ is 
a generalization of the higher dimensional Thompson groups $nV_{k,r}$. 
M. G. Brin introduced 
the notion of higher dimensional Thompson groups $nV_{k,r}$ 
in \cite[Section 4.2]{Brin04GeomD}. 
These groups can be considered as an $n$-dimensional analogue of 
the Higman-Thompson group $V_{k,r}=1V_{k,r}$. 
Brin proved that $V_{k,r}$ and $2V_{2,1}$ are not isomorphic, 
and that $2V_{2,1}$ is finitely presented. 
He also proved that $nV_{2,1}$ is simple for all $n\in\N$ 
in \cite{Brin10PublMat}. 
W. Dicks and C. Mart\'inez-P\'erez \cite{DMP14Israel} 
proved that $nV_{k,r}\cong n'V_{k',r'}$ if and only if 
$n=n'$, $k=k'$ and $\gcd(k{-}1,r)=\gcd(k'{-}1,r')$. 

Define an $r\times r$ matrix $A_{k,r}$ by 
\[
A_{k,r}=\begin{bmatrix}0&0&\ldots&0&k\\1&0&\ldots&0&0\\0&1&\ldots&0&0\\
\vdots&\vdots&\ddots&\vdots&\vdots\\0&0&\ldots&1&0\end{bmatrix}. 
\]
The topological full group $[[\G_{A_{k,r}}]]$ of the SFT groupoid $A_{k,r}$ 
is naturally isomorphic to the Higman-Thompson group $V_{k,r}$ 
(see \cite[Section 6.7.1]{M15crelle}). 
By Example \ref{ExofH} (3), 
$H_0(\G_{A_{k,r}})\cong\Z_{k{-}1}$, $H_n(\G_{A_{k,r}})=0$ for $n\geq1$ and 
$u_{A_{k,r}}$ corresponds to $\bar{r}\in\Z_{k{-}1}$. 
It is not so hard to see that 
the higher dimensional Thompson group $nV_{k,r}$ is isomorphic to 
the topological full group $[[\G]]$ of 
the product groupoid 
\[
\G=\G_{A_{k,r}}
\times\overbrace{\G_{A_{k,1}}\times\dots\times\G_{A_{k,1}}}^{n-1}. 
\]
It follows from Theorem \ref{prodSFT2} (3) that 
the commutator subgroup $D([[\G]])$ is simple. 
By Theorem \ref{prodSFT2} (1), 
we get $H_l(\G)\cong(\Z_{k{-}1})^{\binom{n{-}1}{l}}$. 
Therefore, Theorem \ref{prodSFT2} (5) tells us that 
$[[\G]]$ is simple if and only if $k=2$. 
This reproves the result of Brin \cite{Brin10PublMat}. 
Also, by applying Theorem \ref{prodSFT1}, 
we get a new proof of the classification theorem 
by Dicks and Mart\'inez-P\'erez \cite{DMP14Israel}. 

As for the cohomological finiteness condition, 
C. Mart\'inez-P\'erez, F. Matucci and B. E. A. Nucinkis 
\cite{MPMN13} 
proved that $nV_{k,1}$ (and many other relatives) are of type F$_\infty$. 
We do not know if the same holds for topological full groups 
of product groupoids of SFT groupoids 
$\G=\G_{A_1}\times\G_{A_2}\times\dots\times\G_{A_n}$. 

In \cite[Section 5.5]{M15}, 
we completely determined the abelianization of the topological full group of 
$\G=\G_{A_1}\times\G_{A_2}\times\dots\times\G_{A_n}$, 
but here we do not state the result precisely 
because it would be quite complicated. 
Instead, 
let us consider a special case, namely products of one-sided full shifts. 
Let $k:\{1,2,\dots,n\}\to\N\setminus\{1\}$ be a map. 
Set 
\[
\G=\G_{[k(1)]}\times\G_{[k(2)]}\times\dots\times\dots\G_{[k(n)]}. 
\]
Let $g=\gcd\{k(i){-}1\mid i=1,2,\dots,n\}$. 
Then $H_0(\G)\cong\Z_g$ and $H_1(\G)\cong(\Z_g)^{n-1}$. 
By Theorem \ref{prodSFT2} (5), 
\[
\begin{CD}
\Z_g\otimes\Z_2@>j>>[[\G]]_\ab@>I>>(\Z_g)^{n-1}@>>>0
\end{CD}
\]
is exact. 

\begin{theorem}[{\cite[Theorem 5.23]{M15}}]\label{prodSFT3}
Let $\G$ be as above. 
\begin{enumerate}
\item If $k(i)$ is even for some $i$, then $[[\G]]_\ab\cong(\Z_g)^{n-1}$. 
\item If $k(i)$ is odd for all $i$ and $\#\{i\mid k(i)\in4\Z{+}3\}\leq1$, 
then $[[\G]]_\ab\cong\Z_2\oplus(\Z_g)^{n-1}$. 
\item If $k(i)$ is odd for all $i$ and $\#\{i\mid k(i)\in4\Z{+}3\}=2$, 
then $[[\G]]_\ab\cong\Z_{2g}\oplus(\Z_g)^{n-2}$. 
\item If $k(i)$ is odd for all $i$ and $\#\{i\mid k(i)\in4\Z{+}3\}\geq3$, 
then $[[\G]]_\ab\cong(\Z_g)^{n-1}$. 
In particular, $\G$ does not have the strong AH property. 
\end{enumerate}
\end{theorem}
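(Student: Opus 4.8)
The plan is to reduce everything to the exact sequence
\[
\begin{CD}
\Z_g\otimes\Z_2@>j>>[[\G]]_\ab@>I>>(\Z_g)^{n-1}@>>>0
\end{CD}
\]
coming from Theorem \ref{prodSFT2} (5), and to determine in each case whether the map $j$ is injective and, if so, whether the extension splits. Since $\Z_g\otimes\Z_2\cong\Z_{\gcd(g,2)}$, the left-hand term is trivial when $g$ is even-free in the relevant sense; more precisely, one first checks the parity of $g$. If some $k(i)$ is even, then $k(i)-1$ is odd, hence $g$ is odd, hence $\Z_g\otimes\Z_2=0$, and the sequence forces $[[\G]]_\ab\cong(\Z_g)^{n-1}$; this is case (1) and it is essentially immediate. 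For the remaining cases all $k(i)$ are odd, so $g$ is even and $\Z_g\otimes\Z_2\cong\Z_2$, and the real work is to decide the image of the nonzero element of $\Z_2$ in $[[\G]]_\ab$.

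The key computational input is the behaviour of the transposition-type elements $\tau_U$ described after the AH conjecture, together with the embedding $[[\G_{[k(i)]}]]\hookrightarrow[[\G]]$ (induced by a unital homomorphism of the Cuntz algebras as in Theorem \ref{embedding}) and the known abelianizations of the Higman-Thompson groups: $V_{m,1}^{\ab}$ is trivial for $m$ even and $\Z_2$ for $m$ odd. The strategy I would follow mirrors the proof of Theorem \ref{SFT/thm} (3): pick a generator $\tau$ of the candidate $\Z_2\subset[[\G]]_\ab$ coming from $1_{s(U)}\otimes\bar1$ for a suitable compact open $\G$-set $U$ with $r(U)\cap s(U)=\emptyset$, and test its class by pushing it through homomorphisms $\phi\colon H_0(\G)\to H_0(\H)$ for auxiliary purely infinite groupoids $\H$ built from full shifts $\G_{[3]}$. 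The point is that $3\in4\Z+3$, and whether a product of copies of $\Z_2=V_{3,1}^{\ab}$ detects $\tau$ depends on how many factors with $k(i)\equiv 3\pmod 4$ are present — this is where the dichotomy $4\Z+1$ versus $4\Z+3$ enters. Concretely I expect a sign/parity invariant: a factor $\G_{[k(i)]}$ contributes a nontrivial obstruction to the vanishing of $j$ precisely when $k(i)\equiv3\pmod4$, and the obstructions from several such factors interact additively modulo $2$, which is why having one such factor (case (2)) keeps $j$ injective and split, having exactly two (case (3)) merges a $\Z_2$ with a $\Z_g$ into $\Z_{2g}$ via a nonsplit extension, and having three or more (case (4)) kills $j$ entirely so that $[[\G]]_\ab\cong(\Z_g)^{n-1}$ and the strong AH property fails.

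For cases (2) and (3) I would then need to pin down the extension class in $\mathrm{Ext}((\Z_g)^{n-1},\Z_2)$ (or a direct summand thereof): in (2) one exhibits a left inverse to $j$, most cleanly by composing with the abelianization map $[[\G]]\to[[\G_{[k(i)]}]]_\ab\cong\Z_2$ for the unique factor with $k(i)\in4\Z+3$ (using the embedding theorem to see this factor as a retract at the level of abelianizations), giving the splitting $[[\G]]_\ab\cong\Z_2\oplus(\Z_g)^{n-1}$; in (3) one shows no such left inverse exists by computing the order of a suitable lift of a generator of one $\Z_g$-summand and seeing it has order $2g$, which identifies the extension as $\Z_{2g}\oplus(\Z_g)^{n-2}$. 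The main obstacle, I expect, is exactly this extension-class bookkeeping in case (3): one must carefully track how the index map $I$ interacts with the chosen generators of $H_1(\G)\cong(\Z_g)^{n-1}$ (which by Theorem \ref{prodSFT2} (1) are themselves tensor products $H_1(\G_{A_i})\otimes\bigotimes_{j\neq i}H_0(\G_{A_j})$) and verify that precisely one of the $n-1$ generators picks up the $\Z_2$, the choice being dictated by the pair of indices in $4\Z+3$. The parity calculations underlying the "additive mod 2" claim are routine once set up, so the difficulty is organizational rather than conceptual — getting the identification of which invariant of $V_{k(i),1}^{\ab}$ is being detected, and showing the contributions genuinely cancel in pairs.
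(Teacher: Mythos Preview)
Your treatment of (1) is fine and matches the paper. The rest has real gaps.

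First, the ``additive mod $2$'' heuristic is inconsistent with the statement you are trying to prove: if contributions from factors with $k(i)\in4\Z+3$ cancelled in pairs, then three such factors would behave like one and give $\Z_2\oplus(\Z_g)^{n-1}$, not $(\Z_g)^{n-1}$. So the mechanism you propose cannot be the right one. Second, in case (2) there may be \emph{no} factor with $k(i)\in4\Z+3$ (the condition is $\leq1$, not $=1$), so ``project onto the unique such factor'' is unavailable; and even when such a factor exists, there is no group homomorphism $[[\G]]\to[[\G_{[k(i)]}]]$ to project along --- Theorem~\ref{embedding} only produces embeddings going the other way. The paper's argument for (2) and (3) embeds $[[\G]]$ into $[[\H]]$ for $\H$ a product of $n$ copies of $\G_{[5]}$ (note $5\in4\Z+1$, not $\G_{[3]}$), and then invokes an externally proved lemma \cite[Lemma~5.19]{M15} giving a homomorphism $\rho:[[\H]]_\ab\to\Z_2$ (respectively $\Z_4$ in case (3)) with $\rho\circ j\neq0$. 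That $\rho$ comes from an explicit presentation of a larger group and is not something you can read off from the AH sequence alone.

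Third, and most seriously, you have no concrete mechanism for (4). The paper's argument is a specific trick with baker's maps: for $n=3$ and $k(i)=3$ one defines $\beta_{12},\beta_{23}\in[[\G]]$ acting as the baker's map on the indicated pair of coordinates, and cites a computation \cite[Lemma~5.21(4)]{M15} showing $2[\beta_{ij}]=t$ in $[[\G]]_\ab$, where $t=j(\bar1\otimes\bar1)$. The point is that $\beta_{12}\beta_{23}$ is itself the baker's map on the $(1,3)$ coordinates, so $2[\beta_{12}\beta_{23}]=t$ as well; but in the abelianization $2[\beta_{12}\beta_{23}]=2[\beta_{12}]+2[\beta_{23}]=t+t=2t$, whence $t=0$. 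This identity among three baker's maps is what genuinely requires at least three factors in $4\Z+3$, and it is the missing idea in your proposal.
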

\begin{proof}
(1). 
Since $g$ is odd, $\Z_g\otimes\Z_2=0$. 
So $[[\G]]_\ab\cong(\Z_g)^{n-1}$. 

(2). 
Let us consider the case that $k(i)\in4\Z{+}1$ for all $i=1,2,\dots,n$. 
Let $\H$ be the direct product of $n$ copies of $\G_{[5]}$. 
By \cite[Lemma 5.19 (1)]{M15}, 
there exists a homomorphism $\rho:[[\H]]_\ab\to\Z_2$ 
such that $\rho\circ j$ is nonzero. 
(In \cite[Lemma 5.19 (1)]{M15}, 
$[[\H]]$ is embedded into a group (named $W_{n,k}$), 
and its generators and relations are explicitly written down. 
Using them, one can obtain the homomorphism $\rho$ to $\Z_2$.) 
For each $i=1,2,\dots,n$, 
we define a homomorphism $\phi_i:H_0(\G_{[k(i)]})\to H_0(\G_{[5]})$ 
by $\phi_i(\bar{1})=\bar{1}$. 
Applying Theorem \ref{embedding} to each $\phi_i$, 
we get an embedding $\pi:[[\G]]\to[[\H]]$. 
In the same way as the proof of Theorem \ref{SFT/thm} (3), 
one can conclude that $\rho\circ\pi\circ j$ is nonzero. 
Thus, $[[\G]]_\ab\cong\Z_2\oplus(\Z_g)^{n-1}$. 

When $k(i)$ is odd for all $i$ and $\#\{i\mid k(i)\in4\Z{+}3\}=1$, 
the same argument works by using \cite[Lemma 5.19 (2)]{M15}. 

(3). 
Almost the same argument as above works, 
by using \cite[Lemma 5.19 (3)]{M15}. 
But, the range of the homomorphism $\rho$ becomes $\Z_4$. 
Thus, the map $j:H_0(\G)\otimes\Z_2\to[[\G]]_\ab$ is injective 
but does not have a right inverse. 
Hence, we can conclude $[[\G]]_\ab\cong\Z_{2g}\oplus(\Z_g)^{n-2}$. 

(4). 
For simplicity, we assume $n=3$ and $k(1)=k(2)=k(3)=3$. 
It suffices to show that $j:H_0(\G)\otimes\Z_2\to[[\G]]_\ab$ is zero. 

Let $X_{[3]}=\{0,1,2\}^\N$ and 
let $(X_{[3]},\sigma_{[3]})$ be the full shift over three symbols. 
We define the clopen set $C_i\subset X_{[3]}$ by 
\[
C_i=\{(x_n)_{n\in\N}\in X_{[3]}\mid x_1=i\}. 
\]
Define a compact open $\G_{[3]}$-set $U_i\subset\G_{[3]}$ by 
\[
U_i=\{(x,1,y)\in\G_{[3]}\mid x\in C_i,\ \sigma_{[3]}(x)=y\}. 
\]
Let $t\in[[\G]]_\ab$ be the image of 
the generator of $H_0(\G)\otimes\Z_2\cong\Z_2$. 
We would like to show $t=0$. 
Define $\beta_{12}\in[[\G]]$ by 
\[
\beta_{12}(x,y,z)=
\theta(U_i\times U_i^{-1}\times X_{[3]})(x,y,z)\
\quad\text{when $y\in C_i$}
\]
for $(x,y,z)\in\G^{(0)}=X_{[3]}\times X_{[3]}\times X_{[3]}$. 
The homeomorphism $\beta_{12}$ is the so-called baker's map 
acting on the first and second coordinates of 
$X_{[3]}\times X_{[3]}\times X_{[3]}$, 
and its index $I(\beta_{12})$ is nonzero in $H_1(\G)\cong\Z_2\oplus\Z_2$. 
By \cite[Lemma 5.21 (4)]{M15}, 
one sees $2[\beta_{12}]=t$ in $[[\G]]_\ab$. 
We can define $\beta_{23}\in[[\G]]$ in the same way by 
\[
\beta_{23}(x,y,z)=
\theta(X_{[3]}\times U_i\times U_i^{-1})(x,y,z)\
\quad\text{when $z\in C_i$}. 
\]
Again one has $2[\beta_{23}]=t$. 
It is easy to see that 
$\beta_{12}\beta_{23}$ is equal to the baker's map 
acting on the first and third coordinates of 
$X_{[3]}\times X_{[3]}\times X_{[3]}$. 
Therefore, we get $2[\beta_{12}\beta_{23}]=t$. 
Consequently, we obtain $2t=t$, thus $t=0$. 
\end{proof}

Little is known about analytic properties of $[[\G]]$. 
For example, it is natural to ask 
if the topological full group $[[\G]]$ of 
$\G=\G_{A_1}\times\G_{A_2}\times\dots\times\G_{A_n}$ 
has the Haagerup property or not.

\end{document}